\newcommand{\ccc}{{\mathbf C}}
\newcommand{\nnn}{{\mathbf N}}
\newcommand{\zzz}{{\mathbf Z}}
\renewcommand{\ggg}{{\frak{g}}}
\newcommand{\hhh}{{\frak{h}}}
\newtheorem{thm}{Theorem}[section]
\newtheorem{prop}{Proposition}[section]
\newtheorem{lemma}{Lemma}[section]
\newtheorem{cor}{Corollary}[section]
\newtheorem{ex}{Example}[section]
\newtheorem{note}{Note}[section]
\numberwithin{equation}{section}
\begin{document}

\title{On the characters of a certain series of \\
N=4 superconformal modules}

\author{\footnote{12-4 Karato-Rokkoudai, Kita-ku, Kobe 651-1334, 
Japan, \qquad
wakimoto.minoru.314@m.kyushu-u.ac.jp, \hspace{5mm}
wakimoto@r6.dion.ne.jp 
}{ Minoru Wakimoto}}

\date{\empty}

\maketitle

\begin{center}
Abstract
\end{center}

In this paper we study the N=4 superconformal modules obtained from 
the quantum Hamiltonian reduction of principal admissible representations of
the affine Lie superalgebra $\widehat{A}(1,1)$, and show that there exists
a series of N=4 superconformal modules whose characters are modular
functions and written explicitly by the Mumford's theta functions.

\tableofcontents

\section{Introduction}
\label{sec:introduction}

For N=4 superconformal modules, properties of modified characters
are studied in \cite{KW2017b}. In the current paper we consider the 
\lq \lq honest" characters of N=4 superconformal modules, 
namely characters before modification.
The method in this paper is very simple as follows.

\begin{itemize}
\item As we see in \cite{KW2017b}, the formula for the characters 
of N=4 modules contains the differential of mock theta functions 
$\Phi^{[m,s]}$.

\item These differential disappear if we consider suitable sum 
of two irreducible N=4 modules.

\item There exist cases in which the one of two irreducible 
components vanishes. 
Then, in such cases, the character of the other irreducible component 
which survives is written only by $\Phi^{[m,s]}$'s without 
their differential.

\item Furthermore, in the case $m=1$, the function $\Phi^{[1,s]}$ 
$(s \in \zzz)$ can be written explicitly by the Mumford's 
theta functions $\vartheta_{ab}(\tau,z)$.
\end{itemize}

The $\lambda$-brackets of the generating fields of the N=4 
superconformal algebra obtained from the quantum Hamiltonian 
reduction of the affine Lie superalgebra $\widehat{A}(1,1)=
(\widehat{sl(2|2)/\ccc I})$ are obtained in \cite{KW2004}.

An irreducible highest weight N=4 superconformal module $(\pi, V)$ 
is determined by 3 parameters $(c_V, h_V, s_V)$, where $c_V$ 
is the central charge of the Virasoro field $L$ and $h_V$ 
(resp. $s_V$) is the eigenvalue of $L_0$ (resp. $J_0$) on the 
highest weight vector $v_0$ in $V$.
For $M \in \nnn$ we put 
%(label=n4:eqn:2022-1203a) %%
\begin{equation}
\begin{array}{lclcc}
I^{[M]} &:=& \big\{j \, \in \, \frac12 \zzz_{\rm odd} 
& ; &
-\frac{M-1}{2} \, \leq \, j \, \leq \, \frac{M}{2}\big\} 
\\[3mm]
I^{[M], R} &:=& \big\{j \, \in \, \zzz 
& ; & 
- \frac{M-1}{2} \, \leq \, j \, \leq \, \frac{M}{2}
\big\} 
\end{array}
\label{n4:eqn:2022-1203a}
\end{equation}
Our main result in this paper is the following: 

\begin{thm} \,\ 
\label{n4:thm:2022-1203a}
\begin{enumerate}
\item[{\rm 1)}] Let $M \in \nnn$ and $j \in I^{[M]}$, and 
$V^{[M,j]}$ be the N=4 module such that 
{\allowdisplaybreaks
\begin{eqnarray}
c_{V^{[M,j]}}&=& \frac{6(1-M)}{M} \,\ (=:c^{[M]})
\nonumber
\\[0mm]
h_{V^{[M,j]}} &=& \frac{j^2}{M}+\frac{1}{4M}-\frac12 \,\ (=: h^{[M,j]}), 
\quad 
s_{V^{[M,j]}} = \frac{2j}{M}-1 \,\ (=:s^{[M,j]})
\label{n4:eqn:2023-106a}
\end{eqnarray}}
Then the character ${\rm ch}^{(+)}$ and super-character ${\rm ch}^{(-)}$ 
of $V^{[M,j]}$ are given by the following
formulas:
{\allowdisplaybreaks 
\begin{eqnarray*}
& & \hspace{-7mm}
{\rm ch}^{(+)}_{V^{[M,j]}}(\tau,z)
\,\ = \,\ 
- \, {\rm sgn}(j) \, q^{\frac{1}{M}j^2} e^{\frac{4\pi i}{M}jz}
\\[2mm]
& &
\times \,\ \frac{
\vartheta_{00}(M\tau, \, z+j\tau)
\vartheta_{01}(M\tau, \, z+j\tau)
\vartheta_{11}(M\tau, \, z+j\tau)
}{\vartheta_{10}(M\tau, \, z+j\tau)} \cdot
\frac{\vartheta_{00}(\tau,z)}{
\vartheta_{01}(\tau,z)\vartheta_{10}(\tau,z)\vartheta_{11}(\tau,z)}
% ch^{(-)}
\\[3mm]
& & \hspace{-7mm}
{\rm ch}^{(-)}_{V^{[M,j]}}(\tau,z)
\,\ = \,\ {\rm sgn}(j) \, 
q^{\frac{1}{M}j^2} e^{\frac{4\pi i}{M}jz}
\\[2mm]
& &
\times \,\ \dfrac{
\vartheta_{00}(M\tau, \, z+j\tau)
\vartheta_{01}(M\tau, \, z+j\tau)
\vartheta_{10}(M\tau, \, z+j\tau)
}{\vartheta_{11}(M\tau, \, z+j\tau)} \cdot
\frac{\vartheta_{01}(\tau,z)}{
\vartheta_{00}(\tau,z)\vartheta_{10}(\tau,z)\vartheta_{11}(\tau,z)}
\end{eqnarray*}}
% Ramond
\item[{\rm 2)}] Let $M \in \nnn$ and $j \in I^{[M], R}$, and 
$V^{[M,j]R}$ be the Ramond twisted N=4 module such that 
{\allowdisplaybreaks
\begin{eqnarray}
c_{V^{[M,j]R}} &=& \frac{6(1-M)}{M} \,\ (=: c^{[M]R})
\nonumber
\\[0mm] 
h_{V^{[M,j]R}} &=& \frac{j^2}{M}+\frac{1}{4M}-\frac14 \,\ 
(=:h^{[M,j]R}), \quad 
s_{V^{[M,j]R}} = \frac{2j}{M} \,\ (=: s^{[M,j]R})
\label{n4:eqn:2023-106b}
\end{eqnarray}}
Then the character ${\rm ch}^{(+)}$ and super-character ${\rm ch}^{(-)}$ 
of $V^{[M,j]R}$ are given by the following formulas:
{\allowdisplaybreaks 
\begin{eqnarray*}
& & \hspace{-7mm}
{\rm ch}^{(+)}_{V^{[M,j]R}}(\tau,z) \,\ = \,\ 
- \, {\rm sgn}(j) \, q^{\frac{1}{M}j^2} e^{\frac{4\pi i}{M}jz}
\\[2mm]
& &
\times \,\ \frac{
\vartheta_{00}(M\tau, \, z+j\tau)
\vartheta_{01}(M\tau, \, z+j\tau)
\vartheta_{11}(M\tau, \, z+j\tau)
}{\vartheta_{10}(M\tau, \, z+j\tau)}
\cdot \frac{\vartheta_{10}(\tau,z)}{
\vartheta_{00}(\tau,z)\vartheta_{01}(\tau,z)\vartheta_{11}(\tau,z)}
\\[3mm]
& & \hspace{-7mm}
{\rm ch}^{(-)}_{V^{[M,j]R}}(\tau,z) \,\ = \,\ 
- \, {\rm sgn}(j) \, 
q^{\frac{1}{M}j^2} e^{\frac{4\pi i}{M}jz}
\\[2mm]
& &
\times \,\ \frac{
\vartheta_{00}(M\tau, \, z+j\tau)
\vartheta_{01}(M\tau, \, z+j\tau)
\vartheta_{10}(M\tau, \, z+j\tau)
}{\vartheta_{11}(M\tau, \, z+j\tau)}
\cdot \frac{\vartheta_{11}(\tau,z)}{
\vartheta_{00}(\tau,z)\vartheta_{01}(\tau,z)\vartheta_{10}(\tau,z)}
\end{eqnarray*}}
\end{enumerate}
where \qquad $
{\rm sgn}(j) \,\ := \,\ \left\{
\begin{array}{rcl}
1 & & {\rm if} \,\ j \, > \, 0 \\[1mm]
-1 & & {\rm if} \,\ j \, \leq \, 0
\end{array}\right. $
\end{thm}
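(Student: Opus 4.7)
The plan is to derive these characters by starting from the general character formula of \cite{KW2017b} for N=4 superconformal modules obtained via quantum Hamiltonian reduction of principal admissible $\widehat{A}(1,1)$-modules, which expresses such characters as linear combinations of the mock theta functions $\Phi^{[m,s]}$ together with their partial derivatives. The key observation, as laid out in the four bullet points of the introduction, is that the derivative contributions cancel when one adds a particular pair of irreducible N=4 components that arise together from a single admissible $\widehat{A}(1,1)$-module, and that for the parameters $(M,j)$ listed in the theorem exactly one member of this pair is nonzero.

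First I would write down the reduced character of each relevant principal admissible $\widehat{A}(1,1)$-module in the schematic form $\Phi^{[M,s_1]}+\Phi^{[M,s_2]}+(\text{derivative piece})$, matching the parameters via the dictionary between admissible weights and the triples $(c,h,s)$ recorded in (\ref{n4:eqn:2023-106a}) and (\ref{n4:eqn:2023-106b}); here $M$ plays the role of the denominator in the admissible level normalization. Second, I would identify the boundary ranges $j\in I^{[M]}$ and $j\in I^{[M],R}$ as precisely those for which one of the two paired N=4 irreducibles degenerates to the zero module, so that the surviving character equals the whole sum with the derivative part already killed. The Neveu--Schwarz versus Ramond dichotomy is then handled uniformly by the half-period shift $z\mapsto z+\tfrac12$, which permutes the four Mumford theta-constants in the prescribed way and matches the half-integer versus integer indexing of $I^{[M]}$ and $I^{[M],R}$.

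Having reduced to a pure sum of $\Phi^{[M,s]}$ terms, I would invoke the known $m=1$ identity expressing $\Phi^{[1,s]}$ as a ratio of Mumford theta functions. The higher $M$ case is then obtained from this by the substitution $\tau\mapsto M\tau$ together with the quadratic shift of the elliptic variable by $j\tau$; this shift produces the prefactor $q^{j^2/M}e^{4\pi i j z/M}$ (a standard theta cocycle) and the numerator $\vartheta_{00}\vartheta_{01}\vartheta_{11}/\vartheta_{10}$ (or its Ramond analog) evaluated at $(M\tau,\,z+j\tau)$. The remaining $\vartheta(\tau,z)$ factors reproduce the Weyl--Kac-type denominator for the N=4 superconformal algebra, namely the free-fermion and ghost contributions from the Hamiltonian reduction.

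The main obstacle I expect is the careful bookkeeping of signs and the precise identification of which irreducible vanishes: one must verify that ${\rm sgn}(j)$ and the selection of the surviving component match consistently across the entire range, including the boundary values $j=-\tfrac{M-1}{2}$ and $j=\tfrac{M}{2}$. A secondary but still delicate point is checking that the derivative terms in the \cite{KW2017b} formula really do cancel when the two N=4 components are assembled; this reduces to a linear identity among the $\partial\Phi^{[M,s]}$'s which can be verified directly on the level of their theta-series expansions, but one must be attentive to the modular weight and the shift arguments to keep the cancellation exact.
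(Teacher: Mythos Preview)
Your proposal follows the same strategic outline as the paper --- the four bullet points of the introduction --- and would lead to a correct proof. Two tactical points are worth noting where the paper's execution is cleaner than what you sketch.

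First, the ``derivative cancellation'' is not handled in the paper via linear identities among the $\partial\Phi^{[m,s]}$'s. Instead, the integrable $\widehat{A}(1,1)$-characters are written directly in terms of the auxiliary functions $\Phi^{(A(1|1))[m,s]}$ of (\ref{eqn:2022-1015a1})--(\ref{eqn:2022-1015a3}), which have \emph{squared} denominators $(1-e^{2\pi iz_1}q^j)^{-2}$; the key step is then the elementary identity of Note~\ref{n4:note:2022-1223a},
\[
\Phi^{(A(1|1))[m,s]}-\Phi^{(A(1|1))[m,s+1]}=\Phi^{[m,s]},
\]
which is nothing more than $\frac{1}{(1-x)^2}-\frac{x}{(1-x)^2}=\frac{1}{1-x}$. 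Because the two paired irreducibles $H(\Lambda^{[K(m),m_2]})$ and $H(\Lambda^{[K(m),m_2+1]})$ carry opposite signs $(-1)^{m_2+1}$ in the numerator formula, their sum produces exactly this difference. No derivative identities need to be verified.

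Second, the Neveu--Schwarz/Ramond dichotomy is not the half-period shift $z\mapsto z+\tfrac12$ (that shift exchanges character and super-character within a fixed sector, cf.\ (\ref{n4:eqn:2023-111a})); rather, the Ramond sector arises from the twist $w_0=r_{\alpha_2}t_{\frac12\alpha_2}$ acting on the Cartan coordinates as in (\ref{eqn:Vol.324p.109}), which shifts the $\tau$-argument of $\Phi^{(A(1|1))}$ by $\pm\tfrac12\tau$ and accounts for the change from half-integer $j\in I^{[M]}$ to integer $j\in I^{[M],R}$. The vanishing of the partner module is pinned down precisely in Lemma~\ref{lemma:2022-1221a} as integrability with respect to $\alpha_0=\delta-\theta$, which forces $2k_1+k_2=M-1$ and $m_2=m$.
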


%\medskip

This paper is organized as follows.
In section \ref{sec:preliminaries}, we recall mock theta functions
$\Psi^{[M,m,s;\varepsilon]}_{j,k;\varepsilon'}$
and their properties from \cite{KW2017b}.
In sections \ref{sec:A(11):integrable} and \ref{sec:A(11):admissible},
we compute the characters of integrable and principal 
admissible $\widehat{A}(1,1)$-modules and, 
in section \ref{sec:quantum:character}, deduce the formulas 
for the characters of N=4 modules obtained from the quantum 
Hamiltonian reduction of $\widehat{A}(1,1)$-modules.

In section \ref{sec:h-lambda}, we deduce formulas for 
$h_{\lambda}$, $s_{\lambda}$, $h^{\rm tw}_{\lambda}$ 
and $s^{\rm tw}_{\lambda}$, which are important quantities 
characterizing non-twisted and twisted N=4 modules. 

In section \ref{sec:non-irred}, we show that 
the character of suitable sum of two irreducible N=4 modules 
can be written by  mock theta functions $\Phi^{[m,s]}$ without 
their differentials.
In section \ref{sec:vanishing} we study conditions for vanishing 
of quantum Hamiltonian reduction which, together with the results 
in section \ref{sec:non-irred}, lead us to section \ref{sec:nice}. \,\
In section \ref{sec:nice}, we complete the proof of 
Theorem \ref{n4:thm:2022-1203a}.

In section \ref{sec:examples}, we consider the cases $M=1$ and $M=2$.
Since the case $M=1$ is the trivial N=4 representation,
the case $M=2$ gives the simplest non-trivial N=4 superconformal 
modules. Applying Theorem \ref{n4:thm:2022-1203a} to the case $M=2$,
we obtain the characters of N=4 modules with central charge $=-3$.
Finally in section \ref{sec:SL(2Z)-invariance}, we show that 
the non-twisted and twisted (super)characters studied in 
section \ref{sec:non-irred}  span $SL_2(\mathbf{Z})$-invariant spaces
in the case $m=1$.

\medskip

In this paper, we follow notations and definitions from \cite{KRW}, 
\cite{W2022a}, \cite{W2022b}, \cite{W2022d} and \cite{W2022e}.

\section{Preliminaries}
\label{sec:preliminaries}
%(line=303)  %%

%\medskip %%

Using the mock theta function 
$\Phi^{[m,s]}(\tau, z_1,z_2,t)$ and its Zwegers' modification
$\widetilde{\Phi}^{[m,s]}(\tau, z_1,z_2,t)$
defined in \cite{W2022a}, we define the functions 
$\Psi^{[M,m;s; \varepsilon]}_{j,k;\varepsilon'}(\tau, z_1,z_2,t)$
and 
$\widetilde{\Psi}^{[M,m;s; \varepsilon]}_{j,k;\varepsilon'}(\tau, z_1,z_2,t)$
by the following formulas:
%(label=eqn:2022-1017a2) %%
%(label=eqn:2022-1017a3) %%
%\vspace{-4mm} %%
\begin{subequations}
{\allowdisplaybreaks
\begin{eqnarray}
& &\hspace{-15mm}
\Psi^{[M,m;s; \varepsilon]}_{j,k;\varepsilon'}(\tau, z_1,z_2,t)
\nonumber
\\[3mm]
& & \hspace{-10mm}
:= \,\ 
q^{\frac{m}{M}jk} \, 
e^{\frac{2\pi im}{M}(kz_1+jz_2)} \, 
\Phi^{[m;s]}\Big(
M\tau, \, z_1+j\tau+\varepsilon, \, z_2+k\tau-\varepsilon, \, \frac{t}{M}
\Big)
\label{eqn:2022-1017a2}
\\[3mm]
& & \hspace{-15mm}
\widetilde{\Psi}^{[M,m;s; \varepsilon]}_{j,k;\varepsilon'}(\tau, z_1,z_2,t)
\nonumber
\\[3mm]
& & \hspace{-10mm}
:= \,\ 
q^{\frac{m}{M}jk} \, 
e^{\frac{2\pi im}{M}(kz_1+jz_2)} \, 
\widetilde{\Phi}^{[m;s]}\Big(
M\tau, \, z_1+j\tau+\varepsilon, \, z_2+k\tau-\varepsilon, \, \frac{t}{M}
\Big)
\label{eqn:2022-1017a3}
\end{eqnarray}}
\end{subequations}
where $m \in \frac12 \nnn$ and $M$ is a positive odd integer coprime to $2m$,
or $m=1$ and $M \in \nnn$, and $s \in \frac12 \zzz$, and 
$\varepsilon, \, \varepsilon' \in \{0, \frac12\}$ and $j,k \in \varepsilon'+\zzz$.
Since 
%(label=n4:eqn:2022-1210a) %%
\begin{equation}
\widetilde{\Phi}^{[1,s]}(\tau,z_1,z_2,t) \, = \, 
\Phi^{[1,s]}(\tau,z_1,z_2,t) \, = \, 
-i \, e^{-2\pi it} \, 
\frac{\eta(\tau)^3 \, \vartheta_{11}(\tau, z_1+z_2)}{
\vartheta_{11}(\tau, z_1) \, \vartheta_{11}(\tau, z_2)}
\quad (s \in \zzz)
\label{n4:eqn:2022-1210a}
\end{equation}
by Lemma 2.7 in \cite{W2022a}, one has
%(label=n4:eqn:2022-1208a) %%
{\allowdisplaybreaks
\begin{eqnarray}
& & \hspace{-10mm}
\widetilde{\Psi}^{[M,1;s; \varepsilon]}_{j,k;\varepsilon'}(\tau, z_1, z_2, t) 
\,\ = \,\ 
\Psi^{[M,1;s; \varepsilon]}_{j,k;\varepsilon'}(\tau, z_1, z_2, t)
\nonumber
\\[1.5mm]
&=&
- \, i \, e^{-\frac{2\pi it}{M}} \, 
q^{\frac{jk}{M}} \, 
e^{\frac{2\pi i}{M}(kz_1+jz_2)} \,\ 
\frac{\eta(M\tau)^3 \, 
\vartheta_{11}\big(M\tau, \, z_1+z_2+(j+k)\tau\big)}{
\vartheta_{11}\big(M\tau, \, z_1+j\tau+\varepsilon\big) \, 
\vartheta_{11}\big(M\tau, \, z_2+k\tau-\varepsilon\big)}
\label{n4:eqn:2022-1208a}
\end{eqnarray}}
for $s \in \zzz$.  
Then by this equation \eqref{n4:eqn:2022-1208a}, the formulas for 
$\widetilde{\Psi}^{[M,m;s; \varepsilon]}_{j,k;\varepsilon'}
(\tau, z_1, z_2, t)$ proved in \cite{KW2017b} hold for 
$\Psi^{[M,1;s; \varepsilon]}_{j,k;\varepsilon'}(\tau, z_1, z_2, t)$
in the case $s \in \zzz$. 
Then by (1.17) and (1.18) in \cite{KW2017b}, we have the following:

\medskip %%

%(line=401) %%
%(label=n4:lemma:2022-1207a) %%
\begin{lemma} \,\ 
\label{n4:lemma:2022-1207a}
For $M \in \nnn$ and $\varepsilon, \, \varepsilon' \in \{0, \frac12\}$, 
the following formulas hold:
\begin{enumerate}
\item[{\rm 1)}] \,\ $\Psi^{[M,1;0; \varepsilon]}_{j,k;\varepsilon'}
\Big(-\dfrac{1}{\tau}, \dfrac{z_1}{\tau}, \dfrac{z_2}{\tau}, t\Big)
\, = \, 
\dfrac{\tau}{M} \, e^{\frac{2\pi i}{M\tau}z_1z_2}
\hspace{-7mm}
\sum\limits_{\substack{\\[0.5mm] (a,b) \, \in \, 
(\varepsilon+\zzz/M\zzz)^2}} \hspace{-7mm}
e^{-\frac{2\pi i}{M}(ak+bj)} \, 
\Psi^{[M,1;0; \varepsilon']}_{a,b;\varepsilon}(\tau, z_1, z_2,t)$
\item[{\rm 2)}] \,\ $
\Psi^{[M,1;0; \varepsilon]}_{j,k; \varepsilon'}(\tau+1, z_1, z_2,t)
\,\ = \,\ 
e^{\frac{2\pi i}{M}jk} \, 
\Psi^{[M,1;0; \varepsilon+\varepsilon']}_{j,k; \varepsilon'}(\tau, z_1, z_2,t)$
\end{enumerate}
\end{lemma}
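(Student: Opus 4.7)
The strategy is to deduce both identities as specializations of the general modular transformation formulas (1.17) and (1.18) of \cite{KW2017b}, which govern the Zwegers-modified functions $\widetilde{\Psi}^{[M,m;s;\varepsilon]}_{j,k;\varepsilon'}$, and then to exploit the fact that for $m=1$ and $s\in\zzz$ the Zwegers modification is trivial, so those formulas descend verbatim to the unmodified $\Psi$.

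The first step is to record the key specialization. Since $s=0\in\zzz$, equation \eqref{n4:eqn:2022-1210a} gives $\widetilde{\Phi}^{[1,0]}=\Phi^{[1,0]}$, and comparing the definitions \eqref{eqn:2022-1017a2} and \eqref{eqn:2022-1017a3} upgrades this at once to
\[
\widetilde{\Psi}^{[M,1;0;\varepsilon]}_{j,k;\varepsilon'}(\tau,z_1,z_2,t)
\,=\,
\Psi^{[M,1;0;\varepsilon]}_{j,k;\varepsilon'}(\tau,z_1,z_2,t),
\]
which is the central reduction that turns the mock-modular behavior of $\widetilde{\Psi}$ into genuine modularity of $\Psi$.

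Next, I would invoke (1.17) and (1.18) of \cite{KW2017b}. These are stated for a range of parameters that, by the conventions fixed after \eqref{eqn:2022-1017a3}, includes the present case $m=1$, $M\in\nnn$ without any coprimality hypothesis on $M$. They assert exactly the $S$- and $T$-transformations written in parts 1) and 2) of the lemma, but for $\widetilde{\Psi}$ in place of $\Psi$. Specializing $m=1$, $s=0$ and substituting the equality from the previous paragraph converts them into the stated formulas.

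The only mildly delicate point is bookkeeping: one must check that the summation set $(a,b)\in(\varepsilon+\zzz/M\zzz)^2$, the phases $e^{-(2\pi i/M)(ak+bj)}$ and $e^{(2\pi i/M)jk}$, and the index shift $\varepsilon\mapsto\varepsilon+\varepsilon'$ in part 2) transcribe term-for-term from the general $\widetilde{\Psi}$-version in \cite{KW2017b} once $s$ is set to $0$. Since no $s$-dependent phase factors appear, the transcription is mechanical, and I do not expect a genuine obstacle: the content of this lemma is essentially a direct citation once $\widetilde{\Psi}=\Psi$ has been established at $s=0$, $m=1$.
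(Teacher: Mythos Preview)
Your proposal is correct and is essentially identical to the paper's own argument: the paper states the lemma as an immediate consequence of (1.17) and (1.18) in \cite{KW2017b} together with the identification $\widetilde{\Psi}^{[M,1;s;\varepsilon]}_{j,k;\varepsilon'}=\Psi^{[M,1;s;\varepsilon]}_{j,k;\varepsilon'}$ for $s\in\zzz$ from \eqref{n4:eqn:2022-1208a}, giving no further details.
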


\medskip

%(line=388) %%
%(labellemma:2022-1021a) %%
\begin{lemma} \,\ 
\label{lemma:2022-1021a}
For $M \in \nnn$ and $\varepsilon, \, \varepsilon' \in \{0, \frac12\}$, 
the following formulas hold:
\begin{enumerate}
\item[{\rm 1)}] \,\
$\Psi^{[M,1;0; \varepsilon]}_{j+aM,k+bM;\varepsilon'}(\tau, z_1, z_2, 0)
\, = \,\ 
e^{2\pi i(a-b)\varepsilon} \, 
\Psi^{[M,1;0; \varepsilon]}_{j,k;\varepsilon'}(\tau, z_1, z_2, 0)$
\quad for \,\ ${}^{\forall}a, \, {}^{\forall}b \, \in \, \zzz$ 

\item[{\rm 2)}] \,\ 
$\Psi^{[M,1;0; \varepsilon]}_{j,k;\varepsilon'}(\tau, -z_1, -z_2, 0)
\,\ = \,\ - \, 
\Psi^{[M,1;0; \varepsilon]}_{-k,-j;\varepsilon'}(\tau, z_2, z_1, 0)$

\vspace{1mm}

\item[{\rm 3)}] \,\
$\Psi^{[M,1;0; \varepsilon]}_{j,k;\varepsilon'}(\tau, z_2, z_1, 0)
\hspace{8mm} = \hspace{5.5mm} 
\Psi^{[M,1;0; \varepsilon]}_{k,j;\varepsilon'}(\tau, z_1, z_2, 0)$

\vspace{1mm}

\item[{\rm 4)}] \,\
$\Psi^{[M,1;0; \varepsilon]}_{j,k;\varepsilon'}(\tau, -z_1, -z_2, 0)
\,\ = \,\ - \, 
\Psi^{[M,1;0; \varepsilon]}_{-j,-k;\varepsilon'}(\tau, z_1, z_2, 0)$
\end{enumerate}
\end{lemma}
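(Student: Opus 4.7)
The plan is to derive all four identities directly from the explicit closed form \eqref{n4:eqn:2022-1208a}, specialized to $t=0$, together with the standard quasi-periodicity and parity of the Jacobi theta function $\vartheta_{11}$. Write
\[
P \,:=\, q^{\frac{jk}{M}} e^{\frac{2\pi i}{M}(kz_1+jz_2)},
\qquad
\Theta \,:=\, \frac{\eta(M\tau)^3\,\vartheta_{11}\!\bigl(M\tau,\,z_1+z_2+(j+k)\tau\bigr)}{\vartheta_{11}\!\bigl(M\tau,\,z_1+j\tau+\varepsilon\bigr)\,\vartheta_{11}\!\bigl(M\tau,\,z_2+k\tau-\varepsilon\bigr)},
\]
so that $\Psi^{[M,1;0;\varepsilon]}_{j,k;\varepsilon'}(\tau,z_1,z_2,0)=-i\,P\,\Theta$. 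The facts I will use about $\vartheta_{11}$ are $\vartheta_{11}(\tau,z+1)=-\vartheta_{11}(\tau,z)$, $\vartheta_{11}(\tau,-z)=-\vartheta_{11}(\tau,z)$, and the quasi-periodicity $\vartheta_{11}(\tau,z+n\tau)=(-1)^n e^{-\pi i n^2\tau}e^{-2\pi i n z}\vartheta_{11}(\tau,z)$. Note that the parameter $\varepsilon'$ plays no active role in these manipulations.

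For part 1), I shift $j\mapsto j+aM$ and $k\mapsto k+bM$ and track the accumulated factors. The prefactor $P$ acquires $q^{\,ak+bj+Mab}\,e^{2\pi i(bz_1+az_2)}$. Applying the quasi-periodicity to each of the three theta functions in $\Theta$ (with period $M\tau$ in place of $\tau$) produces a net factor $q^{-Mab-(ak+bj)}\,e^{-2\pi i(bz_1+az_2)}\,e^{2\pi i(a-b)\varepsilon}$; the signs $(-1)^{a+b}$ from numerator and denominator cancel, and the quadratic pieces combine via $(a+b)^2-a^2-b^2=2ab$. Multiplying the two factors leaves exactly $e^{2\pi i(a-b)\varepsilon}$, proving 1).

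For part 2), I substitute $(z_1,z_2)\mapsto(-z_1,-z_2)$ on the left and simultaneously compute the right-hand side by substituting $(j,k)\mapsto(-k,-j)$ and $(z_1,z_2)\mapsto(z_2,z_1)$. The exponentials $P$ match (both produce $q^{jk/M}e^{-2\pi i(kz_1+jz_2)/M}$). In $\Theta$, three applications of $\vartheta_{11}(\tau,-z)=-\vartheta_{11}(\tau,z)$ turn the numerator theta into $-\vartheta_{11}(M\tau,z_1+z_2-(j+k)\tau)$ and each denominator theta into its negative, so two minus signs in the denominator cancel and one minus sign in the numerator combines with the explicit sign $-$ on the right-hand side. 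A direct matching of the three arguments then finishes part 2). Part 3) is similar but simpler: swapping $z_1\leftrightarrow z_2$ on the left matches swapping $j\leftrightarrow k$ on the right for every factor except possibly the two denominator thetas, whose arguments differ by $\pm 2\varepsilon$; when $\varepsilon=0$ this is trivial, and when $\varepsilon=\tfrac12$ the two applications of $\vartheta_{11}(\tau,z+1)=-\vartheta_{11}(\tau,z)$ contribute signs that cancel. Part 4) then follows immediately by composing 2) and 3).

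The only step requiring genuine care is part 1), because three separate quasi-periodicity contributions (one in the numerator, two in the denominator) must combine precisely with the shift of the prefactor $P$ and still leave a nontrivial residue $e^{2\pi i(a-b)\varepsilon}$. I expect the bookkeeping for the quadratic exponent $q^{-Mab}$ against the change in $q^{jk/M}$ to be the most error-prone piece; everything else is a straightforward appeal to the parity and the identity $\vartheta_{11}(\tau,z+1)=-\vartheta_{11}(\tau,z)$.
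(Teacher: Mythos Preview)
Your proof is correct and follows precisely the approach the paper implicitly intends: the paper states this lemma without a detailed proof, relying on the explicit closed form \eqref{n4:eqn:2022-1208a} together with the standard transformation properties of $\vartheta_{11}$, which is exactly what you carry out. Your bookkeeping in part 1) (the cancellation of the $q^{Mab}$ and $q^{ak+bj}$ pieces against the quasi-periodicity contributions, leaving only $e^{2\pi i(a-b)\varepsilon}$) is accurate, and parts 2)--4) are handled correctly via the parity and integer-shift identities of $\vartheta_{11}$.
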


\medskip

Next, in order to describe the characters of integrable 
$\widehat{A}(1,1)$-modules, we consider the following functions 
defined for $m \in \frac12\nnn$ and $s \in \frac12 \zzz$:
%(label=eqn:2022-1015a1)
%(label=eqn:2022-1015a2)
%(label=eqn:2022-1015a3)
\begin{subequations}
{\allowdisplaybreaks
\begin{eqnarray}
\Phi^{(A(1|1))[m,s]}_1(\tau, z_1,z_2,t)
&:=&
e^{-2\pi imt} \sum_{j \in \zzz} \, \frac{
e^{2\pi imj(z_1+z_2)+2\pi is z_1} \, q^{mj^2+sj}
}{(1-e^{2\pi iz_1} q^j)^2}
\label{eqn:2022-1015a1}
\\[2mm]
\Phi^{(A(1|1))[m,s]}_2(\tau, z_1,z_2,t)
&:=&
e^{-2\pi imt} \sum_{j \in \zzz} \, \frac{
e^{-2\pi imj(z_1+z_2)-2\pi is z_2} \, q^{mj^2+sj}
}{(1-e^{-2\pi iz_2} q^j)^2}
\label{eqn:2022-1015a2}
\\[3mm]
\Phi^{(A(1|1))[m,s]}(\tau, z_1,z_2,t)
&:=& \big[\Phi^{(A(1|1))[m,s]}_1
\, - \, \Phi^{(A(1|1))[m,s]}_2\big] \, (\tau, z_1,z_2,t)
\label{eqn:2022-1015a3}
\end{eqnarray}}
\end{subequations}

The following very easy formula plays an important role in our arguments:

\medskip

%(line=384) %%
%(label=n4:note:2022-1223a) %%
%(label=n4:eqn:2022-1223a) %%
\begin{note}
\label{n4:note:2022-1223a}
Let $m \in \frac12\nnn$ and $s \in \frac12 \zzz$. Then 
\begin{equation}
\Phi^{(A(1|1))[m,s]}(\tau, z_1,z_2,t)
-\Phi^{(A(1|1))[m,s+1]}(\tau, z_1,z_2,t)
\,\ = \,\ 
\Phi^{[m,s]}(\tau,z_1,z_2,t)
\label{n4:eqn:2022-1223a}
\end{equation}
\end{note}

\begin{proof} \,\ First we compute
{\allowdisplaybreaks
\begin{eqnarray*}
& & \hspace{-5mm}
\Phi^{(A(1|1))[m,s]}_1(\tau, z_1,z_2,t)
-\Phi^{(A(1|1))[m,s+1]}_1(\tau, z_1,z_2,t)
\\[2mm]
&=& 
e^{-2\pi imt} \bigg\{
\sum_{j \, \in \, \zzz} \frac{
e^{2\pi i\{mj(z_1+z_2)+sz_1\}} q^{mj^2+sj}
}{(1- e^{2\pi iz_1}q^j)^2}
- 
\sum_{j \, \in \, \zzz} 
\underbrace{\frac{e^{2\pi i\{mj(z_1+z_2)+(s+1)z_1\}} 
q^{mj^2+(s+1)j}
}{(1- e^{2\pi iz_1}q^j)^2}}_{\substack{|| \\[0mm] 
{\displaystyle \hspace{-3mm}
\frac{ e^{2\pi iz_1}q^j \cdot 
e^{2\pi i\{mj(z_1+z_2)+sz_1\}} q^{mj^2+sj}
}{(1- e^{2\pi iz_1}q^j)^2}
}}} 
\bigg\} 
\\[0mm]
&=&
e^{-2\pi imt} \, 
\sum_{j \, \in \, \zzz} \frac{
e^{2\pi i\{mj(z_1+z_2)+sz_1\}} \, q^{mj^2+sj}
}{1- e^{2\pi iz_1}q^j}
\,\ = \,\ 
\Phi^{[m, s]}_1 (\tau, z_1, z_2 , t)
\end{eqnarray*}}
and similarly
$$
\Phi^{(A(1|1))[m,s]}_2(\tau, z_1,z_2,t)
-\Phi^{(A(1|1))[m,s+1]}_2(\tau, z_1,z_2,t)
\,\ = \,\ 
\Phi^{[m, s]}_2 (\tau, z_1, z_2 , t)
$$
Thus we obtain the formula \eqref{n4:eqn:2022-1223a},
proving Note \ref{n4:note:2022-1223a}.
\end{proof}

\section{Integrable $\widehat{A}(1,1)$-modules and their characters}
\label{sec:A(11):integrable}
%(label=sec:A(11):integrable)
%(line=448)

We consider the Dynkin diagram  of the affine Lie superalgebra
$\widehat{A}(1,1) = \widehat{(sl(2|2)/\ccc I)}$ \\
%\,\ :$ 
\setlength{\unitlength}{1mm}
\begin{picture}(32,15)
\put(5,4){\circle{3}}
\put(14,4){\circle{3}}
\put(23,4){\circle{3}}
\put(5,4){\makebox(0,0){$\times$}}
%\put(14,4){\makebox(0,0){$\times$}}
\put(23,4){\makebox(0,0){$\times$}}
\put(5,0){\makebox(0,0){$\alpha_1$}}
\put(14,0){\makebox(0,0){$\alpha_2$}}
\put(23,0){\makebox(0,0){$\alpha_3$}}
\put(6.5,4){\line(1,0){6}}
\put(15.5,4){\line(1,0){6}}
\put(14,10){\circle{3}}
%\put(14,10){\makebox(0,0){$\times$}}
\put(18,11){\makebox(0,0){$\alpha_0$}}
\put(6,5){\line(3,2){6.5}}
\put(22,5){\line(-3,2){6.5}}
\put(9.5,1.5){\makebox(0,0){$1$}}
\put(18.5,1.5){\makebox(0,0){$1$}}
\put(6,9){\makebox(0,0){$-1$}}
\put(21,8.5){\makebox(0,0){$-1$}}
\end{picture}
with the inner product $( \,\ | \,\ )$ such that \\
$\Big((\alpha_i|\alpha_j)\Big)_{i,j=0,1,2,3} = 
\left(
\begin{array}{rrrr}
2 & -1 & 0 & -1 \\[0mm]
-1 & 0 & 1 & 0 \\[0mm]
0 & 1 & -2 & 1 \\[0mm]
-1 & 0 & 1 & 0
\end{array} \right)$ . Then the dual Coxeter number 
of $\widehat{A}(1,1)$ is $h^{\vee}=0$. Let $\hhh$ 
(resp. $\overline{\hhh}$) be the Cartan subalgebra of $\widehat{A}(1,1)$ 
(resp. $A(1,1)$) and $\Lambda_0$ be the element in $\hhh^{\ast}$ 
satisfying the conditions 
$(\Lambda_0|\alpha_j) = \delta_{j,0}$ and $(\Lambda_0|\Lambda_0)=0$.
Let $\delta=\sum_{i=0}^3\alpha_i$ be the primitive imaginary root
and $\rho= -\frac12 (\alpha_1+\alpha_3)$ be the Weyl vector.

We put
\begin{equation}
\begin{array}{ccl}
K(m) &:=& -m \\[2mm]
\Lambda^{[K(m),m_2]} &:=& 
K(m)\Lambda_0 \, - \, \dfrac{m_2}{2} \, (\alpha_1+\alpha_3)
\,\ = \,\ 
- \, m \Lambda_0 \, - \, \dfrac{m_2}{2} \, (\alpha_1+\alpha_3)
\end{array}
\label{n4:eqn:2022-1203b}
\end{equation}
Note that the weight $\Lambda^{[K(m),m_2]}$ is atypical 
with respect to $\alpha_1$ and $\alpha_3$, namely \\
$(\Lambda^{[K(m),m_2]}+\rho | \, \alpha_i) =0$ \,\ $(i=1,3)$.

\medskip

%(line=502) %%
%(label=n4:lemma:2022-1206a) %%
\begin{lemma} \,\ 
\label{n4:lemma:2022-1206a}
The weight $\Lambda^{[K(m),m_2]}$ is integrable with respect to 
$\alpha_2$ and $\delta-\alpha_2$ if and only if $m$ and 
$m_2$ are non-negative integers satisfying $m_2 \leq m$.
\end{lemma}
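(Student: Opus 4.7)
The plan is to reduce each integrability condition to an explicit numerical inequality on $m$ and $m_2$ by applying the standard criterion that, for an even real root $\alpha$, a highest weight $\Lambda$ is integrable with respect to $\alpha$ if and only if the coroot pairing $\langle \Lambda, \alpha^\vee\rangle = 2(\Lambda|\alpha)/(\alpha|\alpha)$ is a non-negative integer. Both $\alpha_2$ and $\delta - \alpha_2$ are even real roots of squared length $-2$, so the criterion applies in both cases.

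For $\alpha_2$ the computation is immediate from the given Cartan matrix: $(\Lambda_0|\alpha_2) = 0$ together with $(\alpha_1|\alpha_2) = (\alpha_3|\alpha_2) = 1$ give $(\Lambda^{[K(m),m_2]}|\alpha_2) = -m_2$, hence $\langle \Lambda^{[K(m),m_2]}, \alpha_2^\vee\rangle = m_2$. So integrability with respect to $\alpha_2$ is equivalent to $m_2 \in \zzz_{\geq 0}$.

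For $\delta - \alpha_2$ I would first observe that every row of the Cartan matrix sums to zero; consequently $(\delta|\alpha_i) = 0$ for all $i$, so $\delta$ is isotropic and orthogonal to every simple root. This gives $(\delta - \alpha_2|\delta - \alpha_2) = (\alpha_2|\alpha_2) = -2$. Using $(\Lambda_0|\delta) = (\Lambda_0|\alpha_0) = 1$ I then obtain
\[
(\Lambda^{[K(m),m_2]}|\delta - \alpha_2) \,\ = \,\ -m + m_2,
\]
and therefore $\langle \Lambda^{[K(m),m_2]}, (\delta - \alpha_2)^\vee\rangle = m - m_2$, so integrability with respect to $\delta - \alpha_2$ is equivalent to $m - m_2 \in \zzz_{\geq 0}$.

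Combining the two conditions yields precisely $m, m_2 \in \zzz_{\geq 0}$ with $m_2 \leq m$, as claimed. There is no real obstacle in this argument; the only point demanding care is that $(\alpha_2|\alpha_2) = -2 < 0$, which flips a sign when passing from the bilinear pairing to the coroot pairing, and this is what turns integrability with respect to $\delta - \alpha_2$ into the upper bound $m_2 \leq m$ rather than a lower bound.
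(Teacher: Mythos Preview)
Your proof is correct. The paper states this lemma without proof, treating it as a routine verification, so there is no argument to compare against; your computation supplies exactly the standard check that the coroot pairings $\langle\Lambda,\alpha_2^\vee\rangle=m_2$ and $\langle\Lambda,(\delta-\alpha_2)^\vee\rangle=m-m_2$ must lie in $\zzz_{\geq 0}$, with the sign flip from $(\alpha_2|\alpha_2)=-2$ handled correctly.
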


In this paper, an irredicible $\widehat{A}(1,1)$-module 
$L(\Lambda)$ which is integrable with respect to 
$\alpha_2$ and $\delta-\alpha_2$ is called simply an 
\lq \lq integrable" $\widehat{A}(1,1)$-module, 
and $\Lambda$ is called simply an \lq \lq integrable" weight.
For an integrable weight $\Lambda$ which is atypical with respect to 
$\alpha_1$ and $\alpha_3$, the character ${\rm ch}_{\Lambda}^{(+)}$ 
and the supercharacter ${\rm ch}_{\Lambda}^{(-)}$ of $L(\Lambda)$
are obtained by the formulas
%(label=n4:eqn:2022-1207a1) %
%(label=n4:eqn:2022-1207a2) %
\begin{subequations}
{\allowdisplaybreaks
\begin{eqnarray}
\widehat{R}^{(+)} \, {\rm ch}_{L(\Lambda)}^{(+)} &=&
\sum_{w \in \langle r_{\alpha_2}, \, r_{\delta-\alpha_2}\rangle}
\varepsilon(w) \, w\bigg(\frac{e^{\Lambda+\rho}}{
(1+e^{-\alpha_1})(1+e^{-\alpha_3})}\bigg)
\nonumber
\\[1mm]
&=&
\sum_{w \in \langle r_{\alpha_2}\rangle} \varepsilon(w)
w \Bigg(
\sum_{j \in \zzz}t_{j\alpha_2^{\vee}}
\bigg(\frac{e^{\Lambda+\rho}}{
(1+e^{-\alpha_1})(1+e^{-\alpha_3})}\bigg)\Bigg)
\label{n4:eqn:2022-1207a1}
\\[1mm]
\widehat{R}^{(-)} \, {\rm ch}_{L(\Lambda)}^{(-)} &=&
\sum_{w \in \langle r_{\alpha_2}, \, r_{\delta-\alpha_2}\rangle}
\varepsilon(w) \, w\bigg(\frac{e^{\Lambda+\rho}}{
(1-e^{-\alpha_1})(1-e^{-\alpha_3})}\bigg)
\nonumber
\\[1mm]
&=&
\sum_{w \in \langle r_{\alpha_2}\rangle} \varepsilon(w)
w \Bigg(
\sum_{j \in \zzz}t_{j\alpha_2^{\vee}}
\bigg(\frac{e^{\Lambda+\rho}}{
(1-e^{-\alpha_1})(1-e^{-\alpha_3})}\bigg)\Bigg)
\label{n4:eqn:2022-1207a2}
\end{eqnarray}}
\end{subequations}
where $\widehat{R}^{(+)}$ (resp. $\widehat{R}^{(-)}$) is the denominator
(resp. super-denominator) of $\widehat{A}(1,1)$ and \\
$\alpha_2^{\vee} := \frac12 (\alpha_2|\alpha_2)\alpha_2 = -\alpha_2$, 
and $t_{\alpha}$ ($\alpha \in \hhh$) is the linear automorphism 
of $\hhh$ defined, in \cite{K1}, by
$$
t_{\alpha}(\lambda) \, := \, \lambda +(\lambda|\delta)\alpha
- \Big\{\frac{(\alpha|\alpha)}{2}(\lambda|\delta)+(\lambda|\alpha)
\Big\} \, \delta
$$
Putting 
\begin{subequations}
{\allowdisplaybreaks
\begin{eqnarray}
F^{(+)}_{\Lambda+\rho} & := & 
\sum_{j \, \in \, \zzz}t_{j\alpha_2^{\vee}}\bigg(
\frac{e^{\Lambda+\rho}}{(1+ e^{-\alpha_1})(1+e^{-\alpha_3})}\bigg)
\label{n4:eqn:2022-1207b1}
\\[1mm]
F^{(-)}_{\Lambda+\rho} & := & 
\sum_{j \, \in \, \zzz}t_{j\alpha_2^{\vee}}\bigg(
\frac{e^{\Lambda+\rho}}{(1- e^{-\alpha_1})(1-e^{-\alpha_3})}\bigg)
\label{n4:eqn:2022-1207b2}
\end{eqnarray}}
\end{subequations}
the formulas \eqref{n4:eqn:2022-1207a1} and \eqref{n4:eqn:2022-1207a2}
are written as follows:
%(label=n4:eqn:2022-1207c1) %%
%(label=n4:eqn:2022-1207c2) %%
\begin{subequations}
{\allowdisplaybreaks
\begin{eqnarray}
\widehat{R}^{(+)} \, {\rm ch}_{L(\Lambda)}^{(+)} &=& 
F^{(+)}_{\Lambda+\rho}-r_{\alpha_2}(F^{(+)}_{\Lambda+\rho})
\label{n4:eqn:2022-1207c1}
\\[1mm]
\widehat{R}^{(-)} \, {\rm ch}_{L(\Lambda)}^{(-)} &:=& 
F^{(-)}_{\Lambda+\rho}-r_{\alpha_2}(F^{(-)}_{\Lambda+\rho})
\label{n4:eqn:2022-1207c2}
\end{eqnarray}}
\end{subequations}

Noticing that 
\begin{subequations}
{\allowdisplaybreaks
\begin{eqnarray}
t_{j\alpha_2^{\vee}}(\Lambda_0) &=&
\Lambda_0-j\alpha_2+j^2\delta 
\label{n4:eqn:2022-1207d1}
\\[1mm]
t_{j\alpha_2^{\vee}}(\alpha_i) &=&\left\{
\begin{array}{lcl}
\alpha_i+j\delta & & (i \, = \, 1, \, 3) \\[1mm]
\alpha_2-2\delta & & (i \, = \, 2)
\end{array}\right.
\label{n4:eqn:2022-1207d2}
\end{eqnarray}}
\end{subequations}
we have the following:

\medskip

%(line=623) %%
%(label=note:2022-1012e) %%
\begin{note} 
\label{note:2022-1012e}
Let $\Lambda \, = \, \Lambda^{[K(m),m_2]} \, = \, 
-m \Lambda_0-\dfrac{m_2}{2}(\alpha_1+\alpha_3)$. Then 
\begin{enumerate}
\item[{\rm 1)}]
\begin{enumerate}
\item[{\rm (i)}] \quad $F^{(+)}_{\Lambda+\rho} \,\ = \,\ 
e^{-m\Lambda_0}
\sum\limits_{j \, \in \, \zzz} \dfrac{
e^{mj\alpha_2-\frac{m_2+1}{2}(\alpha_1+\alpha_3)} \, 
q^{mj^2+(m_2+1)j}
}{(1+ e^{-\alpha_1}q^j)(1+e^{-\alpha_3}q^j)}$
\item[{\rm (ii)}] \quad $r_{\alpha_2}(F^{(+)}_{\Lambda+\rho}) 
\,\ = \,\ 
e^{-m\Lambda_0} \, 
\sum\limits_{j \, \in \, \zzz} \dfrac{
e^{-mj\alpha_2-\frac{m_2+1}{2}(\alpha_1+2\alpha_2+\alpha_3)} \, 
q^{mj^2+(m_2+1)j}
}{(1+ e^{-\alpha_1-\alpha_2}q^j)(1+e^{-\alpha_2-\alpha_3}q^j)}$
\end{enumerate}
\item[{\rm 2)}]
\begin{enumerate}
\item[{\rm (i)}] \quad $F^{(-)}_{\Lambda+\rho} \,\ = \,\ 
e^{-m\Lambda_0}
\sum\limits_{j \, \in \, \zzz} \dfrac{
e^{mj\alpha_2-\frac{m_2+1}{2}(\alpha_1+\alpha_3)} \, 
q^{mj^2+(m_2+1)j}
}{(1- e^{-\alpha_1}q^j)(1-e^{-\alpha_3}q^j)}$
\item[{\rm (ii)}] \quad $r_{\alpha_2}(F^{(-)}_{\Lambda+\rho}) 
\,\ = \,\ 
e^{-m\Lambda_0} \, 
\sum\limits_{j \, \in \, \zzz} \dfrac{
e^{-mj\alpha_2-\frac{m_2+1}{2}(\alpha_1+2\alpha_2+\alpha_3)} \, 
q^{mj^2+(m_2+1)j}
}{(1- e^{-\alpha_1-\alpha_2}q^j)(1-e^{-\alpha_2-\alpha_3}q^j)}$
\end{enumerate}
\end{enumerate}
where $q = e^{-\delta}$.
\end{note}

\begin{proof} \,\ By \eqref{n4:eqn:2022-1207d1} and 
\eqref{n4:eqn:2022-1207d2}, we have 
\begin{equation}
t_{j\alpha_2^{\vee}}(\Lambda+\rho) \, = \,
-m\Lambda_0+mj\alpha_2-\frac{m_2+1}{2}(\alpha_1+\alpha_3)
-\big\{mj^2+(m_2+1)j\big\} \, \delta
\label{n4:eqn:2022-1207e}
\end{equation}
Then the formulas in Note \ref{note:2022-1012e} 
follow immediately from \eqref{n4:eqn:2022-1207e}.
\end{proof}

\medskip

Define the coordinates on the Cartan subalgebra $\hhh$ of 
$\widehat{A}(1,1)$ by 
\begin{equation}
2\pi i \Big(-\tau\Lambda_0+
\frac{z_2-z_1}{2}(\alpha_1+\alpha_3)-z_1\alpha_2+t\delta\Big) 
\,\ =: \,\ 
(\tau, z_1, z_2, t)
\label{n4:eqn:2022-1203d}
\end{equation}

%(line=690) %%
%(label=n4:note:2022-1203a) %%
\begin{note} \,\ 
\label{n4:note:2022-1203a}
The following formulas hold for $h=(\tau, z_1, z_2,t) \in \hhh$
and $z = (0,z_1, z_2,0) \in \overline{\hhh}$.
\begin{enumerate}
\item[{\rm 1)}] \,\ $\left\{
\begin{array}{lclcl}
e^{-\alpha_1(h)} &=& e^{-\alpha_3(h)} &=& e^{2\pi iz_1} \\[1mm]
e^{-(\alpha_1+\alpha_2)(h)} &=& e^{-(\alpha_2+\alpha_3)(h)} &=& e^{-2\pi iz_2} 
\\[1mm]
e^{-\alpha_2(h)} & & &=& e^{-2\pi i(z_1+z_2)} \\[1mm]
e^{-(\alpha_1+2\alpha_2+\alpha_3)(h)} & & &=& e^{-4\pi iz_2} 
\end{array} \right. $
\item[{\rm 2)}] \quad $(z|z) \,\ = \,\ -2z_1z_2$
\end{enumerate}
\end{note}

Then the formulas in Note \ref{note:2022-1012e} are written 
in these coordinates as follows:

\medskip 
%(line=833) %%
%(label=lemma:2022-1014a) %%
\begin{lemma} \,\ 
\label{lemma:2022-1014a}
Let $\Lambda \, = \, \Lambda^{[K(m),m_2]} \, = \, 
-m \Lambda_0-\dfrac{m_2}{2}(\alpha_1+\alpha_3)$ be integrable. Then
\begin{enumerate}
\item[{\rm 1)}]
\begin{enumerate}
\item[{\rm (i)}] \quad $F^{(+)}_{\Lambda+\rho} (\tau, z_1,z_2,t)
\,\ = \,\ 
(-1)^{m_2+1} \, \Phi^{(A(1|1))[m, m_2+1]}_1
(\tau, z_1+\frac12,z_2+\frac12,t)$
\item[{\rm (ii)}] \quad $(r_{\alpha_2}F^{(+)}_{\Lambda+\rho}) (\tau, z_1,z_2,t)
\,\ = \,\ 
(-1)^{m_2+1} \, \Phi^{(A(1|1))[m, m_2+1]}_2
(\tau, z_1+\frac12,z_2+\frac12,t)$
\end{enumerate}
\item[{\rm 2)}]
\begin{enumerate}
\item[{\rm (i)}] \quad $F^{(-)}_{\Lambda+\rho} (\tau, z_1,z_2,t)
\,\ = \,\ 
\Phi^{(A(1|1))[m, m_2+1]}_1(\tau, z_1,z_2,t)$
\item[{\rm (ii)}] \quad $(r_{\alpha_2}F^{(-)}_{\Lambda+\rho}) (\tau, z_1,z_2,t)
\,\ = \,\ 
\Phi^{(A(1|1))[m, m_2+1]}_2(\tau, z_1,z_2,t)$
\end{enumerate}
\end{enumerate}
\end{lemma}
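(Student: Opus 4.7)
The plan is to rewrite the root-theoretic sums in Note \ref{note:2022-1012e} in the coordinates $(\tau,z_1,z_2,t)$ fixed by \eqref{n4:eqn:2022-1203d} and recognise them as the series \eqref{eqn:2022-1015a1}--\eqref{eqn:2022-1015a2} defining $\Phi^{(A(1|1))[m,m_2+1]}_1$ and $\Phi^{(A(1|1))[m,m_2+1]}_2$.

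First I would compute $\Lambda_0(h)=2\pi i t$ using $(\Lambda_0|\alpha_i)=\delta_{i,0}$, $(\Lambda_0|\Lambda_0)=0$ and $\delta=\sum_i\alpha_i$; this yields $e^{-m\Lambda_0(h)}=e^{-2\pi imt}$, which is exactly the prefactor appearing in both $\Phi^{(A(1|1))[m,s]}_1$ and $\Phi^{(A(1|1))[m,s]}_2$. Substituting this together with the root evaluations of Note \ref{n4:note:2022-1203a} into parts 2(i)--2(ii) of Note \ref{note:2022-1012e} is then purely mechanical: the numerators become $e^{2\pi imj(z_1+z_2)+2\pi i(m_2+1)z_1}$ and $e^{-2\pi imj(z_1+z_2)-2\pi i(m_2+1)z_2}$ (the latter also needing $(\alpha_1+2\alpha_2+\alpha_3)(h)=4\pi iz_2$), while the denominators become $(1-e^{2\pi iz_1}q^j)^2$ and $(1-e^{-2\pi iz_2}q^j)^2$. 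These match \eqref{eqn:2022-1015a1}--\eqref{eqn:2022-1015a2} with $s=m_2+1$, disposing of part~2).

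For part~1), the same substitutions in 1(i)--1(ii) produce identical numerators but with $(1+e^{\pm 2\pi iz_j}q^j)^2$ in the denominators. The key idea is to absorb this sign flip via the shift $z_1\mapsto z_1+\tfrac12,\ z_2\mapsto z_2+\tfrac12$ inside $\Phi^{(A(1|1))[m,m_2+1]}_{1,2}$, since $e^{2\pi i(z+1/2)}=-e^{2\pi iz}$ converts each $(1-\cdots)^2$ into $(1+\cdots)^2$. This shift contributes an overall factor $e^{\pi i(m_2+1)}=(-1)^{m_2+1}$ from the $2\pi i(m_2+1)z_j$ exponent, plus a $j$-dependent phase $e^{\pm 2\pi imj}$ from the $2\pi imj(z_1+z_2)$ exponent; the latter trivialises term by term precisely because integrability forces $m\in\nnn$ by Lemma \ref{n4:lemma:2022-1206a}. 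Rearranging produces the claimed $(-1)^{m_2+1}$ prefactor.

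The only point requiring care is the phase bookkeeping in part~1): one must verify that the only surviving overall factor after the shift is $(-1)^{m_2+1}$, which relies crucially on $m\in\zzz$ rather than merely $m\in\tfrac12\zzz$, so that the $j$-dependent phases $e^{\pm 2\pi imj}$ vanish inside the sum. Beyond this, no step presents a genuine obstacle; the entire argument reduces to a direct substitution in the four formulas of Note \ref{note:2022-1012e}.
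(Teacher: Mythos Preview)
Your proposal is correct and follows essentially the same approach as the paper: substitute the coordinate evaluations of Note~\ref{n4:note:2022-1203a} (together with $e^{-m\Lambda_0(h)}=e^{-2\pi imt}$) into the four expressions of Note~\ref{note:2022-1012e}, and for part~1) absorb the $(1+\cdots)^2$ into $(1-\cdots)^2$ via the half-shift $z_i\mapsto z_i+\tfrac12$, extracting the factor $(-1)^{m_2+1}$. Your explicit remark that the residual phase $e^{2\pi imj}$ trivialises because integrability forces $m\in\nnn$ is a detail the paper leaves implicit, but otherwise the arguments coincide.
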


\begin{proof} 1) (i) \,\ By Note \ref{note:2022-1012e} and 
Note \ref{n4:note:2022-1203a}, we have 
{\allowdisplaybreaks
\begin{eqnarray*}
& & \hspace{-15mm}
F^{(+)}_{\Lambda+\rho}(\tau, z_1,z_2,t) 
\,\ = \,\ 
e^{-2\pi imt}
\sum_{j \, \in \, \zzz} \frac{
e^{2\pi imj(z_1+z_2)+2\pi i(m_2+1)z_1} \, 
q^{mj^2+(m_2+1)j}
}{(1+ e^{2\pi iz_1}q^j)^2}
\\[1mm]
&=&
(-1)^{m_2+1} \, e^{-2\pi imt}
\sum_{j \, \in \, \zzz} \frac{
e^{2\pi imj(z_1+z_2+1)+2\pi i(m_2+1)(z_1+\frac12)} \, 
q^{mj^2+(m_2+1)j}
}{(1- e^{2\pi i(z_1+\frac12)}q^j)^2}
\\[1mm]
&=&
(-1)^{m_2+1} \, \Phi^{(A(1|1))[m, m_2+1]}_1
(\tau, z_1+\tfrac12,z_2+\tfrac12,t)
\end{eqnarray*}}
The proof of the rests is quite similar.
\end{proof}

By this Lemma \ref{lemma:2022-1014a} together with 
\eqref{n4:eqn:2022-1207c1} and \eqref{n4:eqn:2022-1207c2}, 
we obtain the following:

\medskip 

%(line=896)  %%
%(label=lemma:2022-1120a) %%
\begin{lemma} 
\label{lemma:2022-1120a}
Let $m$ and $m_2$ be non-negative integers such that $0 \leq m_2 \leq m$.
Then the character and the super-character of the integrable 
$N=4$ module $L(\Lambda^{[K(m),m_2]})$ are given by the following 
formulas.
\begin{enumerate}
\item[{\rm 1)}] \quad $\big[
\widehat{R}^{(+)} \cdot {\rm ch}^{(+)}_{\Lambda^{[K(m),m_2]}}\big]
(\tau, z_1,z_2,t) \,\ = \,\ 
(-1)^{m_2+1} \, 
\Phi^{(A(1|1))[m,m_2+1]}(\tau, \, z_1+\frac12, \, z_2+\frac12, \, t) $
\item[{\rm 2)}] \quad $\big[
\widehat{R}^{(-)} \cdot {\rm ch}^{(-)}_{\Lambda^{[K(m),m_2]}}\big]
(\tau, z_1,z_2,t) \,\ = \,\ 
\Phi^{(A(1|1))[m,m_2+1]}(\tau, z_1,z_2,t) $
\end{enumerate}
\end{lemma}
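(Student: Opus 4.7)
The plan is to substitute the explicit formulas of Lemma \ref{lemma:2022-1014a} into the character identities \eqref{n4:eqn:2022-1207c1} and \eqref{n4:eqn:2022-1207c2}, and then recognise the resulting difference as $\Phi^{(A(1|1))[m,m_2+1]}$ via the definition \eqref{eqn:2022-1015a3}. The hypothesis $0 \leq m_2 \leq m$ together with Lemma \ref{n4:lemma:2022-1206a} guarantees that $\Lambda^{[K(m),m_2]}$ is integrable with respect to $\alpha_2$ and $\delta-\alpha_2$, so that the Weyl--Kac type sums in \eqref{n4:eqn:2022-1207a1} and \eqref{n4:eqn:2022-1207a2} legitimately compute the (super)character; the atypicality with respect to $\alpha_1,\alpha_3$ is what allows the factors $(1\pm e^{-\alpha_1})^{-1}(1\pm e^{-\alpha_3})^{-1}$ in the definition of $F^{(\pm)}_{\Lambda+\rho}$, and this was already noted after \eqref{n4:eqn:2022-1203b}.

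For part 2), the combination is direct: by \eqref{n4:eqn:2022-1207c2},
\[
\widehat{R}^{(-)}\cdot{\rm ch}^{(-)}_{\Lambda^{[K(m),m_2]}}
= F^{(-)}_{\Lambda+\rho}-r_{\alpha_2}(F^{(-)}_{\Lambda+\rho}),
\]
and by parts 2)(i)--(ii) of Lemma \ref{lemma:2022-1014a} this difference evaluates in the coordinates \eqref{n4:eqn:2022-1203d} to
\[
\Phi^{(A(1|1))[m,m_2+1]}_1(\tau,z_1,z_2,t) - \Phi^{(A(1|1))[m,m_2+1]}_2(\tau,z_1,z_2,t),
\]
which is precisely $\Phi^{(A(1|1))[m,m_2+1]}(\tau,z_1,z_2,t)$ by \eqref{eqn:2022-1015a3}.

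For part 1), exactly the same argument applies, but now parts 1)(i)--(ii) of Lemma \ref{lemma:2022-1014a} introduce a common prefactor $(-1)^{m_2+1}$ and shift both coordinates $z_1,z_2$ by $\tfrac12$. Pulling the prefactor out of the difference and applying \eqref{eqn:2022-1015a3} at the shifted arguments gives the claimed formula. The only potentially nontrivial point is that the shift by $\tfrac12$ in the arguments of $\Phi^{(A(1|1))[m,m_2+1]}_1$ and of $\Phi^{(A(1|1))[m,m_2+1]}_2$ is the same, so the sign $(-1)^{m_2+1}$ factors out cleanly from both; this follows by inspection of Note \ref{n4:note:2022-1203a} part 1), where $e^{-\alpha_1(h)}=e^{-\alpha_3(h)}$ and $e^{-(\alpha_1+\alpha_2)(h)}=e^{-(\alpha_2+\alpha_3)(h)}$ both acquire the same sign flip under $z_i \mapsto z_i+\tfrac12$.

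I do not expect a genuine obstacle: every ingredient is already in place, and the proof is essentially an assembly of \eqref{n4:eqn:2022-1207c1}--\eqref{n4:eqn:2022-1207c2}, Lemma \ref{lemma:2022-1014a}, and \eqref{eqn:2022-1015a3}. The only bookkeeping to double-check is the sign $(-1)^{m_2+1}$ in part 1), which tracks the transition from the fermionic denominator $(1+e^{-\alpha_i})$ to the bosonic form needed for $\Phi^{(A(1|1))[m,s]}_1$; this is exactly the half-period shift computation carried out in the proof of Lemma \ref{lemma:2022-1014a} 1)(i), and no new calculation is required.
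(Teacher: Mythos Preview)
Your proposal is correct and follows exactly the paper's approach: the paper simply states that the lemma follows from Lemma~\ref{lemma:2022-1014a} together with \eqref{n4:eqn:2022-1207c1} and \eqref{n4:eqn:2022-1207c2}, which is precisely the assembly you carry out (plus the definition \eqref{eqn:2022-1015a3}). Your additional remarks on integrability via Lemma~\ref{n4:lemma:2022-1206a} and on why the sign $(-1)^{m_2+1}$ factors out cleanly are accurate and more detailed than the paper's one-line justification.
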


\section{Characters of principal admissible representations 
of $\widehat{A}(1,1)$}
\label{sec:A(11):admissible}

\subsection{Principal admissible simple subsets of $\widehat{A}(1,1)$}
%(line=921) %%

The list of principal admissible simple subsets of $\widehat{A}(1,1)$
is shown in \S 8 of \cite{KW2017b} as follows where $k_3=k_1$: 
{\allowdisplaybreaks
\begin{eqnarray*}
\Pi^{(M), \, {\rm (I)}}_{k_1,k_2} \hspace{2mm} &=&\big\{
k_0\delta+\alpha_0, \,\ 
k_1\delta+\alpha_1, \,\ 
k_2\delta+\alpha_2, \,\ 
k_3\delta+\alpha_3\big\}, \,\ M =\sum_{i=0}^3k_i+1
\\[1mm]
\Pi^{(M), \, {\rm (II)}}_{k_1,k_2} \hspace{1mm} &=&\big\{
k_0\delta-\alpha_0, \,\ 
k_1\delta-\alpha_1, \,\ 
k_2\delta-\alpha_2, \,\ 
k_3\delta-\alpha_3\big\}, \,\ M =\sum_{i=0}^3k_i-1
\\[1mm]
\Pi^{(M), \, {\rm (III)}}_{k_1,k_2} &=&\big\{
k_0\delta+\alpha_0, \,\ 
k_1\delta+\alpha_1+\alpha_2, \,\ 
k_2\delta-\alpha_2, \,\ 
k_3\delta+\alpha_2+\alpha_3\big\}, \,\ M =\sum_{i=0}^3k_i+1
\\[1mm]
\Pi^{(M), \, {\rm (IV)}}_{k_1,k_2} &=&\big\{
k_0\delta-\alpha_0, \,\ 
k_1\delta-(\alpha_1+\alpha_2), \,\ 
k_2\delta+\alpha_2, \,\ 
k_3\delta-(\alpha_2+\alpha_3)\big\}, \,\ M =\sum_{i=0}^3k_i-1
\end{eqnarray*}}
Note that the range of the parameters $(k_1,k_2)$ is as follows:
%(label=n4:eqn:2023-101a) %%
\begin{equation} \hspace{-15mm}
\begin{array}{lclcl}
{\rm for} \,\ \Pi^{(M), \, {\rm (I)}}_{k_1,k_2}& : &
k_1, \,\ k_2 \, \geq \, 0 & \text{and} & 2k_1+k_2 \, \leq \, M-1
\\[3mm]
{\rm for} \,\ \Pi^{(M), \, {\rm (II)}}_{k_1,k_2}& : &
k_1, \,\ k_2 \, \geq \, 1 & \text{and} & 2k_1+k_2 \, \leq \, M
\\[3mm]
{\rm for} \,\ \Pi^{(M), \, {\rm (III)}}_{k_1,k_2}& : &
k_1 \, \geq \, 0, \quad k_2 \, \geq \, 1 & \text{and} & 2k_1+k_2 \, \leq \, M-1
\\[3mm]
{\rm for} \,\ \Pi^{(M), \, {\rm (IV)}}_{k_1,k_2}& : &
k_1 \, \geq \, 1, \quad k_2 \, \geq \, 0 & \text{and} & 2k_1+k_2 \, \leq \, M
\end{array}
\label{n4:eqn:2023-101a}
\end{equation}
These principal admissible simple subsets can be written as 
$\Pi^{(M), \, (\heartsuit)}_{k_1,k_2} = 
t_{\beta}\overline{y}(\Pi^{(M), \, {\rm (I)}}_{0,0})$ \,\ 
($\heartsuit \, = $ I $\sim$ IV), where 
$(\overline{y}, \beta)$ are as follows in each case:

%(label=n4:eqn:2022-1203c) %%
%(line=980) %%
\begin{equation}
\begin{array}{lclcl}
{\rm for} \,\ \Pi^{(M), \, {\rm (I)}}_{k_1,k_2}& : &\overline{y}=1, & &
\beta=-\dfrac{2k_1+k_2}{2}(\alpha_1+\alpha_3)-k_1\alpha_2
\\[3mm]
{\rm for} \,\ \Pi^{(M), \, {\rm (II)}}_{k_1,k_2}& : 
&\overline{y}=r_{\theta}r_{\alpha_2}, & &
\beta= \hspace{3mm}
\dfrac{2k_1+k_2}{2}(\alpha_1+\alpha_3)+k_1\alpha_2
\\[3mm]
{\rm for} \,\ \Pi^{(M), \, {\rm (III)}}_{k_1,k_2}& : 
&\overline{y}=r_{\alpha_2}, & &
\beta=-\dfrac{2k_1+k_2}{2}(\alpha_1+\alpha_3)-(k_1+k_2)\alpha_2
\\[3mm]
{\rm for} \,\ \Pi^{(M), \, {\rm (IV)}}_{k_1,k_2}& : 
&\overline{y}=r_{\theta}, & &
\beta= \hspace{3mm}
\dfrac{2k_1+k_2}{2}(\alpha_1+\alpha_3)+(k_1+k_2)\alpha_2
\end{array}
\label{n4:eqn:2022-1203c}
\end{equation}

The following formulas will be used in the next section to compute 
characters of principal admissible $\widehat{A}(1,1)$-modules. 

\medskip

%(line=1004) %%
%(Vol.325 p.73)
%(label=note:Vol.325p.73) %%
\begin{note} 
\label{note:Vol.325p.73}
Let $z \, = \, (z_1, z_2) \, := \, 
(z_2-z_1)\dfrac{\alpha_1+\alpha_3}{2} - z_1\alpha_2
\, \in \, \overline{\hhh}$. Then 
$\overline{y}^{-1}z$ and $\overline{y}^{-1}(z+\tau \beta)$ are 
as follow:
\begin{enumerate}
\item[{\rm (I)}] \,\ {\rm for} \,\ $\Pi^{(M), \, {\rm (I)}}_{k_1,k_2}$
\,\ : \,\
$\left\{
\begin{array}{lcl}
\overline{y}^{-1}z &=& z \,\ = \,\ (z_1, \, z_2) \\[1mm]
\overline{y}^{-1}(z+\tau \beta) &=& 
\big(z_1+k_1\tau, \,\ z_2-(k_1+k_2)\tau\big)
\end{array}\right. $
\item[{\rm (II)}] \,\ {\rm for} \,\ $\Pi^{(M), \, {\rm (II)}}_{k_1,k_2}$
\,\ : \,\
$\left\{
\begin{array}{lcl}
\overline{y}^{-1}z &=& (z_2, \, z_1) \\[1mm]
\overline{y}^{-1}(z+\tau \beta) &=& 
\big(z_2+k_1\tau, \,\ z_1-(k_1+k_2)\tau\big)
\end{array}\right. $
\item[{\rm (III}] \,\ {\rm for} \,\ $\Pi^{(M), \, {\rm (III)}}_{k_1,k_2}$
\,\ : \,\
$\left\{
\begin{array}{lcl}
\overline{y}^{-1}z &=& (-z_2, \, -z_1) \\[1mm]
\overline{y}^{-1}(z+\tau \beta) &=& 
\big(-z_2+k_1\tau, \,\ -z_1-(k_1+k_2)\tau\big)
\end{array}\right. $
\item[{\rm (IV)}] \,\ {\rm for} \,\ $\Pi^{(M), \, {\rm (IV)}}_{k_1,k_2}$
\,\ : \,\
$\left\{
\begin{array}{lcl}
\overline{y}^{-1}z &=& (-z_1, \, -z_2) \\[1mm]
\overline{y}^{-1}(z+\tau \beta) &=& 
\big(-z_1+k_1\tau, \,\ -z_2-(k_1+k_2)\tau\big)
\end{array}\right. $
\end{enumerate}
\end{note}

\medskip %%

%(line=1055) %%
%(Vol.325 p.74) %%
%(label=lemma:Vol.325p.74) %%
\begin{lemma} 
\label{lemma:Vol.325p.74}
In each case $\Pi^{(M), \,  (\heartsuit)}_{k_1,k_2}$  
$(\heartsuit =$ {\rm I} $\sim$ {\rm IV)}, the element
$$
\bigg(M\tau, \,\ \overline{y}^{-1}(z+\tau \beta), \,\ 
\frac{1}{M}\Big(t+(z|\beta)+\frac{\tau |\beta|^2}{2}\Big)\bigg)
$$
in $\hhh$ is explicitly written as follows:
\begin{enumerate}
\item[{\rm (I)}] \,\ {\rm for} \,\ $\Pi^{(M), \, {\rm (I)}}_{k_1,k_2}
\,\ : $
$$
\Big(M\tau, \hspace{4.5mm} 
z_1+k_1\tau, \hspace{4.5mm} 
z_2-(k_1+k_2)\tau, \,\ \dfrac{1}{M}
\big[t+(k_1+k_2)z_1-k_1z_2+k_1(k_1+k_2)\tau\big]\Big)
$$
\item[{\rm (II)}] \,\ {\rm for} \,\ $\Pi^{(M), \, {\rm (II)}}_{k_1,k_2}
\,\ : $ 
$$
\Big(M\tau, \,\ -z_1+k_1\tau, \,\ 
-z_2-(k_1+k_2)\tau, \,\ \dfrac{1}{M}
\big[t-(k_1+k_2)z_1+k_1z_2+k_1(k_1+k_2)\tau\big]\Big)
$$
\item[{\rm (III)}] \,\ {\rm for} \,\ $\Pi^{(M), \, {\rm (III)}}_{k_1,k_2}
\,\ : $ 
$$
\Big(M\tau, \,\ -z_2+k_1\tau, \,\ 
-z_1-(k_1+k_2)\tau, \,\ \dfrac{1}{M}
\big[t+k_1z_1-(k_1+k_2)z_2+k_1(k_1+k_2)\tau\big]\Big)
$$
\item[{\rm (IV)}] \,\ {\rm for} \,\ $\Pi^{(M), \, {\rm (IV)}}_{k_1,k_2}
\,\ : $ 
$$
\Big(M\tau, \hspace{4.5mm} 
z_2+k_1\tau, \hspace{4.5mm} 
z_1-(k_1+k_2)\tau, \,\ \dfrac{1}{M}
\big[t-k_1z_1+(k_1+k_2)z_2+k_1(k_1+k_2)\tau\big]\Big)
$$
\end{enumerate}
\end{lemma}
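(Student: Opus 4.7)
The proof is essentially a direct computation in each of the four cases, using the explicit formulas for $\beta$ and $\overline{y}$ listed in \eqref{n4:eqn:2022-1203c} together with Note \ref{note:Vol.325p.73}. The first slot $M\tau$ is tautological, and the middle slot $\overline{y}^{-1}(z+\tau\beta)$ is already made explicit in Note \ref{note:Vol.325p.73}. So the only real content is computing the last slot $\frac{1}{M}\bigl(t+(z|\beta)+\tfrac{\tau|\beta|^2}{2}\bigr)$ case by case, i.e.\ evaluating $(z|\beta)$ and $|\beta|^2$ using the Cartan matrix of $\widehat{A}(1,1)$.

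First I would record the three basic inner products that feed everything else. Since $z=(z_2-z_1)\tfrac{\alpha_1+\alpha_3}{2}-z_1\alpha_2$, one reads off from the given Gram matrix that $(z\,|\,\alpha_1+\alpha_3)=-2z_1$ and $(z\,|\,\alpha_2)=z_1+z_2$, and from the same table $(\alpha_1+\alpha_3\,|\,\alpha_1+\alpha_3)=0$, $(\alpha_1+\alpha_3\,|\,\alpha_2)=2$, $(\alpha_2\,|\,\alpha_2)=-2$. These give at once the general identities, valid for $\beta=a(\alpha_1+\alpha_3)+b\alpha_2$,
\[
(z\,|\,\beta)=-2az_1+b(z_1+z_2), \qquad |\beta|^2=4ab-2b^2.
\]

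Then I would plug in the four values of $\beta$ from \eqref{n4:eqn:2022-1203c}. For case (I), with $a=-(2k_1+k_2)/2$ and $b=-k_1$, one obtains $(z|\beta)=(k_1+k_2)z_1-k_1z_2$ and $|\beta|^2=2k_1(k_1+k_2)$, which combined with Note \ref{note:Vol.325p.73}(I) yields the claimed triple. Cases (II), (III), (IV) are treated in exactly the same way: the value of $b$ changes between $-k_1$ and $-(k_1+k_2)$ (or their negatives), and in every case the product $|\beta|^2/2$ collapses to the same quantity $k_1(k_1+k_2)$, while $(z|\beta)$ tracks the sign pattern already visible in $\overline{y}^{-1}z$ (swaps and/or negations of $z_1,z_2$ combined with the $k_1,k_1+k_2$ coefficients).

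There is no genuine obstacle; the only thing to watch is bookkeeping of signs in the four cases, since the involutions $r_\theta$ and $r_{\alpha_2}$ effectively act by swapping or negating the coordinates and one must ensure that the coefficients of $z_1,z_2$ in the $t$-slot reshuffle consistently with the middle slot given by Note \ref{note:Vol.325p.73}. After these four parallel substitutions the four displayed formulas read off directly, completing the proof.
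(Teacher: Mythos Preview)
Your approach is correct and is exactly the routine verification the paper has in mind; the paper states this lemma without proof, immediately after Note~\ref{note:Vol.325p.73}, treating it as a straightforward substitution. Your general identities $(z|\beta)=-2az_1+b(z_1+z_2)$ and $|\beta|^2=4ab-2b^2$ for $\beta=a(\alpha_1+\alpha_3)+b\alpha_2$ are right, and plugging in the four $(a,b)$ pairs from \eqref{n4:eqn:2022-1203c} indeed collapses $|\beta|^2/2$ to $k_1(k_1+k_2)$ in every case while $(z|\beta)$ gives the displayed $z_1,z_2$ coefficients.

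One caveat worth flagging: you say the middle slot is read off directly from Note~\ref{note:Vol.325p.73}, but if you compare carefully, the middle slots listed there for cases (II) and (IV) are interchanged relative to the lemma (and relative to what a direct computation of $\overline{y}^{-1}z$ with $\overline{y}=r_\theta r_{\alpha_2}$ versus $\overline{y}=r_\theta$ actually gives). The lemma's version is the one consistent with the downstream formulas in Lemma~\ref{lemma:2022-1018a} and Proposition~\ref{n4:prop:2022-1212a}, so this is a typo in Note~\ref{note:Vol.325p.73}, not a flaw in your argument; just don't lean on that note for (II) and (IV) without redoing the middle slot yourself.
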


The following formulas can be checked easily and are used to compute 
$h_{\lambda}$ and $s_{\lambda}$ in section \ref{sec:h-lambda}
and the integrability of principal admissible weights 
with respect to $\alpha_0$ in section \ref{sec:vanishing}.

\medskip

%(line=1111) %%
%(label=note:2022-1128d) %%
\begin{note}
\label{note:2022-1128d}
For $\beta$ and $\overline{y}$ defined by \eqref{n4:eqn:2022-1203c}, 
the following formulas hold:
\begin{enumerate}
\item[{\rm 1)}] \,\ $(t_{\beta}\overline{y}(\alpha_1+\alpha_3)| \, \alpha_2)
\,\ = \,\ 
(\overline{y}(\alpha_1+\alpha_3)| \, \alpha_2)
\,\ = \,\ 
\left\{
\begin{array}{rcl}
2 & \,\ {\rm if} & \heartsuit = {\rm I \,\ or \,\ IV}
\\[1mm]
-2 & \,\ {\rm if} & \heartsuit = {\rm II \,\ or \,\ III}
\end{array}\right. $

\item[{\rm 2)}] \,\ $(\beta| \, \alpha_2) \,\ = \,\ 
\left\{
\begin{array}{rcl}
- \, k_2 & \,\ {\rm if} & \heartsuit = {\rm I \,\ or \,\ IV}
\\[1mm]
k_2 & \,\ {\rm if} & \heartsuit = {\rm II \,\ or \,\ III}
\end{array}\right. $
\end{enumerate}
\end{note}

\medskip

%(line=1141)
%(label=note:2022-1128c)
\begin{note} 
\label{note:2022-1128c}
For $\beta$ and $\overline{y}$ defined by \eqref{n4:eqn:2022-1203c}, 
the following formulas hold:
\begin{enumerate}
\item[{\rm 1)}] \,\ $(t_{\beta}\overline{y}(\alpha_1+\alpha_3)| \, \theta)
\,\ = \,\ 
(\overline{y}(\alpha_1+\alpha_3)| \, \theta)
\,\ = \,\ 
\left\{
\begin{array}{rcl}
2 & \,\ {\rm if} & \heartsuit = {\rm I \,\ or \,\ III} 
\\[1.5mm]
-2 & \,\ {\rm if} & \heartsuit = {\rm II \,\ or \,\ IV} 
\end{array}\right. $
\item[{\rm 2)}] \,\ $(\beta| \, \theta) \,\ = \,\ 
\left\{
\begin{array}{ccl}
-(2k_1+k_2) & \,\ {\rm if} & \heartsuit = {\rm I \,\ or \,\ III} 
\\[1.5mm]
2k_1+k_2 & \,\ {\rm if} & \heartsuit = {\rm II \,\ or \,\ IV}
\end{array}\right. $
\end{enumerate}
\end{note}

\subsection{Characters of principal admissible $\widehat{A}(1,1)$-modules}
%(line=1048) %%

To describe and compute the characters of principal admissible modules 
we use notations and formulas from \cite{KW1989} and section 3 of 
\cite{W2001b}, which hold in the case of Lie superalgebras as well.
For $M, m \in \nnn$ such that ${\rm gcd}(M,m)=1$ and $\heartsuit =$ I $\sim$ IV, 
let $\Lambda^{(M) [K(m),m_2] (\heartsuit)}_{k_1,k_2}$ 
denote the principal admissible weight $\lambda$ of level $\frac{-m}{M}$ 
such that $\Pi_{\lambda}= \Pi^{(M) \, (\heartsuit)}_{k_1, k_2}$ and 
$\lambda^0 = \Lambda^{[K(m), m_2]}$. Then, by Theorem 2.1 in \cite{KW1989}
or by Lemma 3.2.4 in \cite{W2001b}, this weight is given by the 
following formula with $(\beta, \overline{y})$ given in 
\eqref{n4:eqn:2022-1203c}: 
%(label=n4:eqn:2022-1211c) %%
%(label=n4:eqn:2022-1211b) %%
\begin{subequations}
{\allowdisplaybreaks
\begin{eqnarray}
& & \hspace{-15mm}
\Lambda^{(M) [K(m),m_2] (\heartsuit)}_{k_1,k_2}
\,\ = \,\ 
(t_{\beta}\overline{y}).\Big(
\Lambda^{[K(m), m_2]}-(M-1)\Big(-\frac{m}{M}\Big)\Lambda_0\Big)
\label{n4:eqn:2022-1211c}
\\[2mm]
&=&
- \, \frac{m}{M} \Lambda_0
- \frac{m}{M} \beta
- \frac{m_2+1}{2} \overline{y}(\alpha_1+\alpha_3)
- \rho
+ \Big[\frac{mk_1(k_1+k_2)}{M}-k_1(m_2+1)\Big] \delta
\label{n4:eqn:2022-1211b}
\end{eqnarray}}
\end{subequations}

The character of a principal admissible module $L(\lambda)$, where
$\lambda \, = \, 
(t_{\beta}\overline{y}).(\lambda^0-(M-1)(K+h^{\vee})\Lambda_0)$, 
is given by Theorem 3.2 in \cite{KW1989} 
or Theorem 3.3.4 in \cite{W2001b}:
$$
\big[\widehat{R}^{(\pm)} \cdot {\rm ch}_{L(\lambda)}^{(\pm)}\big](\tau,z,t) 
\,\ = \,\ 
\big[\widehat{R}^{(\pm)} \cdot {\rm ch}_{L(\lambda^0)}^{(\pm)}\big]
\Big(M\tau, \,\ \overline{y}^{-1}(z+\tau \beta), \,\ 
\frac{1}{M}\big(t+(z|\beta)+\frac{\tau |\beta|^2}{2}\big)\Big).
$$
Using this formula, the numerators of the character and the super-character 
of the princilal admissible module 
$L(\Lambda^{(M) [K(m),m_2] (\heartsuit)}_{k_1,k_2})$ 
$(\heartsuit = {\rm I} \sim {\rm IV})$ are obtained as folows:

\medskip

%(line=1106) %%
%(label=lemma:2022-1018a) %%
\begin{lemma} \,\ 
\label{lemma:2022-1018a}
\begin{enumerate}
\item[{\rm 1)}] \,\ $\big[\widehat{R}^{(\pm)} \cdot 
{\rm ch}^{(\pm)}_{L(\Lambda^{(M) [K(m),m_2] ({\rm (I))}}_{k_1,k_2})}\big]
(\tau, z_1, z_2,t) \,\ =$
$$
\big(\widehat{R}^{(\pm)} \cdot {\rm ch}^{(\pm)}_{L(\Lambda^{[K(m), m_2]})}\big)
\Big(M\tau, \, 
z_1+k_1\tau, \, 
z_2-(k_1+k_2)\tau, \, 
\dfrac{1}{M}\big[t+(k_1+k_2)z_1-k_1z_2+k_1(k_1+k_2)\tau\big]\Big)
$$
% 2)
\item[{\rm 2)}] \,\ $\big[\widehat{R}^{\pm} \cdot 
{\rm ch}_{L(\Lambda^{(M) [K(m),m_2] ({\rm (II))}}_{k_1,k_2})}\big]
(\tau, z_1, z_2,t) \,\ =$
$$ \hspace{-5mm}
\big(R^{(\pm)} \cdot {\rm ch}^{(\pm)}_{L(\Lambda^{[K(m), m_2]})}\big)
\Big(M\tau, \, 
-z_1+k_1\tau, \, 
-z_2-(k_1+k_2)\tau, \, 
\dfrac{1}{M}\big[t-(k_1+k_2)z_1+k_1z_2+k_1(k_1+k_2)\tau\big]\Big)
$$
% 3)
\item[{\rm 3)}] \,\ $\big[\widehat{R}^{\pm} \cdot 
{\rm ch}_{L(\Lambda^{(M) [K(m),m_2] ({\rm (III))}}_{k_1,k_2})}\big]
(\tau, z_1, z_2,t) \,\ =$
$$ \hspace{-5mm}
\big(R^{(\pm)} \cdot {\rm ch}^{(\pm)}_{L(\Lambda^{[K(m), m_2]})}\big)
\Big(M\tau, \, -z_2+k_1\tau, \, 
-z_1-(k_1+k_2)\tau, \, 
\dfrac{1}{M}\big[t+k_1z_1-(k_1+k_2)z_2+k_1(k_1+k_2)\tau\big]\Big)
$$
% 4)
\item[{\rm 4)}] \,\ $\big[\widehat{R}^{\pm} \cdot 
{\rm ch}_{L(\Lambda^{(M) [K(m),m_2] ({\rm (IV))}}_{k_1,k_2})}\big]
(\tau, z_1, z_2,t) \,\ =$
$$
\big(R^{(\pm)} \cdot {\rm ch}^{(\pm)}_{L(\Lambda^{[K(m), m_2]})}\big)
\Big(M\tau, \, 
z_2+k_1\tau, \,  
z_1-(k_1+k_2)\tau, \, 
\dfrac{1}{M}\big[t-k_1z_1+(k_1+k_2)z_2+k_1(k_1+k_2)\tau\big]\Big)
$$
\end{enumerate}
\end{lemma}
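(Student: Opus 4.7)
The plan is to derive Lemma \ref{lemma:2022-1018a} as a direct specialization of the general principal admissible character formula recalled just above the lemma statement (Theorem 3.2 of \cite{KW1989}, equivalently Theorem 3.3.4 of \cite{W2001b}), applied to the four explicit simple subsets $\Pi^{(M),(\heartsuit)}_{k_1,k_2}$ with $\heartsuit =$ I, II, III, IV.

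First I would fix notation: for each $\heartsuit$, the principal admissible weight in question is $\lambda = \Lambda^{(M)[K(m),m_2](\heartsuit)}_{k_1,k_2}$, which by \eqref{n4:eqn:2022-1211c} equals $(t_\beta \overline{y}).(\lambda^0 - (M-1)(-m/M)\Lambda_0)$ with $\lambda^0 = \Lambda^{[K(m),m_2]}$ and with $(\overline{y},\beta)$ read off from \eqref{n4:eqn:2022-1203c}. Plugging into the displayed character formula
\[
\big[\widehat{R}^{(\pm)} \cdot {\rm ch}_{L(\lambda)}^{(\pm)}\big](\tau,z,t)
= \big[\widehat{R}^{(\pm)} \cdot {\rm ch}_{L(\lambda^0)}^{(\pm)}\big]\Big(M\tau,\, \overline{y}^{-1}(z+\tau\beta),\, \tfrac{1}{M}\big(t+(z|\beta)+\tfrac{\tau|\beta|^2}{2}\big)\Big)
\]
reduces everything to evaluating the numerator of the integrable character of $L(\Lambda^{[K(m),m_2]})$ at the transformed argument on the right.

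The next step is purely substitution: Lemma \ref{lemma:Vol.325p.74} already records, case by case, the explicit expressions for the triple $\big(M\tau,\, \overline{y}^{-1}(z+\tau\beta),\, \tfrac{1}{M}(t+(z|\beta)+\tfrac{\tau|\beta|^2}{2})\big)$ in the coordinates $(\tau, z_1, z_2, t)$ fixed by \eqref{n4:eqn:2022-1203d}. Inserting case (I) of Lemma \ref{lemma:Vol.325p.74} into the character formula yields precisely item 1) of the lemma; inserting cases (II), (III), (IV) yields items 2), 3), 4) respectively. No further identity is needed; in particular, one does not have to reopen the calculation of $|\beta|^2$ or of $(z|\beta)$, since Lemma \ref{lemma:Vol.325p.74} has already absorbed them into the last entry $\tfrac{1}{M}[t+\cdots+k_1(k_1+k_2)\tau]$.

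The only potential obstacle is purely bookkeeping: one must check that the coordinates used in Lemma \ref{lemma:Vol.325p.74} for $z \in \overline{\hhh}$ match the $(z_1,z_2)$ appearing on the left of Lemma \ref{lemma:2022-1018a} via \eqref{n4:eqn:2022-1203d}, and that the sign conventions for $\overline{y} \in \{1, r_{\alpha_2}, r_\theta, r_\theta r_{\alpha_2}\}$ act on $z$ as stated in Note \ref{note:Vol.325p.73}. Once this identification is made, the four items are just the four lines of Lemma \ref{lemma:Vol.325p.74} transcribed under the reduction formula, so the proof amounts to a one-line citation of \cite{KW1989}/\cite{W2001b} combined with Lemma \ref{lemma:Vol.325p.74}, with no residual computation.
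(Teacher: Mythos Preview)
Your proposal is correct and matches the paper's approach exactly: the paper does not give a separate proof of this lemma but simply states that it follows by inserting the explicit coordinates from Lemma~\ref{lemma:Vol.325p.74} into the general principal admissible character formula from \cite{KW1989}/\cite{W2001b} displayed immediately before. Your identification of the only bookkeeping step (matching the $(z_1,z_2)$ coordinates via \eqref{n4:eqn:2022-1203d}) is also what the paper is implicitly relying on.
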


\medskip

Using Lemma \ref{lemma:2022-1120a}, these formulas are rewritten as follows:

\medskip 

%(line=1162)  %%
%(label=n4:prop:2022-1212a) %%
\begin{prop} \,\
\label{n4:prop:2022-1212a}
\begin{enumerate}
\item[{\rm 1)}]  
\begin{enumerate}
\item[{\rm (i)}] \,\ $\big[\widehat{R}^{(+)} \cdot 
{\rm ch}^{(+)}_{L(\Lambda^{(M) [K(m),m_2] {\rm (I))}}_{k_1,k_2})}\big]
(\tau, z_1, z_2,t)
\,\ = \,\ 
(-1)^{m_2+1}
e^{-\frac{2\pi im}{M}[t+(k_1+k_2)z_1-k_1z_2]}$
$$
\times \,\ 
q^{-\frac{m}{M}k_1(k_1+k_2)} \, 
\Phi^{(A(1|1))[m,m_2+1]}
(M\tau, \,\ z_1+k_1\tau+\tfrac12, \,\ z_2-(k_1+k_2)\tau+\tfrac12, \,\ 0)
$$
% 1) (ii)
\item[{\rm (ii)}] \,\ $\big[\widehat{R}^{(+)} \cdot 
{\rm ch}^{(+)}_{L(\Lambda^{(M)[K(m), m_2]{\rm (II)}}_{k_1,k_2})}\big]
(\tau, z_1, z_2,t) 
\,\ = \,\ 
(-1)^{m_2+1}
e^{-\frac{2\pi im}{M}[t-(k_1+k_2)z_1+k_1z_2]} $
$$
\times \,\ 
q^{-\frac{m}{M}k_1(k_1+k_2)} \, 
\Phi^{(A(1|1))[m,m_2+1]}
(M\tau, \,\ -z_1+k_1\tau+\tfrac12, \,\ -z_2-(k_1+k_2)\tau+\tfrac12, \,\ 0)
$$
% 1) (iii)
\item[{\rm (iii)}] \,\ $\big[\widehat{R}^{(+)} \cdot 
{\rm ch}^{(+)}_{L(\Lambda^{(M)[K(m), m_2]{\rm (III)}}_{k_1,k_2})}\big]
(\tau, z_1, z_2,t) 
\,\ = \,\ 
(-1)^{m_2+1}
e^{-\frac{2\pi im}{M}[t+k_1z_1-(k_1+k_2)z_2]} $
$$
\times \,\ 
q^{-\frac{m}{M}k_1(k_1+k_2)} \, 
\Phi^{(A(1|1))[m,m_2+1]}
(M\tau, \,\ -z_2+k_1\tau+\tfrac12, \,\ -z_1-(k_1+k_2)\tau+\tfrac12, \,\ 0)
$$
% 1) (iv)
\item[{\rm (iv)}] \,\ $\big[\widehat{R}^{(+)} \cdot 
{\rm ch}^{(+)}_{L(\Lambda^{(M)[K(m), m_2]{\rm (IV)}}_{k_1,k_2})}\big]
(\tau, z_1, z_2,t) 
\,\ = \,\ 
(-1)^{m_2+1}
e^{-\frac{2\pi im}{M}[t-k_1z_1+(k_1+k_2)z_2]} $
$$
\times \,\ 
q^{-\frac{m}{M}k_1(k_1+k_2)} \, 
\Phi^{(A(1|1))[m,m_2+1]}
(M\tau, \,\ z_2+k_1\tau+\tfrac12, \,\ z_1-(k_1+k_2)\tau+\tfrac12, \,\ 0)
$$
\end{enumerate}
% 2)
\item[{\rm 2)}]  
\begin{enumerate}
\item[{\rm (i)}] \,\ $\big[\widehat{R}^{(-)} \cdot 
{\rm ch}^{(-)}_{L(\Lambda^{(M) [K(m),m_2] {\rm (I))}}_{k_1,k_2})}\big]
(\tau, z_1, z_2,t) \,\ = \,\ 
e^{-\frac{2\pi im}{M}[t+(k_1+k_2)z_1-k_1z_2]} $
$$
\times \,\ 
q^{-\frac{m}{M}k_1(k_1+k_2)} \, 
\Phi^{(A(1|1))[m,m_2+1]}
(M\tau, \, z_1+k_1\tau, \, z_2-(k_1+k_2)\tau, \, 0)
$$
% 2) (ii)
\item[{\rm (ii)}] \,\ $\big[\widehat{R}^{(-)} \cdot 
{\rm ch}^{(-)}_{L(\Lambda^{(M)[K(m), m_2]{\rm (II)}}_{k_1,k_2})}\big]
(\tau, z_1, z_2,t) \,\ = \,\ 
e^{-\frac{2\pi im}{M}[t-(k_1+k_2)z_1+k_1z_2]} $
$$
\times \,\ 
q^{-\frac{m}{M}k_1(k_1+k_2)} \, 
\Phi^{(A(1|1))[m,m_2+1]}
(M\tau, \, -z_1+k_1\tau, \, -z_2-(k_1+k_2)\tau, \, 0)
$$
% 2) (iii)
\item[{\rm (iii)}] \,\ $\big[\widehat{R}^{(-)} \cdot 
{\rm ch}^{(-)}_{L(\Lambda^{(M)[K(m), m_2]{\rm (III)}}_{k_1,k_2})}\big]
(\tau, z_1, z_2,t) \,\ = \,\ 
e^{-\frac{2\pi im}{M}[t+k_1z_1-(k_1+k_2)z_2]} $
$$
\times \,\ 
q^{-\frac{m}{M}k_1(k_1+k_2)} \, 
\Phi^{(A(1|1))[m,m_2+1]}
(M\tau, \, -z_2+k_1\tau, \, -z_1-(k_1+k_2)\tau, \, 0)
$$
% 2) (iv)
\item[{\rm (iv)}] \,\ $\big[\widehat{R}^{(-)} \cdot 
{\rm ch}^{(-)}_{L(\Lambda^{(M)[K(m), m_2]{\rm (IV)}}_{k_1,k_2})}\big]
(\tau, z_1, z_2,t) \,\ = \,\
e^{-\frac{2\pi im}{M}[t-k_1z_1+(k_1+k_2)z_2]} $
$$
\times \,\ 
q^{-\frac{m}{M}k_1(k_1+k_2)} \, 
\Phi^{(A(1|1))[m,m_2+1]}
(M\tau, \, z_2+k_1\tau, \, z_1-(k_1+k_2)\tau, \, 0)
$$
\end{enumerate}
\end{enumerate}
\end{prop}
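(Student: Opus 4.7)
The plan is to chain together the two immediately preceding lemmas and then perform a single cosmetic rewriting of the last argument of $\Phi^{(A(1|1))[m,m_2+1]}$. Concretely, Lemma \ref{lemma:2022-1018a} expresses each $\widehat{R}^{(\pm)}\cdot {\rm ch}^{(\pm)}_{L(\Lambda^{(M)[K(m),m_2](\heartsuit)}_{k_1,k_2})}$ as the integrable numerator $\widehat{R}^{(\pm)}\cdot {\rm ch}^{(\pm)}_{L(\Lambda^{[K(m),m_2]})}$ evaluated at the shifted argument $\bigl(M\tau,\overline{y}^{-1}(z+\tau\beta),\tfrac{1}{M}(t+(z|\beta)+\tfrac{\tau|\beta|^2}{2})\bigr)$, and Lemma \ref{lemma:2022-1120a} identifies that integrable numerator with $(-1)^{m_2+1}\Phi^{(A(1|1))[m,m_2+1]}(\tau,z_1+\tfrac12,z_2+\tfrac12,t)$ in the $(+)$ case and with $\Phi^{(A(1|1))[m,m_2+1]}(\tau,z_1,z_2,t)$ in the $(-)$ case. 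Substituting the former into the latter immediately yields the four formulas, modulo isolating a $t$-independent normalization of the $\Phi^{(A(1|1))[m,m_2+1]}$ factor.

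The key observation that makes this cosmetic step trivial is that the defining series \eqref{eqn:2022-1015a1}--\eqref{eqn:2022-1015a3} show that $\Phi^{(A(1|1))[m,s]}(\tau,z_1,z_2,t)$ depends on the fourth variable \emph{only} through the overall factor $e^{-2\pi imt}$. Hence for any scalar $t_0$ one has $\Phi^{(A(1|1))[m,s]}(\tau,z_1,z_2,t_0)=e^{-2\pi imt_0}\Phi^{(A(1|1))[m,s]}(\tau,z_1,z_2,0)$, and in particular we may always normalize the fourth argument to $0$ by peeling off the corresponding exponential.

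To illustrate in case (I): after the two substitutions the fourth argument becomes $t_0=\tfrac{1}{M}\bigl[t+(k_1+k_2)z_1-k_1z_2+k_1(k_1+k_2)\tau\bigr]$, and pulling out $e^{-2\pi imt_0}$ produces precisely the prefactor $e^{-\frac{2\pi im}{M}[t+(k_1+k_2)z_1-k_1z_2]}\,q^{-\frac{m}{M}k_1(k_1+k_2)}$ claimed in part 1)(i) and 2)(i). The remaining three cases are completely analogous: the spatial arguments $\overline{y}^{-1}(z+\tau\beta)$ are read off from Lemma \ref{lemma:Vol.325p.74}, and the exponential prefactor is obtained by separating the $t$-independent piece $\tfrac{1}{M}k_1(k_1+k_2)\tau$ of $t_0$ from the rest to produce the universal $q^{-\frac{m}{M}k_1(k_1+k_2)}$ factor.

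There is no substantive obstacle: the proof is a mechanical substitution together with the one-line remark about the $t$-dependence of $\Phi^{(A(1|1))[m,s]}$. The only bookkeeping to watch is the sign $(-1)^{m_2+1}$, which appears in the $(+)$ formulas and not in the $(-)$ ones (since the $\tfrac12$ shift in the spatial arguments is present only in the $(+)$ case of Lemma \ref{lemma:2022-1120a}), and the accurate identification of $(\overline{y},\beta)$ from \eqref{n4:eqn:2022-1203c} when writing the shifted arguments in each of the four types $\heartsuit=$ I, II, III, IV.
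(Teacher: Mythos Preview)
Your proof is correct and follows exactly the approach of the paper: the paper simply states that Proposition~\ref{n4:prop:2022-1212a} is obtained by rewriting the formulas of Lemma~\ref{lemma:2022-1018a} using Lemma~\ref{lemma:2022-1120a}, leaving the extraction of the $e^{-2\pi imt_0}$ factor (and hence the $q^{-\frac{m}{M}k_1(k_1+k_2)}$ prefactor) implicit. Your explicit remark that $\Phi^{(A(1|1))[m,s]}$ depends on its fourth argument only through $e^{-2\pi imt}$ is precisely the unstated step needed to pass from Lemma~\ref{lemma:2022-1018a} to the normalized form with last argument $0$.
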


\subsection{Characters twisted by $r_{\alpha_2}t_{\frac12 \alpha_2}$}
\label{subse:twisted:characters}
%(label=subse:twisted:characters) %%
%(line=1276) %%

%\medskip %%

In this section, we consider the $\widehat{A}(1,1)$-characters 
twisted by $w_0 \, := \, r_{\alpha_2}t_{\frac12 \alpha_2}$.
The action of $w_0$ on $\hhh$ is given by
%(label=n4:eqn:2022-1230a) %%
%(label=eqn:Vol.324p.109) %%
\begin{equation}
\left\{
\begin{array}{lcl}
w_0(\alpha_0) &=& \alpha_0 \\[1mm]
w_0(\alpha_1) &=& \alpha_1+\alpha_2-\frac12 \delta \\[1mm]
w_0(\alpha_2) &=& -\alpha_2+\delta \\[1mm]
w_0(\alpha_3) &=& \alpha_2+\alpha_3-\frac12 \delta
\end{array}\right. 
\quad \text{and} \quad 
w_0(\Lambda_0) \,\ = \,\ 
\Lambda_0 \, - \, \dfrac12 \, \alpha_2 \, + \, \dfrac14 \, \delta
\label{n4:eqn:2022-1230a}
\end{equation}
so
\begin{equation}
w_0(\tau, z_1, z_2,t) \,\ = \,\ \Big(\tau, \,\ 
-z_2-\frac{\tau}{2}, \,\ -z_1-\frac{\tau}{2}, \,\ 
t-\frac{z_1+z_2}{2}-\frac{\tau}{4}\Big)
\label{eqn:Vol.324p.109}
\end{equation}
Then the twisted characters 
%(label=n4:eqn:2022-1212a) %%
\begin{equation}
{\rm ch}^{(\pm) \, {\rm tw}}_{L(\Lambda^{(M)[K(m), m_2](\heartsuit)}_{k_1,k_2})}
(\tau, z_1,z_2,t) \, := \, 
{\rm ch}^{(\pm)}_{L(\Lambda^{(M)[K(m), m_2](\heartsuit)}_{k_1,k_2})}
\big(w_0 (\tau, z_1,z_2,t)\big)
\label{n4:eqn:2022-1212a}
\end{equation}
of the principal admissible $\widehat{A}(1,1)$-modules 
$L(\Lambda^{(M)[K(m), m_2](\heartsuit)}_{k_1,k_2})$ are given 
as follows:

\medskip

%(line=1322) %%
%(label=n4:prop:2022-1212b) %%
\begin{prop} \,\
\label{n4:prop:2022-1212b}
\begin{enumerate}
\item[{\rm 1)}]  
\begin{enumerate}
\item[{\rm (i)}] \,\ $\big[\widehat{R}^{(+){\rm tw}} \cdot 
{\rm ch}^{(+){\rm tw}}_{L(\Lambda^{(M)[K(m), m_2]{\rm (I)}}_{k_1,k_2})}\big]
(\tau, z_1, z_2,t)$
{\allowdisplaybreaks
\begin{eqnarray*}
&=& (-1)^{m_2+1}
e^{-\frac{2\pi im}{M}t} \, 
e^{\frac{2\pi im}{M}[-(k_1-\frac12)z_1+(k_1+k_2+\frac12)z_2]} \, 
q^{-\frac{m}{M}(k_1-\frac12)(k_1+k_2+\frac12)} 
\\[2mm]
& & 
\times \,\ \Phi^{(A(1|1))[m,m_2+1]}
(M\tau, \,\ -z_2+(k_1-\tfrac12)\tau+\tfrac12, \,\ 
-z_1-(k_1+k_2+\tfrac12)\tau+\tfrac12, \,\ 0)
\end{eqnarray*}}
% 1) (ii)
\item[{\rm (ii)}] \,\ $\big[\widehat{R}^{(+){\rm tw}} \cdot 
{\rm ch}^{(+){\rm tw}}_{L(\Lambda^{(M)[K(m), m_2]{\rm (II)}}_{k_1,k_2})}
\big](\tau, z_1, z_2,t)$
{\allowdisplaybreaks
\begin{eqnarray*}
&=&(-1)^{m_2+1}
e^{-\frac{2\pi im}{M}t} \, 
e^{\frac{2\pi im}{M}[(k_1+\frac12)z_1-(k_1+k_2-\frac12)z_2]} \, 
q^{-\frac{m}{M}(k_1+\frac12)(k_1+k_2-\frac12)} \, 
\\[2mm]
& &
\times \,\ \Phi^{(A(1|1))[m,m_2+1]}
(M\tau, \,\ z_2+(k_1+\tfrac12)\tau+\tfrac12, \,\ 
z_1-(k_1+k_2-\tfrac12)\tau+\tfrac12, \,\ 0)
\end{eqnarray*}}
% 1) (iii)
\item[{\rm (iii)}] \,\ $\big[\widehat{R}^{(+){\rm tw}} \cdot 
{\rm ch}^{(+){\rm tw}}_{L(\Lambda^{(M)[K(m), m_2]{\rm (III)}}_{k_1,k_2})}
\big](\tau, z_1, z_2,t)$
{\allowdisplaybreaks
\begin{eqnarray*}
&=&(-1)^{m_2+1}
e^{-\frac{2\pi im}{M}t} \, 
e^{\frac{2\pi im}{M}[-(k_1+k_2-\frac12)z_1+(k_1+\frac12)z_2]} \, 
q^{-\frac{m}{M}(k_1+\frac12)(k_1+k_2-\frac12)}
\\[2mm]
& &
\times \,\ \Phi^{(A(1|1))[m,m_2+1]}
(M\tau, \,\ z_1+(k_1+\tfrac12)\tau+\tfrac12, \,\ 
z_2-(k_1+k_2-\tfrac12)\tau+\tfrac12, \,\ 0)
\end{eqnarray*}}
% 1) (iv)
\item[{\rm (iv)}] \,\ $\big[\widehat{R}^{(+){\rm tw}} \cdot 
{\rm ch}^{(+){\rm tw}}_{L(\Lambda^{(M)[K(m), m_2]{\rm (IV)}}_{k_1,k_2})}
\big](\tau, z_1, z_2,t)$
{\allowdisplaybreaks
\begin{eqnarray*}
&=&(-1)^{m_2+1}
e^{-\frac{2\pi im}{M}t} \, 
e^{\frac{2\pi im}{M}[(k_1+k_2+\frac12)z_1-(k_1-\frac12)z_2]} \, 
q^{-\frac{m}{M}(k_1-\frac12)(k_1+k_2+\frac12)}
\\[2mm]
& &
\times \,\ \Phi^{(A(1|1))[m,m_2+1]}
(M\tau, \,\ -z_1+(k_1-\tfrac12)\tau+\tfrac12, \,\ 
-z_2-(k_1+k_2+\tfrac12)\tau+\tfrac12, \,\ 0)
\end{eqnarray*}}
\end{enumerate}
% 2)
\item[{\rm 2)}]  
\begin{enumerate}
\item[{\rm (i)}] \,\ $\big[\widehat{R}^{(-){\rm tw}} \cdot 
{\rm ch}^{(-){\rm tw}}_{L(\Lambda^{(M)[K(m), m_2]{\rm (I)}}_{k_1,k_2})}\big]
(\tau, z_1, z_2,t)$
{\allowdisplaybreaks
\begin{eqnarray*}
&=&
e^{-\frac{2\pi im}{M}t} \, 
e^{\frac{2\pi im}{M}[-(k_1-\frac12)z_1+(k_1+k_2+\frac12)z_2]} \, 
q^{-\frac{m}{M}(k_1-\frac12)(k_1+k_2+\frac12)} 
\\[2mm]
& & 
\times \,\ \Phi^{(A(1|1))[m,m_2+1]}
(M\tau, \,\ -z_2+(k_1-\tfrac12)\tau, \,\ -z_1-(k_1+k_2+\tfrac12)\tau, \,\ 0)
\end{eqnarray*}}
% 2) (ii)
\item[{\rm (ii)}] \,\ $\big[\widehat{R}^{(-){\rm tw}} \cdot 
{\rm ch}^{(-){\rm tw}}_{L(\Lambda^{(M)[K(m), m_2]{\rm (II)}}_{k_1,k_2})}
\big](\tau, z_1, z_2,t)$
{\allowdisplaybreaks
\begin{eqnarray*}
&=&
e^{-\frac{2\pi im}{M}t} \, 
e^{\frac{2\pi im}{M}[(k_1+\frac12)z_1-(k_1+k_2-\frac12)z_2]} \, 
q^{-\frac{m}{M}(k_1+\frac12)(k_1+k_2-\frac12)} \, 
\\[2mm]
& &
\times \,\ \Phi^{(A(1|1))[m,m_2+1]}
(M\tau, \,\ z_2+(k_1+\tfrac12)\tau, \,\ z_1-(k_1+k_2-\tfrac12)\tau, \,\ 0)
\end{eqnarray*}}
% 2) (iii)
\item[{\rm (iii)}] \,\ $\big[\widehat{R}^{(-){\rm tw}} \cdot 
{\rm ch}^{(-){\rm tw}}_{L(\Lambda^{(M)[K(m), m_2]{\rm (III)}}_{k_1,k_2})}
\big](\tau, z_1, z_2,t)$
{\allowdisplaybreaks
\begin{eqnarray*}
&=&
e^{-\frac{2\pi im}{M}t} \, 
e^{\frac{2\pi im}{M}[-(k_1+k_2-\frac12)z_1+(k_1+\frac12)z_2]} \, 
q^{-\frac{m}{M}(k_1+\frac12)(k_1+k_2-\frac12)}
\\[2mm]
& &
\times \,\ \Phi^{(A(1|1))[m,m_2+1]}
(M\tau, \,\ z_1+(k_1+\tfrac12)\tau, \,\ z_2-(k_1+k_2-\tfrac12)\tau, \,\ 0)
\end{eqnarray*}}
% 2) (iv)
\item[{\rm (iv)}] \,\ $\big[\widehat{R}^{(-){\rm tw}} \cdot 
{\rm ch}^{(-){\rm tw}}_{L(\Lambda^{(M)[K(m), m_2]{\rm (IV)}}_{k_1,k_2})}
\big](\tau, z_1, z_2,t)$
{\allowdisplaybreaks
\begin{eqnarray*}
&=&
e^{-\frac{2\pi im}{M}t} \, 
e^{\frac{2\pi im}{M}[(k_1+k_2+\frac12)z_1-(k_1-\frac12)z_2]} \, 
q^{-\frac{m}{M}(k_1-\frac12)(k_1+k_2+\frac12)}
\\[2mm]
& &
\times \,\ \Phi^{(A(1|1))[m,m_2+1]}
(M\tau, \,\ -z_1+(k_1-\tfrac12)\tau, \,\ -z_2-(k_1+k_2+\tfrac12)\tau, \,\ 0)
\end{eqnarray*}}
\end{enumerate}
\end{enumerate}
\end{prop}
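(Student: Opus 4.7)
The plan is to apply the definition \eqref{n4:eqn:2022-1212a} of the twisted character directly: since both $\widehat{R}^{(\pm)}$ and ${\rm ch}^{(\pm)}$ are pulled back by $w_0$, one has
\[
\big[\widehat{R}^{(\pm){\rm tw}} \cdot {\rm ch}^{(\pm){\rm tw}}_{L(\Lambda)}\big](\tau,z_1,z_2,t)
\;=\;
\big[\widehat{R}^{(\pm)} \cdot {\rm ch}^{(\pm)}_{L(\Lambda)}\big]\!\big(w_0(\tau,z_1,z_2,t)\big)
\]
for $\Lambda = \Lambda^{(M)[K(m),m_2](\heartsuit)}_{k_1,k_2}$. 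Hence the entire Proposition reduces to substituting the explicit formula
$w_0(\tau,z_1,z_2,t) = (\tau,\, -z_2-\tfrac{\tau}{2},\, -z_1-\tfrac{\tau}{2},\, t-\tfrac{z_1+z_2}{2}-\tfrac{\tau}{4})$
from \eqref{eqn:Vol.324p.109} into each of the eight formulas of Proposition \ref{n4:prop:2022-1212a} and cleaning up.

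I would first carry out case 1(i) in full and then observe that the other seven are entirely analogous. For (I)(i), the third and fourth arguments of $\Phi^{(A(1|1))[m,m_2+1]}$ in Proposition \ref{n4:prop:2022-1212a}(1)(i) become
\[
-z_2-\tfrac{\tau}{2}+k_1\tau+\tfrac12 \;=\; -z_2+(k_1-\tfrac12)\tau+\tfrac12,
\qquad
-z_1-\tfrac{\tau}{2}-(k_1+k_2)\tau+\tfrac12 \;=\; -z_1-(k_1+k_2+\tfrac12)\tau+\tfrac12,
\]
which matches the stated argument. For the prefactor, the linear form in the exponent becomes
\[
\bigl(t-\tfrac{z_1+z_2}{2}-\tfrac{\tau}{4}\bigr) + (k_1+k_2)\bigl(-z_2-\tfrac{\tau}{2}\bigr) - k_1\bigl(-z_1-\tfrac{\tau}{2}\bigr)
\;=\; t+(k_1-\tfrac12)z_1-(k_1+k_2+\tfrac12)z_2 - \bigl(\tfrac14+\tfrac{k_2}{2}\bigr)\tau.
\]
Collecting the $\tau$-contribution with the original $q^{-(m/M)k_1(k_1+k_2)}$ and invoking the elementary identity
\[
\bigl(k_1-\tfrac12\bigr)\bigl(k_1+k_2+\tfrac12\bigr) \;=\; k_1(k_1+k_2) - \tfrac{k_2}{2}-\tfrac14
\]
yields exactly the claimed prefactor $(-1)^{m_2+1} e^{-\frac{2\pi im}{M}t}\, e^{\frac{2\pi im}{M}[-(k_1-\frac12)z_1+(k_1+k_2+\frac12)z_2]}\, q^{-\frac{m}{M}(k_1-\frac12)(k_1+k_2+\frac12)}$.

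The remaining cases (II), (III), (IV) and the super-character parts 2(i)--2(iv) are handled by the same substitution; the only differences are which pair of $\{\pm z_1, \pm z_2\}$ sits inside $\Phi^{(A(1|1))[m,m_2+1]}$ and whether the $+\tfrac12$ shifts are present (they disappear in the super-character versions). In every case an analogue of the quadratic identity above, of the form $(k_1\pm\tfrac12)(k_1+k_2\mp\tfrac12) = k_1(k_1+k_2)\pm\tfrac{k_2-1}{2}\mp\tfrac12\cdot\tfrac12$, absorbs the $\tau/4$ issuing from the fourth coordinate of $w_0$ and the $\tau/2$ issuing from each of the second and third coordinates. The main obstacle is purely bookkeeping: keeping track consistently of signs and of which shift $\pm\tfrac12$ is absorbed into which $k_i$-coefficient in each of the four admissible simple subsets; no new conceptual input beyond Proposition \ref{n4:prop:2022-1212a} and the transformation law \eqref{eqn:Vol.324p.109} is needed.
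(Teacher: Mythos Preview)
Your proposal is correct and follows essentially the same route as the paper's own proof: both reduce the Proposition to the definition \eqref{n4:eqn:2022-1212a}, substitute the explicit action of $w_0$ from \eqref{eqn:Vol.324p.109} into the formulas of Proposition~\ref{n4:prop:2022-1212a}, work out case 1)(i) in detail via the same quadratic identity $(k_1-\tfrac12)(k_1+k_2+\tfrac12)=k_1(k_1+k_2)-\tfrac{k_2}{2}-\tfrac14$, and declare the remaining seven cases analogous.
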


\begin{proof} These formulas are obtained easily from 
\eqref{n4:eqn:2022-1212a} and Proposition \ref{n4:prop:2022-1212a}.
In the case 1) (i), its calculation is as follows:
%(line=1578) %%
{\allowdisplaybreaks
\begin{eqnarray*}
& & \hspace{-10mm}
\big[\widehat{R}^{(+){\rm tw}} \cdot 
{\rm ch}^{(+){\rm tw}}_{L(\Lambda^{(M)[K(m), m_2]{\rm (I)}}_{k_1,k_2})}
\big](\tau, z_1, z_2,t)
\, = \, 
\big[\widehat{R}^{(+)} \cdot 
{\rm ch}^{(+)}_{L(\Lambda^{(M)[K(m), m_2]{\rm (I)}}_{k_1,k_2})}\big]
\big(w_0(\tau, z_1, z_2,t)\big)
\\[2mm]
&=&
\big[\widehat{R}^{(+)} \cdot 
{\rm ch}^{(+)}_{L(\Lambda^{(M)[K(m), m_2]{\rm (I)}}_{k_1,k_2})}\big]
\Big(\tau, \, -z_2-\frac{\tau}{2}, \, -z_1-\frac{\tau}{2}, \, 
t-\frac{z_1+z_2}{2}-\frac{\tau}{4}\Big)
\\[2mm]
&=& (-1)^{m_2+1}
e^{-\frac{2\pi im}{M}[t-\frac{\tau}{4}-\frac{z_1+z_2}{2}+
(k_1+k_2)(-z_2-\frac{\tau}{2})-k_1(-z_1-\frac{\tau}{2})]} \, 
q^{-\frac{m}{M}k_1(k_1+k_2)} \, 
\\[2mm]
& & 
\times \,\ \Phi^{(A(1|1))[m,m_2+1]}
(M\tau, \,\ -z_2-\tfrac{\tau}{2}+k_1\tau+\tfrac12, \,\ 
-z_1-\tfrac{\tau}{2}-(k_1+k_2)\tau+\tfrac12, \,\ 0)
\\[2mm]
&=& (-1)^{m_2+1} \, 
e^{-\frac{2\pi im}{M}t} \, 
e^{\frac{2\pi im}{M}[-(k_1-\frac12)z_1+(k_1+k_2+\frac12)z_2]} \, 
\underbrace{
e^{\frac{2\pi im}{M} \cdot \frac{\tau}{4}} \, 
e^{\frac{\pi im}{M} k_2\tau} \, 
q^{-\frac{m}{M}k_1(k_1+k_2)}}_{\substack{|| \\[0mm] {\displaystyle 
q^{-\frac{m}{M}(k_1-\frac12)(k_1+k_2+\frac12)}
}}} \, 
\\[0mm]
& & 
\times \,\ \Phi^{(A(1|1))[m,m_2+1]}
(M\tau, \,\ -z_2+(k_1-\tfrac12)\tau+\tfrac12, \,\ 
-z_1-(k_1+k_2+\tfrac12)\tau+\tfrac12, \,\ 0)
\end{eqnarray*}}
proving 1) (i). The proof of the rests is quite similar.
\end{proof}

\section{Characters of quantum Hamiltonian reduction}
\label{sec:quantum:character}
%(label=sec:quantum:character) %%
%(line=1631) %%

We now consider the quantum Hamiltonian reduction associated to
the pair $(x=\frac12 \theta, \, f=e_{-\theta})$, where 
$\theta=\alpha_1+\alpha_2+\alpha_3$ is the highest root of the 
finite-dimensional Lie superalgebra $A(1,1)$. Taking a basis
$J_0=\alpha_2^{\vee}=-\alpha_2$ of $\overline{\hhh}^f$, we have 
%(label=n4:eqn:2022-1212b) %%
{\allowdisplaybreaks
\begin{eqnarray}
& &
2\pi i \, \Big\{-\tau \Lambda_0
\, - \, \tau x
\, + \, zJ_0 \, +\, \frac{\tau}{2}(x|x)\delta\Big\}
\nonumber
\\[1mm]
&=&
2\pi i \, \Big\{-\tau \Lambda_0
\, - \, \frac{\tau}{2} (\alpha_1+\alpha_3)
\, - \, \Big(z+\frac{\tau}{2}\Big)\alpha_2
\, + \, \frac{\tau}{4} \, \delta\Big\}
\nonumber
\\[1mm]
&=&
\Big(\tau, \,\ z+\frac{\tau}{2}, \,\ z-\frac{\tau}{2}, \,\
\frac{\tau}{4}\Big)
\label{n4:eqn:2022-1212b}
\end{eqnarray}}
Then the (super)characters of the quantum Hamiltonian reduction 
$H(\lambda)$ of $\widehat{A}(1,1)$-module $L(\lambda)$ 
and its twisted module $H^{\rm tw}(\lambda)$
are obtained by the formulas 
%(label=n4:eqn:2022-1212c) %%
%(label=n4:eqn:2022-1212c2) %%
\begin{subequations}
{\allowdisplaybreaks
\begin{eqnarray}
& & \hspace{-10mm}
\big[\overset{N=4}{R}{}^{(\pm)} \cdot {\rm ch}_{H(\lambda)}^{(\pm)}\big]
(\tau,z) 
\,\ = \,\ 
\big[\widehat{R}^{(\pm)} \cdot {\rm ch}_{L(\lambda)}^{(\pm)}\big]
\Big(2\pi i \, \Big\{-\tau \Lambda_0
- \tau x + zJ_0 + \frac{\tau}{2}(x|x)\delta\Big\}\Big)
\nonumber
\\[2mm]
&=&
\big[\widehat{R}^{(\pm)} \cdot {\rm ch}_{L(\lambda)}^{(\pm)}\big]
\Big(\tau, \,\ z+\frac{\tau}{2}, \,\ z-\frac{\tau}{2}, \,\
\frac{\tau}{4}\Big)
\label{n4:eqn:2022-1212c}
\\[2mm]
& & \hspace{-10mm}
\big[\overset{N=4}{R}{}^{(\pm){\rm tw}} \cdot 
{\rm ch}_{H(\lambda)}^{(\pm){\rm tw}}\big]
(\tau,z) 
\,\ = \,\ 
\big[\widehat{R}^{(\pm){\rm tw}} \cdot 
{\rm ch}_{L(\lambda)}^{(\pm){\rm tw}}\big]
\Big(2\pi i \, \Big\{-\tau \Lambda_0
- \tau x + zJ_0 + \frac{\tau}{2}(x|x)\delta\Big\}\Big)
\nonumber
\\[2mm]
&=&
\big[\widehat{R}^{(\pm){\rm tw}} \cdot 
{\rm ch}_{L(\lambda)}^{(\pm){\rm tw}}\big]
\Big(\tau, \,\ z+\frac{\tau}{2}, \,\ z-\frac{\tau}{2}, \,\
\frac{\tau}{4}\Big)
\label{n4:eqn:2022-1212c2}
\end{eqnarray}}
\end{subequations}
where $\overset{N=4}{R}{}^{(+)}$ (resp.$\overset{N=4}{R}{}^{(-)}$)
is the denominator (resp. super-denominator) and 
$\overset{N=4}{R}{}^{(+) {\rm tw}}$ (resp. $\overset{N=4}{R}{}^{(-) {\rm tw}}$)
is the twisted denominator (resp. twisted super-denominator) of 
the N=4 superconformal algebra. These denominators are denoted also by 
$\overset{N=4}{R}{}^{(\varepsilon)}_{\varepsilon'}$ $(
\varepsilon, \, \varepsilon' \, \in \, \{0, \frac12\})$, by putting 
$\overset{N=4}{R}{}^{(\frac12)}_{\frac12} := \overset{N=4}{R}{}^{(+)}$, 
$\overset{N=4}{R}{}^{(0)}_{\frac12} := \overset{N=4}{R}{}^{(-)}$ and  
$\overset{N=4}{R}{}^{(\frac12)}_{0} := \overset{N=4}{R}{}^{(+) {\rm tw}}$, 
$\overset{N=4}{R}{}^{(0)}_{0} := \overset{N=4}{R}{}^{(-) {\rm tw}}$,
and they are written as follows:
%(label=n4:eqn:2022-1210b) %%
%(line=1695) %%
{\allowdisplaybreaks
\begin{eqnarray}
\overset{N=4}{R}{}^{(\varepsilon)}_{\varepsilon'}(\tau, z) 
&:=& 
(-1)^{2\varepsilon} \, i \,\ 
\eta(\tau)^3 \, \frac{
\vartheta_{11}(\tau, 2z)}{
\vartheta_{1-2\varepsilon', \, 1-2\varepsilon}(\tau,z)^2}
\nonumber
\\[2mm]
&=&
(-1)^{2\varepsilon} \, i \,\ 
\frac{\vartheta_{00}(\tau, z) \, \vartheta_{01}(\tau, z) \, 
\vartheta_{10}(\tau, z) \, \vartheta_{11}(\tau, z)
}{\vartheta_{1-2\varepsilon', \, 1-2\varepsilon}(\tau, \, z)^2}
\label{n4:eqn:2022-1210b}
\end{eqnarray}}
namely,
\begin{subequations}
{\allowdisplaybreaks
\begin{eqnarray}
\overset{N=4}{R}{}^{(+)}(\tau,z) &=& - \, i \,\ 
\eta(\tau)^3 \, 
\frac{\vartheta_{11}(\tau, \, 2z)}{\vartheta_{00}(\tau, z)^2}
\,\ = \,\ 
- \, i \,\ \frac{
\vartheta_{01}(\tau, z) \, \vartheta_{10}(\tau, z) \, \vartheta_{11}(\tau, z)
}{\vartheta_{00}(\tau, z)}
\label{n4:eqn:2022-1219c1}
\\[2mm]
\overset{N=4}{R}{}^{(-)}(\tau,z) &=& i \,
\eta(\tau)^3 \, 
\frac{\vartheta_{11}(\tau, \, 2z)}{\vartheta_{01}(\tau, z)^2}
\,\ = \,\ 
i \,\ \frac{
\vartheta_{00}(\tau, z) \, \vartheta_{10}(\tau, z) \, \vartheta_{11}(\tau, z)
}{\vartheta_{01}(\tau, z)}
\label{n4:eqn:2022-1219c2}
\\[2mm]
\overset{N=4}{R}{}^{(+) \, {\rm tw}}(\tau,z) &=& - \, i \, 
\eta(\tau)^3 \, 
\frac{\vartheta_{11}(\tau, \, 2z)}{\vartheta_{10}(\tau, z)^2}
\,\ = \,\ 
- \, i \,\ \frac{
\vartheta_{00}(\tau, z) \, \vartheta_{01}(\tau, z) \, \vartheta_{11}(\tau, z)
}{\vartheta_{10}(\tau, z)}
\label{n4:eqn:2022-1219c3}
\\[2mm]
\overset{N=4}{R}{}^{(-) \, {\rm tw}}(\tau,z) &=& i \, 
\eta(\tau)^3 \, 
\dfrac{\vartheta_{11}(\tau, \, 2z)}{\vartheta_{11}(\tau, z)^2}
\,\ = \,\ 
i \,\ \frac{
\vartheta_{00}(\tau, z) \, \vartheta_{01}(\tau, z) \, \vartheta_{10}(\tau, z)
}{\vartheta_{11}(\tau, z)}
\label{n4:eqn:2022-1219c4}
\end{eqnarray}}
\end{subequations}
The modular transformation properties of these denominators are given
by the following formulas:
%(label=n4:eqn:2022-1210c1) %%
%(label=n4:eqn:2022-1210c2) %%
\begin{subequations}
{\allowdisplaybreaks
\begin{eqnarray}
\overset{\rm N=4}{R}{}^{(\varepsilon)}_{\varepsilon'}
\Big(-\frac{1}{\tau}, \, \frac{z}{\tau}\Big) 
&=&
(-1)^{4\varepsilon \varepsilon'} \, \tau \, 
e^{\frac{2\pi iz^2}{\tau}} \, 
\overset{\rm N=4}{R}{}^{(\varepsilon')}_{\varepsilon}(\tau, \, z)
\label{n4:eqn:2022-1210c1}
\\[2mm]
\overset{\rm N=4}{R}{}^{(\varepsilon)}_{\varepsilon'}(\tau+1, \, z)
&=&
e^{-\pi i \varepsilon'} \, 
\overset{\rm N=4}{R}{}^{(\varepsilon+\varepsilon')}_{\varepsilon'}(\tau, \, z)
\label{n4:eqn:2022-1210c2}
\end{eqnarray}}
\end{subequations}
Note also that 
\begin{equation}
\overset{N=4}{R}{}^{(\pm)}(\tau,z+\tfrac12)
=\overset{N=4}{R}{}^{(\mp)}(\tau,z) \quad
\text{and} \quad
\overset{N=4}{R}{}^{(\pm) \, {\rm tw}}(\tau,z+\tfrac12)
=\overset{N=4}{R}{}^{(\mp) \, {\rm tw}}(\tau,z)
\label{n4:eqn:2023-111a}
\end{equation}

The numerators of the N=4 superconformal modules 
$H(\Lambda^{(M)[K(m),m_2](\heartsuit)}_{k_1,k_2})$ 
$(\heartsuit = {\rm I} \sim {\rm IV})$ constructed from the quantum 
Hamiltonian reduction of principal admissible $\widehat{A}(1,1)$-modules 
$L(\Lambda^{(M)[K(m),m_2](\heartsuit)}_{k_1,k_2})$
are obtained from the formula \eqref{n4:eqn:2022-1212c}
and they are given as follows:

\medskip

%(line=1810) %%
%(label=n4:prop:2022-1212c) %%
\begin{prop} \,\ 
\label{n4:prop:2022-1212c}
\begin{enumerate}
\item[{\rm 1)}]  
\begin{enumerate}
\item[{\rm (i)}] \,\ $\big[\overset{N=4}{R}{}^{(+)} \cdot 
{\rm ch}^{(+)}_{H(\Lambda^{(M)[K(m),m_2]{\rm (I)}}_{k_1,k_2})}\big](\tau,z)
\,\ = \,\ 
(-1)^{m_2+1} \, 
e^{-\frac{2\pi im}{M}k_2z} \, 
q^{-\frac{m}{M}(k_1+\frac12)(k_1+k_2+\frac12)}$
$$
\times \,\ \Phi^{(A(1|1))[m,m_2+1]}
(M\tau, \,\ z+(k_1+\tfrac12)\tau+\tfrac12, \,\ 
z-(k_1+k_2+\tfrac12)\tau+\tfrac12, \,\ 0)
$$
% 1) (ii)
\item[{\rm (ii)}] \,\ $\big[\overset{N=4}{R}{}^{(+)} \cdot 
{\rm ch}^{(+)}_{H(\Lambda^{(M)[K(m),m_2]{\rm (II)}}_{k_1,k_2})}\big](\tau,z)
\,\ = \,\ 
(-1)^{m_2+1} \, 
e^{\frac{2\pi im}{M}k_2z} \, 
q^{-\frac{m}{M}(k_1-\frac12)(k_1+k_2-\frac12)} $
$$
\times \,\ \Phi^{(A(1|1))[m,m_2+1]}
(M\tau, \,\ -z+(k_1-\tfrac12)\tau+\tfrac12, \,\ 
-z-(k_1+k_2-\tfrac12)\tau+\tfrac12, \,\ 0)
$$
% 1) (iii)
\item[{\rm (iii)}] \,\ $\big[\overset{N=4}{R}{}^{(+)} \cdot 
{\rm ch}^{(+)}_{H(\Lambda^{(M)[K(m),m_2]{\rm (III)}}_{k_1,k_2})}\big](\tau,z)
\,\ = \,\ 
(-1)^{m_2+1} \, 
e^{\frac{2\pi im}{M}k_2z} \, 
q^{-\frac{m}{M}(k_1+\frac12)(k_1+k_2+\frac12)}$
$$
\times \,\ \Phi^{(A(1|1))[m,m_2+1]}
(M\tau, \,\ -z+(k_1+\tfrac12)\tau+\tfrac12, \,\ 
-z-(k_1+k_2+\tfrac12)\tau+\tfrac12, \,\ 0)
$$
% 1) (iv)
\item[{\rm (iv)}] \,\ $\big[\overset{N=4}{R}{}^{(+)} \cdot 
{\rm ch}^{(+)}_{H(\Lambda^{(M)[K(m),m_2]{\rm (IV)}}_{k_1,k_2})}\big](\tau,z)
\,\ = \,\ 
(-1)^{m_2+1} \, 
e^{-\frac{2\pi im}{M}k_2z} \, 
q^{-\frac{m}{M}(k_1-\frac12)(k_1+k_2-\frac12)}$
$$
\times \,\ \Phi^{(A(1|1))[m,m_2+1]}
(M\tau, \,\ z+(k_1-\tfrac12)\tau+\tfrac12, \,\ 
z-(k_1+k_2-\tfrac12)\tau+\tfrac12, \,\ 0)
$$
\end{enumerate}
% 2)
\item[{\rm 2)}]  
\begin{enumerate}
\item[{\rm (i)}] \,\ $\big[\overset{N=4}{R}{}^{(-)} \cdot 
{\rm ch}^{(-)}_{H(\Lambda^{(M)[K(m),m_2]{\rm (I)}}_{k_1,k_2})}\big](\tau,z)
\,\ = \,\ 
e^{-\frac{2\pi im}{M}k_2z} \, 
q^{-\frac{m}{M}(k_1+\frac12)(k_1+k_2+\frac12)}$
$$
\times \,\ \Phi^{(A(1|1))[m,m_2+1]}
(M\tau, \,\ z+(k_1+\tfrac12)\tau, \,\ z-(k_1+k_2+\tfrac12)\tau, \,\ 0)
$$
% 2) (ii)
\item[{\rm (ii)}] \,\ $\big[\overset{N=4}{R}{}^{(-)} \cdot 
{\rm ch}^{(-)}_{H(\Lambda^{(M)[K(m),m_2]{\rm (II)}}_{k_1,k_2})}\big](\tau,z)
\,\ = \,\ 
e^{\frac{2\pi im}{M}k_2z} \, 
q^{-\frac{m}{M}(k_1-\frac12)(k_1+k_2-\frac12)} $
$$
\times \,\ \Phi^{(A(1|1))[m,m_2+1]}
(M\tau, \,\ -z+(k_1-\tfrac12)\tau, \,\ -z-(k_1+k_2-\tfrac12)\tau, \,\ 0)
$$
% 2) (iii)
\item[{\rm (iii)}] \,\ $\big[\overset{N=4}{R}{}^{(-)} \cdot 
{\rm ch}^{(-)}_{H(\Lambda^{(M)[K(m),m_2]{\rm (III)}}_{k_1,k_2})}\big](\tau,z)
\,\ = \,\ 
e^{\frac{2\pi im}{M}k_2z} \, 
q^{-\frac{m}{M}(k_1+\frac12)(k_1+k_2+\frac12)}$
$$
\times \,\ \Phi^{(A(1|1))[m,m_2+1]}
(M\tau, \,\ -z+(k_1+\tfrac12)\tau, \,\ -z-(k_1+k_2+\tfrac12)\tau, \,\ 0)
$$
% 2) (iv)
\item[{\rm (iv)}] \,\ $\big[\overset{N=4}{R}{}^{(-)} \cdot 
{\rm ch}^{(-)}_{H(\Lambda^{(M)[K(m),m_2]{\rm (IV)}}_{k_1,k_2})}\big](\tau,z)
\,\ = \,\ 
e^{-\frac{2\pi im}{M}k_2z} \, 
q^{-\frac{m}{M}(k_1-\frac12)(k_1+k_2-\frac12)}$
$$
\times \,\ \Phi^{(A(1|1))[1,m_2+1]}
(M\tau, \,\ z+(k_1-\tfrac12)\tau, \,\ z-(k_1+k_2-\tfrac12)\tau, \,\ 0)
$$
\end{enumerate}
\end{enumerate}
\end{prop}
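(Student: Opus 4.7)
The plan is to apply the quantum Hamiltonian reduction formula \eqref{n4:eqn:2022-1212c}, which reduces the task to evaluating the $\widehat{A}(1,1)$-character formulas of Proposition \ref{n4:prop:2022-1212a} at the specific point $(\tau, z_1, z_2, t) = (\tau, z + \tfrac{\tau}{2}, z - \tfrac{\tau}{2}, \tfrac{\tau}{4})$ dictated by the choice $x = \tfrac12 \theta$, $J_0 = -\alpha_2$ and the computation \eqref{n4:eqn:2022-1212b}. Each of the four cases $\heartsuit =$ I, II, III, IV is treated by the same mechanical substitution.

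First, for case (I), I would substitute into Proposition \ref{n4:prop:2022-1212a}\,1)\,(i) the values $z_1 = z + \tfrac{\tau}{2}$, $z_2 = z - \tfrac{\tau}{2}$, $t = \tfrac{\tau}{4}$. The arguments of $\Phi^{(A(1|1))[m,m_2+1]}$ immediately become $z + (k_1 + \tfrac12)\tau + \tfrac12$ and $z - (k_1 + k_2 + \tfrac12)\tau + \tfrac12$, matching the claimed expression. The prefactor requires a short computation: the combination
\[
-\tfrac{2\pi im}{M}\bigl[t + (k_1+k_2)z_1 - k_1 z_2\bigr] \,=\, -\tfrac{2\pi im}{M}\bigl[\tfrac{\tau}{4} + k_2 z + (k_1 + \tfrac{k_2}{2})\tau\bigr]
\]
is merged with $q^{-\frac{m}{M}k_1(k_1+k_2)}$ using the identity $k_1(k_1+k_2) + \tfrac{1}{4}(4k_1 + 2k_2 + 1) = (k_1+\tfrac12)(k_1+k_2+\tfrac12)$, yielding exactly the prefactor $(-1)^{m_2+1}\, e^{-\frac{2\pi im}{M}k_2 z}\, q^{-\frac{m}{M}(k_1+\frac12)(k_1+k_2+\frac12)}$ stated in 1)\,(i). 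The $z$-linear term survives as $k_2 z$ after $(k_1+k_2) - k_1 = k_2$.

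The remaining cases are handled identically. For case (II) the signs of $z_1$ and $z_2$ flip, which replaces $z \to -z$ in the $\Phi^{(A(1|1))}$-arguments and flips the sign of $k_2 z$ in the prefactor; moreover the combination $-(k_1+k_2)z_1 + k_1 z_2$ evaluated at $(z+\tfrac{\tau}{2}, z-\tfrac{\tau}{2})$ produces $-k_2 z - (k_1 + \tfrac{k_2}{2})\tau$ (with opposite $\tau$-sign relative to case (I)), so the identity $-k_1(k_1+k_2) + \tfrac{1}{4}(-4k_1 - 2k_2 + 1) = -(k_1-\tfrac12)(k_1+k_2-\tfrac12)$ yields the shifted $q$-exponent $-\tfrac{m}{M}(k_1-\tfrac12)(k_1+k_2-\tfrac12)$. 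Case (III) swaps $z_1 \leftrightarrow z_2$ then negates, and case (IV) swaps and keeps signs, giving the analogous outcomes dictated by Note \ref{note:Vol.325p.73}. The super-character statements in part 2) follow identically; the only difference is that the $z_1, z_2$ arguments of $\Phi^{(A(1|1))}$ no longer receive the extra $\tfrac12$ shift and the global $(-1)^{m_2+1}$ sign disappears, exactly reflecting the difference between parts 1) and 2) of Proposition \ref{n4:prop:2022-1212a}.

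The whole argument is algebraic and routine; the only place demanding care is the repeated rearrangement of the $\tau$-linear exponent and the $q$-factor into the factorized form $(k_1 \pm \tfrac12)(k_1 + k_2 \pm \tfrac12)$, with $\pm$ determined by whether the case $\heartsuit$ pairs $k_1$ with $+\tfrac{\tau}{2}$ or $-\tfrac{\tau}{2}$ and similarly for $k_1+k_2$. Writing out a small bookkeeping table for the four cases, one verifies that each of the eight formulas drops out by this one-line combination, completing the proof.
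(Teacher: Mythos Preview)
Your approach is correct and essentially identical to the paper's own proof: both apply the quantum Hamiltonian reduction formula \eqref{n4:eqn:2022-1212c} to Proposition \ref{n4:prop:2022-1212a} by substituting $(z_1,z_2,t)=(z+\tfrac{\tau}{2},\,z-\tfrac{\tau}{2},\,\tfrac{\tau}{4})$, then regroup the resulting $\tau$-linear exponent with the $q$-prefactor via the identity $k_1(k_1+k_2)+\tfrac14(4k_1+2k_2+1)=(k_1+\tfrac12)(k_1+k_2+\tfrac12)$ (and its sign-shifted variants) to obtain the stated factorized form. The paper only writes out case 1)\,(i) explicitly and asserts the rest are ``quite similar,'' exactly as you do.
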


\begin{proof} These formulas are obtained easily from \eqref{n4:eqn:2022-1212c}
and Proposition \ref{n4:prop:2022-1212a}. In the case 1) (i), 
its calculation is as follows:
%(line=1699) %%
{\allowdisplaybreaks
\begin{eqnarray*}
& & \hspace{-10mm}
\Big[\overset{N=4}{R}{}^{(+)} \cdot 
{\rm ch}^{(+)}_{H(\Lambda^{(M)[K(m),m_2]{\rm (I)}}_{k_1,k_2})}\Big](\tau,z)
\, = \,\ 
\big[
\widehat{R}^{(+)} \cdot 
{\rm ch}^{(+)}_{L(\Lambda^{(M)[K(m),m_2]{\rm (I)}}_{k_1,k_2})}\big]
\Big(\tau, \, z+\dfrac{\tau}{2}, \, 
z-\dfrac{\tau}{2}, \, \frac{\tau}{4}\Big)
\\[2mm]
&=& (-1)^{m_2+1}
e^{-\frac{2\pi im}{M}[\frac{\tau}{4}+(k_1+k_2)
(z+\frac{\tau}{2})-k_1(z-\frac{\tau}{2})]} \, 
q^{-\frac{m}{M}k_1(k_1+k_2)}
\\[2mm]
& & 
\times \,\ \Phi^{(A(1|1))[m,m_2+1]}
\Big(M\tau, \,\ z+\frac{\tau}{2}+k_1\tau+\frac12, \,\ 
z-\frac{\tau}{2}-(k_1+k_2)\tau+\frac12, \,\ 0\Big)
\\[2mm]
&=& (-1)^{m_2+1} \, 
e^{-\frac{2\pi im}{M}k_2z} \, 
\underbrace{q^{-\frac{m}{4M}} \, q^{-\frac{m}{M}(2k_1+k_2)} \, 
q^{-\frac{m}{M}k_1(k_1+k_2)}}_{\substack{|| \\[0mm] {\displaystyle 
q^{-\frac{m}{M}(k_1+\frac12)(k_1+k_2+\frac12)}
}}} 
\\[2mm]
& & 
\times \,\ \Phi^{(A(1|1))[m,m_2+1]}
(M\tau, \,\ z+(k_1+\tfrac12)\tau+\tfrac12, \,\ 
z-(k_1+k_2+\tfrac12)\tau+\tfrac12, \,\ 0)
\end{eqnarray*}}
proving 1) (i). The proof of the rest cases is quite similar.
\end{proof}

\medskip

Similarly the twisted characters of the quantum Hamiltonian reductions 
are obtained as follows:

\medskip

%(line=1960) %%
%(label=n4:prop:2022-1212d) %%
\begin{prop} \,\ 
\label{n4:prop:2022-1212d}
\begin{enumerate}
\item[{\rm 1)}]  
\begin{enumerate}
\item[{\rm (i)}] \,\ $\big[\overset{N=4}{R}{}^{(+){\rm tw}} \cdot 
{\rm ch}^{(+) \, 
{\rm tw}}_{H(\Lambda^{(M)[K(m),m_2]{\rm (I)}}_{k_1,k_2})}\big](\tau,z)
\,\ = \,\ 
(-1)^{m_2+1} \, 
e^{\frac{2\pi im}{M}(k_2+1)z} \, q^{-\frac{m}{M}k_1(k_1+k_2+1)}$
$$
\times \,\ \Phi^{(A(1|1))[m,m_2+1]}
(M\tau, \,\ -z+k_1\tau+\tfrac12, \,\ -z-(k_1+k_2+1)\tau+\tfrac12, \,\ 0)
$$
% 1) (ii)
\item[{\rm (ii)}] \,\ $\big[\overset{N=4}{R}{}^{(+){\rm tw}} \cdot 
{\rm ch}^{(+) \, 
{\rm tw}}_{H(\Lambda^{(M)[K(m),m_2]{\rm (II)}}_{k_1,k_2})}\big](\tau,z)
\,\ = \,\ 
(-1)^{m_2+1} \, 
e^{-\frac{2\pi im}{M}(k_2-1)z} \, q^{-\frac{m}{M}k_1(k_1+k_2-1)}$
$$
\times \,\ \Phi^{(A(1|1))[m,m_2+1]}
(M\tau, \,\ z+k_1\tau+\tfrac12, \,\ z-(k_1+k_2-1)\tau+\tfrac12, \,\ 0)
$$
% 1) (iii)
\item[{\rm (iii)}] \,\ $\big[\overset{N=4}{R}{}^{(+){\rm tw}} \cdot 
{\rm ch}^{(+) \, 
{\rm tw}}_{H(\Lambda^{(M)[K(m),m_2]{\rm (III)}}_{k_1,k_2})}\big](\tau,z)
\,\ = \,\ 
(-1)^{m_2+1} \, 
e^{-\frac{2\pi im}{M}(k_2-1)z} \, q^{-\frac{m}{M}(k_1+1)(k_1+k_2)}$
$$
\times \,\ \Phi^{(A(1|1))[m,m_2+1]}
(M\tau, \,\ z+(k_1+1)\tau+\tfrac12, \,\ z-(k_1+k_2)\tau+\tfrac12, \,\ 0)
$$
% 1) (iv)
\item[{\rm (iv)}] \,\ $\big[\overset{N=4}{R}{}^{(+){\rm tw}} \cdot 
{\rm ch}^{(+) \, 
{\rm tw}}_{H(\Lambda^{(M)[K(m),m_2]{\rm (IV)}}_{k_1,k_2})}\big](\tau,z)
\,\ = \,\ 
(-1)^{m_2+1} \, 
e^{\frac{2\pi im}{M}(k_2+1)z} \, q^{-\frac{m}{M}(k_1-1)(k_1+k_2)}$
$$
\times \,\ \Phi^{(A(1|1))[m,m_2+1]}
(M\tau, \,\ -z+(k_1-1)\tau+\tfrac12, \,\ -z-(k_1+k_2)\tau+\tfrac12, \,\ 0)
$$
\end{enumerate}
% 2) 
\item[{\rm 2)}]  
\begin{enumerate}
\item[{\rm (i)}] \,\ $\big[\overset{N=4}{R}{}^{(-){\rm tw}} \cdot 
{\rm ch}^{(-) \, 
{\rm tw}}_{H(\Lambda^{(M)[K(m),m_2]{\rm (I)}}_{k_1,k_2})}\big](\tau,z)
\,\ = \,\ 
e^{\frac{2\pi im}{M}(k_2+1)z} \, q^{-\frac{m}{M}k_1(k_1+k_2+1)}$
$$
\times \,\ \Phi^{(A(1|1))[m,m_2+1]}
(M\tau, \,\ -z+k_1\tau, \,\ -z-(k_1+k_2+1)\tau, \,\ 0)
$$
% 2) (ii)
\item[{\rm (ii)}] \,\ $\big[\overset{N=4}{R}{}^{(-){\rm tw}} \cdot 
{\rm ch}^{(-) \, 
{\rm tw}}_{H(\Lambda^{(M)[K(m),m_2]{\rm (II)}}_{k_1,k_2})}\big](\tau,z)
\,\ = \,\ 
e^{-\frac{2\pi im}{M}(k_2-1)z} \, q^{-\frac{m}{M}k_1(k_1+k_2-1)}$
$$
\times \,\ \Phi^{(A(1|1))[m,m_2+1]}
(M\tau, \,\ z+k_1\tau, \,\ z-(k_1+k_2-1)\tau, \,\ 0)
$$
% 2) (iii)
\item[{\rm (iii)}] \,\ $\big[\overset{N=4}{R}{}^{(-){\rm tw}} \cdot 
{\rm ch}^{(-) \, 
{\rm tw}}_{H(\Lambda^{(M)[K(m),m_2]{\rm (III)}}_{k_1,k_2})}\big](\tau,z)
\,\ = \,\ 
e^{-\frac{2\pi im}{M}(k_2-1)z} \, q^{-\frac{m}{M}(k_1+1)(k_1+k_2)}$
$$
\times \,\ \Phi^{(A(1|1))[m,m_2+1]}
(M\tau, \,\ z+(k_1+1)\tau, \,\ z-(k_1+k_2)\tau, \,\ 0)
$$
% 2) (iv)
\item[{\rm (iv)}] \,\ $\big[\overset{N=4}{R}{}^{(-){\rm tw}} \cdot 
{\rm ch}^{(-) \, 
{\rm tw}}_{H(\Lambda^{(M)[K(m),m_2]{\rm (IV)}}_{k_1,k_2})}\big](\tau,z)
\,\ = \,\ 
e^{\frac{2\pi im}{M}(k_2+1)z} \, q^{-\frac{m}{M}(k_1-1)(k_1+k_2)}$
$$
\times \,\ \Phi^{(A(1|1))[m,m_2+1]}
(M\tau, \,\ -z+(k_1-1)\tau, \,\ -z-(k_1+k_2)\tau, \,\ 0)
$$
\end{enumerate}
\end{enumerate}
\end{prop}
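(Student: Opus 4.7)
The plan is to proceed in exact parallel with the proof of Proposition \ref{n4:prop:2022-1212c}, but starting from the twisted character formulas for the principal admissible $\widehat{A}(1,1)$-modules given in Proposition \ref{n4:prop:2022-1212b} rather than from the non-twisted ones in Proposition \ref{n4:prop:2022-1212a}. The key input is the identity \eqref{n4:eqn:2022-1212c2}, which says that for any $\lambda$,
\[
\big[\overset{N=4}{R}{}^{(\pm){\rm tw}} \cdot {\rm ch}_{H(\lambda)}^{(\pm){\rm tw}}\big](\tau,z)
\,=\,
\big[\widehat{R}^{(\pm){\rm tw}} \cdot {\rm ch}_{L(\lambda)}^{(\pm){\rm tw}}\big]\Big(\tau,\,z+\tfrac{\tau}{2},\,z-\tfrac{\tau}{2},\,\tfrac{\tau}{4}\Big).
\]
So each of the eight formulas in Proposition \ref{n4:prop:2022-1212d} is obtained by plugging $(z_1,z_2,t)=(z+\tfrac{\tau}{2},\,z-\tfrac{\tau}{2},\,\tfrac{\tau}{4})$ into the corresponding formula of Proposition \ref{n4:prop:2022-1212b}, and then rewriting the result in the standard normalized form.

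The concrete work breaks into three small book-keeping tasks, to be carried out separately in each of the eight cases. First, collect the $e^{-\frac{2\pi im}{M}t}$ factor and the $z$-exponential and substitute; the linear-in-$z$ terms combine to produce the prefactor of the form $e^{\pm\frac{2\pi im}{M}(k_2\pm 1)z}$ displayed in the proposition. Second, collect all $q$-contributions: the original constant $q^{-\frac{m}{M}(k_1\pm\frac12)(k_1+k_2\pm\frac12)}$, the shift $q^{-m/(4M)}$ coming from $t=\tau/4$, and the $\tau$-parts produced by substituting $z_1,z_2$ in the $z$-exponential. A short algebraic identity such as
\[
\tfrac14 + k_1 + \tfrac{k_2}{2} + \bigl(k_1-\tfrac12\bigr)\bigl(k_1+k_2+\tfrac12\bigr) \,=\, k_1(k_1+k_2+1)
\]
(and three analogous identities for the other three classes $\heartsuit$) then produces the stated $q$-exponent $-\tfrac{m}{M}k_1(k_1+k_2+1)$ and its variants $-\tfrac{m}{M}k_1(k_1+k_2-1)$, $-\tfrac{m}{M}(k_1+1)(k_1+k_2)$, $-\tfrac{m}{M}(k_1-1)(k_1+k_2)$. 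Third, inside the argument of $\Phi^{(A(1|1))[m,m_2+1]}$, the $\pm\tau/2$ pieces from $z_1,z_2$ combine with the $(k_1\pm\tfrac12)\tau$ and $-(k_1+k_2\pm\tfrac12)\tau$ shifts already present, and clean up to the integer-translated shifts $k_1\tau,\,(k_1\pm 1)\tau,\,(k_1+k_2\pm 1)\tau$ shown in the statement.

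I would do the non-twisted supercharacter case 2)(i) as a sample computation in full detail, exactly in the style of the calculation displayed at the end of the proof of Proposition \ref{n4:prop:2022-1212c}, and then simply remark that the remaining seven cases are entirely analogous, since in each case the three book-keeping steps differ only by which sign appears in front of $z_1,z_2$ inside the $\Phi^{(A(1|1))}$ argument and inside the $z$-exponential prefactor. Moving from the $(+)$ to the $(-)$ case is purely cosmetic: only the additive $+\tfrac12$ inside the arguments of $\Phi^{(A(1|1))[m,m_2+1]}$ is dropped, and the leading $(-1)^{m_2+1}$ disappears, exactly as in the passage from Proposition \ref{n4:prop:2022-1212b} 1) to 2).

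The main obstacle is nothing conceptual but merely the risk of sign or bookkeeping errors when consolidating the three sources of $q$-powers and when matching the $\tau$-shifts inside $\Phi^{(A(1|1))[m,m_2+1]}$; the subtle point is that the pattern of shifts $(k_1,\,k_1+k_2+1)$ versus $(k_1,\,k_1+k_2-1)$ versus $(k_1\pm 1,\,k_1+k_2)$ across the four classes $\heartsuit$ is governed by whether the $\pm\tau/2$ from $z_1$ (resp.\ $z_2$) is accompanied by a $+$ or $-$ sign in $\overline{y}^{-1}z$. Tabulating these four sign patterns from Note \ref{note:Vol.325p.73} once and for all at the start will streamline the verification and eliminate this as a real obstacle.
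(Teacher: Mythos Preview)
Your proposal is correct and follows essentially the same approach as the paper: substitute $(z_1,z_2,t)=(z+\tfrac{\tau}{2},\,z-\tfrac{\tau}{2},\,\tfrac{\tau}{4})$ into the formulas of Proposition~\ref{n4:prop:2022-1212b} via \eqref{n4:eqn:2022-1212c2}, then regroup the exponential and $q$-prefactors and simplify the $\tau$-shifts inside $\Phi^{(A(1|1))[m,m_2+1]}$; the paper carries out case 1)(i) in detail (whereas you propose 2)(i)) and declares the rest analogous. One small slip: your sample case 2)(i) is the \emph{twisted} supercharacter, not ``non-twisted'' as you wrote.
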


\begin{proof} These formulas are obtained easily from \eqref{n4:eqn:2022-1212c}
and Proposition \ref{n4:prop:2022-1212b}. In the case 1) (i), 
its calculation is as follows:
%(line=1948) %%
{\allowdisplaybreaks
\begin{eqnarray*}
& & \hspace{-10mm}
\big[\overset{N=4}{R}{}^{(+){\rm tw}} \cdot 
{\rm ch}^{(+) \, {\rm tw}}_{H(\Lambda^{(M)[K(m),m_2]{\rm (I)}}_{k_1,k_2})}\big](\tau,z)
= \, 
\big[\widehat{R}^{(+){\rm tw}} \cdot 
{\rm ch}^{(+) \, {\rm tw}}_{L(\Lambda^{(M)[K(m),m_2]{\rm (I)}}_{k_1,k_2})}\big]
\Big(\tau, \, z+\dfrac{\tau}{2}, \, 
z-\dfrac{\tau}{2}, \, \frac{\tau}{4}\Big)
\\[2mm]
&=& (-1)^{m_2+1} \, 
e^{-\frac{2\pi im}{M} \cdot \frac{\tau}{4}} \, 
e^{\frac{2\pi im}{M}[-(k_1-\frac12)(z+\frac{\tau}{2})
+(k_1+k_2+\frac12)(z-\frac{\tau}{2})]} \, 
q^{-\frac{1}{M}(k_1-\frac12)(k_1+k_2+\frac12)}
\\[2mm]
& & 
\times \Phi^{(A(1|1))[m,m_2+1]}
\Big(M\tau, 
-\Big(z-\frac{\tau}{2}\Big)+\Big(k_1-\frac12\Big)\tau
+\frac12, 
-\Big(z+\frac{\tau}{2}\Big)-\Big(k_1+k_2+\frac12\Big)\tau
+\frac12, 0\Big)
\\[3mm]
&=&(-1)^{m_2+1} \, 
e^{\frac{2\pi im}{M}(k_2+1)z} 
\underbrace{q^{-\frac{m}{4M}} \, q^{-\frac{m}{2M}(2k_1+k_2)} \, 
q^{-\frac{m}{M}(k_1-\frac12)(k_1+k_2+\frac12)}}_{\substack{|| \\[0mm] 
{\displaystyle q^{-\frac{m}{M}k_1(k_1+k_2+1)}
}}} 
\\[2mm]
& &
\times \,\ \Phi^{(A(1|1))[m,m_2+1]}
(M\tau, \,\ -z+k_1\tau+\tfrac12, \,\ -z-(k_1+k_2+1)\tau+\tfrac12, \,\ 0)
\end{eqnarray*}}
proving 1) (i). The proof of the rests is quite similar.
\end{proof}

\section{$(h_{\lambda}, \, s_{\lambda})$ and 
$(h_{\lambda}^{\rm tw}, \, s_{\lambda}^{\rm tw})$}
\label{sec:h-lambda}
%(label=sec:h-lambda) %%
%(line=1995) %%

Let $h_{\lambda}$ (resp. $h_{\lambda}^{\rm tw}$) be the eigenvalue 
of the Virasoro operator $L_0$ (resp. twisted Virasoro operator 
$L_0^{\rm tw}$) on the highest weight vector in the N=4 module 
$H(\lambda)$ (resp. twisted N=4 module $H^{\rm tw}(\lambda)$), 
and let $s_{\lambda}$ (resp. $s_{\lambda}^{\rm tw}$) be the 
eigenvalue of $J^{\{\alpha_2^{\vee}\}}_0$ 
(resp. $J^{\{\alpha_2^{\vee}\} \, {\rm tw}}_0$)
on the highest weight vector in  $H(\lambda)$ 
(resp. $H^{\rm tw}(\lambda)$). The numbers $s_{\lambda}$ and 
$s_{\lambda}^{\rm tw}$ can be computed by the formulas:
%(label=n4:eqn:2022-1229a1) %%
%(label=n4:eqn:2022-1229a2) %%
%(line=2040) %%
\begin{subequations}
\begin{eqnarray}
s_{\lambda} \hspace{1.5mm} &=& (\lambda \, | \, \alpha_2^{\vee})
\,\ = \,\ - \, (\lambda \, | \, \alpha_2)
\label{n4:eqn:2022-1229a1}
\\[2mm]
s_{\lambda}^{\rm tw} 
&=& (\lambda^{\rm tw} \, | \, \alpha_2^{\vee})-1
\,\ = \,\ - \, (w_0(\lambda) \, | \, \alpha_2)-1
\label{n4:eqn:2022-1229a2}
\end{eqnarray}
\end{subequations}
where the term \lq \lq $-1$" in the RHS of \eqref{n4:eqn:2022-1229a2} 
takes place by applying similar arguments in section 5.4 in 
\cite{KW2005} to $w_0=r_{\alpha_2}t_{\frac12 \alpha_2}$.
For $\lambda= \Lambda^{(M)[K(m), m_2](\heartsuit)}_{k_1,k_2}$ 
$(\heartsuit =$ I $\sim$ IV), the numbers $s_{\lambda}$ and 
$s_{\lambda}^{\rm tw}$ are obtained as follows:

\medskip

%(line=2137) %%
%(label=n4:lemma:2022-1229a) %%
\begin{lemma} 
\label{n4:lemma:2022-1229a}
\label{note:2022-1221a}
Let $\lambda =\Lambda^{(M)[K(m),m_2](\heartsuit)}_{k_1,k_2}$  
$(\heartsuit =$ {\rm I} $\sim$ {\rm IV)}. Then 
\begin{enumerate}
\item[{\rm 1)}] \,\ $s_{\lambda} \,\ = \,\ 
\left\{
\begin{array}{ccl}
-\dfrac{mk_2}{M}+m_2 & \quad & {\rm if} \quad 
\heartsuit \, = \, {\rm I \,\ or \,\ IV}
\\[4mm]
\dfrac{mk_2}{M}-m_2-2 & \quad & {\rm if} \quad 
\heartsuit \, = \, {\rm II \,\ or \,\ III}
\end{array}\right. $
\item[{\rm 2)}] \,\ $s_{\lambda}^{\rm tw} \,\ = \,\ 
\left\{
\begin{array}{rcl}
\dfrac{m(k_2+1)}{M}-m_2-1 & \quad & {\rm if} \quad 
\heartsuit \, = \, {\rm I \,\ or \,\ IV}
\\[4mm]
-\dfrac{m(k_2-1)}{M}+m_2+1 & \quad & {\rm if} \quad 
\heartsuit \, = \, {\rm II \,\ or \,\ III}
\end{array}\right. $
\end{enumerate}
\end{lemma}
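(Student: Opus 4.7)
The plan is to read off $s_\lambda$ and $s_\lambda^{\rm tw}$ directly from the explicit formula \eqref{n4:eqn:2022-1211b} for $\lambda = \Lambda^{(M)[K(m),m_2](\heartsuit)}_{k_1,k_2}$ by pairing each summand against $\alpha_2$, using Note \ref{note:2022-1128d} to handle the two nontrivial inner products $(\beta\,|\,\alpha_2)$ and $(\overline{y}(\alpha_1+\alpha_3)\,|\,\alpha_2)$.

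For part 1), I would apply $(\,\cdot\,|\,\alpha_2)$ termwise to
\[
\lambda = -\tfrac{m}{M}\Lambda_0 - \tfrac{m}{M}\beta - \tfrac{m_2+1}{2}\overline{y}(\alpha_1+\alpha_3) - \rho + c_\lambda \delta,
\]
noting that $(\Lambda_0|\alpha_2)=0$ (from $(\Lambda_0|\alpha_j)=\delta_{j,0}$), $(\delta|\alpha_2)=0$, and $(\rho|\alpha_2) = -\tfrac12((\alpha_1+\alpha_3)|\alpha_2) = -1$ by the given Gram matrix. The remaining pieces are precisely the contents of Note \ref{note:2022-1128d}: for $\heartsuit={\rm I,IV}$ one gets $(\beta|\alpha_2)=-k_2$ and $(\overline{y}(\alpha_1+\alpha_3)|\alpha_2)=2$, yielding $(\lambda|\alpha_2)=\tfrac{mk_2}{M}-m_2$, while for $\heartsuit={\rm II,III}$ the signs flip and one gets $(\lambda|\alpha_2)=-\tfrac{mk_2}{M}+m_2+2$. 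Negation gives the stated values of $s_\lambda$ via \eqref{n4:eqn:2022-1229a1}.

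For part 2), the idea is to avoid computing $w_0(\lambda)$ directly and instead use $(w_0(\lambda)|\alpha_2) = (\lambda\,|\,w_0^{-1}(\alpha_2))$. From \eqref{n4:eqn:2022-1230a} one has $w_0(\alpha_2) = -\alpha_2+\delta$, and since $w_0$ fixes $\delta$, applying $w_0^{-1}$ to this identity gives $w_0^{-1}(\alpha_2) = -\alpha_2+\delta$ as well. Hence
\[
(w_0(\lambda)\,|\,\alpha_2) \,=\, -(\lambda\,|\,\alpha_2) \,+\, (\lambda\,|\,\delta).
\]
Since $(\beta|\delta) = (\overline{y}(\alpha_1+\alpha_3)|\delta) = (\rho|\delta) = 0$ and $(\Lambda_0|\delta)=1$, the formula \eqref{n4:eqn:2022-1211b} gives $(\lambda|\delta) = -\tfrac{m}{M}$. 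Substituting the values of $(\lambda|\alpha_2)$ from part 1) and then using \eqref{n4:eqn:2022-1229a2} with its $-1$ shift, one arrives at $s_\lambda^{\rm tw} = \tfrac{m(k_2+1)}{M}-m_2-1$ in cases I, IV and $s_\lambda^{\rm tw} = -\tfrac{m(k_2-1)}{M}+m_2+1$ in cases II, III.

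The only genuinely nontrivial step is the derivation of $w_0^{-1}(\alpha_2)$; once one knows this and the values in Note \ref{note:2022-1128d}, the rest is routine termwise pairing. The potential pitfall is sign/normalization conventions for $\alpha_2^\vee = -\alpha_2$ (since $(\alpha_2|\alpha_2)=-2$) in \eqref{n4:eqn:2022-1229a1}–\eqref{n4:eqn:2022-1229a2}, and the justification of the $-1$ correction in \eqref{n4:eqn:2022-1229a2}, which the authors attribute to the analogue of the argument in \S 5.4 of \cite{KW2005}; this is the one piece I would treat as a black box rather than re-derive.
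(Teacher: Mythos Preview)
Your proposal is correct and follows essentially the same approach as the paper: both compute $(\lambda|\alpha_2)$ termwise from \eqref{n4:eqn:2022-1211b} using Note \ref{note:2022-1128d}, and both handle part 2) via $(w_0(\lambda)|\alpha_2)=(\lambda|w_0^{-1}\alpha_2)=(\lambda|\delta-\alpha_2)=-\tfrac{m}{M}-(\lambda|\alpha_2)$ before applying the $-1$ shift from \eqref{n4:eqn:2022-1229a2}. The only cosmetic difference is that the paper states $w_0^{-1}\alpha_2=\delta-\alpha_2$ directly, whereas you derive it from $w_0(\alpha_2)=-\alpha_2+\delta$ and $w_0(\delta)=\delta$.
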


\begin{proof} By \eqref{n4:eqn:2022-1211b}, we have
$$
\lambda = 
\Lambda^{(M)[K(m),m_2](\heartsuit)}_{k_1,k_2} \, \equiv \,
-  \frac{m}{M} \, \Lambda_0
-  \frac{m}{M} \beta
-  \frac{m_2+1}{2} \, \overline{y}(\alpha_1+\alpha_3)
-  \rho
\quad {\rm mod} \,\ \ccc \, \delta
$$
so
$$
(\lambda| \, \alpha_2)
\,\ = \,\
\, - \, \dfrac{m}{M} (\beta |\alpha_2)
\, - \, \dfrac{m_2+1}{2} \, (\overline{y}(\alpha_1+\alpha_3)| \, \alpha_2)
\, - \, \underbrace{(\rho|\alpha_2)}_{-1}
$$

\noindent
1) \,\ Using this formula and Note \ref{note:2022-1128d}, 
we compute $(\lambda|\alpha_2)$ as follows:

\medskip

(i) \,\ If \,\ $\heartsuit \, = \, $ I \, or \, IV,
$$
(\lambda \, | \, \alpha_2)
\,\ = \,\
\, - \, \frac{m}{M} 
\underbrace{(\beta |\alpha_2)}_{
\substack{|| \\[0mm] {\displaystyle -k_2
}}}
\, - \, \frac{m_2+1}{2} \, 
\underbrace{(\overline{y}(\alpha_1+\alpha_3)| \, \alpha_2)}_{
\substack{|| \\[0mm] {\displaystyle 2
}}}+1
\,\ = \,\ \frac{mk_2}{M}-m_2
$$

(ii) \,\ if \,\ $\heartsuit \, = \, $ II \, or \, III,
$$
(\lambda \, | \, \alpha_2)
\,\ = \,\
\, - \, \frac{m}{M} 
\underbrace{(\beta |\alpha_2)}_{
\substack{|| \\[0mm] {\displaystyle k_2
}}}
\, - \, \frac{m_2+1}{2} \, 
\underbrace{(\overline{y}(\alpha_1+\alpha_3)| \, \alpha_2)}_{
\substack{|| \\[0mm] {\displaystyle - 2
}}}+1
\,\ = \,\ - \, \frac{mk_2}{M}+m_2+2
$$
Then by \eqref{n4:eqn:2022-1229a1}, we obtain the formulas in 1).

\medskip

\noindent
2) \quad $(w_0(\lambda) \, | \, \alpha_2)
\,\ = \,\
(\lambda \, | \, w_0^{-1}\alpha_2)
\,\ = \,\
(\lambda \, | \, \delta-\alpha_2)
\,\ = \,\ 
-\frac{m}{M} - (\lambda \, | \, \alpha_2)$
{\allowdisplaybreaks
\begin{eqnarray*}
&=&
- \, \frac{m}{M}
\,\ - \,\ 
\left\{
\begin{array}{rcl}
\dfrac{mk_2}{M}-m_2 & \quad & {\rm if} \quad 
\heartsuit \, = \, {\rm I \,\ or \,\ IV}
\\[4mm]
- \, \dfrac{mk_2}{M}+m_2+2 & \quad & {\rm if} \quad 
\heartsuit \, = \, {\rm II \,\ or \,\ III}
\end{array}\right. 
\\[3mm]
&=&
\left\{
\begin{array}{rcl}
- \, \dfrac{m(k_2+1)}{M}+m_2 & \quad & {\rm if} \quad 
\heartsuit \, = \, {\rm I \,\ or \,\ IV}
\\[4mm]
\dfrac{m(k_2-1)}{M}-m_2-2 & \quad & {\rm if} \quad 
\heartsuit \, = \, {\rm II \,\ or \,\ III}
\end{array}\right. 
\end{eqnarray*}}
Then by \eqref{n4:eqn:2022-1229a2}, we obtain the formulas in 2).
\end{proof}

\medskip

The numbers $h_{\lambda}$ and $h_{\lambda}^{\rm tw}$,
for $\lambda= \Lambda^{(M)[K(m), m_2](\heartsuit)}_{k_1,k_2}$ 
$(\heartsuit =$ I $\sim$ IV), are given by Lemma 9.1 in 
\cite{KW2017b} as follows:

\medskip

%(line=2270) %%
%(label=n4:lemma:2022-1214a) %%
\begin{lemma} 
\label{n4:lemma:2022-1214a}
Let $\lambda= \Lambda^{(M)[K(m), m_2](\heartsuit)}_{k_1,k_2}$ 
$(\heartsuit = {\rm I} \, \sim \, {\rm IV})$. Then 
\begin{enumerate}
\item[{\rm 1)}] $h_{\lambda} = \left\{
\begin{array}{ll}
-\dfrac{m}{M} \Big(k_1+\dfrac12\Big)\Big(k_1+k_2+\dfrac12\Big)
+ (m_2+1)\Big(k_1+\dfrac12\Big)
- \dfrac{-\frac{m}{M}+2}{4} 
& \text{{\rm if} \, $\heartsuit =$ {\rm I} {\rm or} {\rm III} }
\\[5mm]
-\dfrac{m}{M} \Big(k_1-\dfrac12\Big)\Big(k_1+k_2-\dfrac12\Big)
+ (m_2+1)\Big(k_1-\dfrac12\Big)
- \dfrac{-\frac{m}{M}+2}{4} 
& \text{{\rm if} \, $\heartsuit =$ {\rm II} {\rm or} {\rm IV} }
\end{array}\right. $ 
\item[{\rm 2)}] $h_{\lambda}^{\rm tw} = \left\{
\begin{array}{lcl}
- \, \dfrac{m}{M} \, k_1(k_1+k_2+1) \, + \, (m_2+1)k_1
\, - \, \dfrac{-\frac{m}{M}+1}{4} & &
\text{{\rm if} \, $\heartsuit =$ {\rm I}}
\\[3mm]
- \, \dfrac{m}{M} \, k_1(k_1+k_2-1) \, + \, (m_2+1)k_1
\, - \, \dfrac{-\frac{m}{M}+1}{4} & &
\text{{\rm if} \, $\heartsuit =$ {\rm II}}
\\[3mm]
- \, \dfrac{m}{M} \, (k_1+1)(k_1+k_2) \, + \, (m_2+1)(k_1+1)
\, - \, \dfrac{-\frac{m}{M}+1}{4} & &
\text{{\rm if} \, $\heartsuit =$ {\rm III}}
\\[3mm]
- \, \dfrac{m}{M} \, (k_1-1)(k_1+k_2) \, + \, (m_2+1)(k_1-1)
\, - \, \dfrac{-\frac{m}{M}+1}{4} & &
\text{{\rm if} \, $\heartsuit =$ {\rm IV}}
\end{array} \right. $
\end{enumerate}
\end{lemma}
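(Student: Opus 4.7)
The plan is to derive both the conformal weight $h_\lambda$ and its Ramond-twisted counterpart $h_\lambda^{\mathrm{tw}}$ by direct application of the standard formula for the $L_0$-eigenvalue on the highest weight vector produced by quantum Hamiltonian reduction. For the reduction data $(x = \tfrac12\theta,\, f = e_{-\theta})$ with $h^\vee = 0$ and effective level $K+h^\vee = -m/M$, the formula reads
$$h_\lambda \;=\; \frac{(\lambda \mid \lambda + 2\rho)}{2(K + h^\vee)} \;-\; (x \mid \lambda),$$
and the Ramond twist replaces $\lambda$ by $w_0(\lambda)$ with $w_0 = r_{\alpha_2} t_{\frac12\alpha_2}$, modulo an explicit central-charge-dependent shift that distinguishes the constant $-(-\tfrac{m}{M}+2)/4$ in part 1) from $-(-\tfrac{m}{M}+1)/4$ in part 2).

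I would first substitute the explicit expression \eqref{n4:eqn:2022-1211b} for $\lambda$ into $(\lambda\mid\lambda+2\rho)$ and $(x\mid\lambda)$. The bilinear form $(\lambda\mid\lambda+2\rho)$ expands into a contribution from the $(\Lambda_0\mid\delta)$ pairing against the $\delta$-coefficient of $\lambda$ (producing a linear combination of $k_1(k_1+k_2)$ and $k_1(m_2+1)$), the quadratic piece $\tfrac{m^2}{M^2}(\beta\mid\beta)$, and mixed terms involving $(\beta\mid \overline{y}(\alpha_1+\alpha_3))$ and $(\rho\mid\rho)$. The term $(x\mid\lambda) = \tfrac12(\theta\mid\lambda)$ reduces immediately via Note \ref{note:2022-1128c}, which tabulates $(\beta\mid\theta)$ and $(\overline{y}(\alpha_1+\alpha_3)\mid\theta)$ in each of the four cases.

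The case split $\heartsuit \in \{\mathrm{I}, \mathrm{II}, \mathrm{III}, \mathrm{IV}\}$ enters only through the signs of these pairings. In cases I and III one has $(\overline{y}(\alpha_1+\alpha_3)\mid\theta)=+2$ and $(\beta\mid\theta)=-(2k_1+k_2)$, so the half-integer shift in $k_1$ is $+\tfrac12$, giving the $(k_1+\tfrac12)(k_1+k_2+\tfrac12)$ combination; in cases II and IV the opposite signs produce $(k_1-\tfrac12)(k_1+k_2-\tfrac12)$. For the twisted formula one repeats the computation with $w_0(\lambda)$ in place of $\lambda$. Using \eqref{n4:eqn:2022-1230a} one finds that $w_0$ shifts the effective value of $k_1$ by $0$ or $\pm 1$ according to case, leading to the four subcases of 2), while the constant $-(-\tfrac{m}{M}+1)/4$ differs from its untwisted analogue by exactly the $c/24$ contribution of the Ramond sector, traceable to the $(x\mid x)\delta$-term in \eqref{n4:eqn:2022-1212b}.

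The main obstacle is the careful bookkeeping of signs across the four types of principal admissible simple subsets: the pairings of $\beta$ and $\overline{y}(\alpha_1+\alpha_3)$ with $\alpha_2$ (Note \ref{note:2022-1128d}) and with $\theta$ (Note \ref{note:2022-1128c}) flip in different patterns, so one must organize them in a table before assembling the final expression. Once this is done, both parts reduce to elementary algebra. Since the statement coincides with Lemma 9.1 of \cite{KW2017b}, one may alternatively cite it directly; the scheme above simply rederives it in the present notation.
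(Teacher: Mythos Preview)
The paper does not actually prove this lemma: immediately before the statement it says that $h_\lambda$ and $h_\lambda^{\rm tw}$ ``are given by Lemma 9.1 in \cite{KW2017b}'' and then simply records the formulas. Your proposal already identifies this citation as a valid alternative, so in that sense you match the paper exactly.

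What you add is a sketch of how one would rederive the result from the standard $L_0$-eigenvalue formula for quantum Hamiltonian reduction, using the explicit form \eqref{n4:eqn:2022-1211b} of $\lambda$ together with the sign tables in Notes \ref{note:2022-1128d} and \ref{note:2022-1128c}. That is a reasonable outline and is consistent with how \cite{KW2017b} obtains its Lemma 9.1, though to turn it into a complete proof you would still need to pin down the exact additive constants (the $\dim\mathfrak{g}_{1/2}$-type contribution in the KRW formula and the precise Ramond shift), which is where most of the care lies. For the purposes of this paper, simply citing \cite{KW2017b} as the paper does is both sufficient and what the author intended.
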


\medskip

We note that 
$$
\text{the central charge of} \, H(\lambda) \,\ = \,\ 
-6 \, \times \, \big\{
\text{the central charge of} \, L(\lambda) \, +1\big\}
$$
so
\begin{equation}
\text{the central charge of} \, 
H(\Lambda^{(M)[K(m),m_2](\heartsuit)}_{k_1,k_2})
\,\ = \,\ 
-6 \, \Big(-\frac{m}{M}+1\Big)
\,\ = \,\ 
6 \, \Big(\frac{m}{M}-1\Big)
\label{n4:eqn:2023-102a}
\end{equation}

From \eqref{n4:eqn:2023-101a} and Lemmas 
\ref{n4:lemma:2022-1229a} and \ref{n4:lemma:2022-1214a}, we 
obtain the equivalence of N=4 modules:

\medskip

%(line=2341) %%
%(label=n4:prop:2023-102a) %%
\begin{prop} 
\label{n4:prop:2023-102a}
Let $M$ and $m$ be coprime positive integers ,and 
$m_2$ be a non-negative integer such that $0 \leq m_2 \leq m$, 
and $k_1$ and  $k_2$ be integers satisfying 
\eqref{n4:eqn:2023-101a}. Then 
\begin{enumerate}
\item[{\rm 1)}] if \, $k_1, k_2 \geq 0$ \, and \, $2k_1+k_2 \leq M-2$,
$\left\{
\begin{array}{lcl}
H(\Lambda^{(M)[K(m),m_2]({\rm I})}_{k_1,k_2}) \hspace{-2mm}
&\cong& \hspace{-2mm}
H(\Lambda^{(M)[K(m),m_2]({\rm IV})}_{k_1+1,k_2})
\\[3mm]
H^{\rm tw}(\Lambda^{(M)[K(m),m_2]({\rm I})}_{k_1,k_2}) \hspace{-2mm}
&\cong& \hspace{-2mm}
H^{\rm tw}(\Lambda^{(M)[K(m),m_2]({\rm IV})}_{k_1+1,k_2})
\end{array}\right. $
\item[{\rm 2)}] if $\left\{
\begin{array}{l}
k_1\geq 0 \\[1mm]
k_2 \geq 1
\end{array}\right.
$ and \, $2k_1+k_2 \leq M-2$,
$\left\{
\begin{array}{lcl}
H(\Lambda^{(M)[K(m),m_2]({\rm III})}_{k_1,k_2}) \hspace{-2mm}
&\cong& \hspace{-2mm}
H(\Lambda^{(M)[K(m),m_2]({\rm II})}_{k_1+1,k_2})
\\[3mm]
H^{\rm tw}(\Lambda^{(M)[K(m),m_2]({\rm III})}_{k_1,k_2}) \hspace{-2mm}
&\cong& \hspace{-2mm}
H^{\rm tw}(\Lambda^{(M)[K(m),m_2]({\rm II})}_{k_1+1,k_2})
\end{array}\right. $
\end{enumerate}
So we need to consider the characters of 
$H(\Lambda^{(M)[K(m),m_2](\heartsuit)}_{k_1,k_2})$
only for $\heartsuit =$ {\rm I} and {\rm III}.
\end{prop}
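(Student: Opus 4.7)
The plan is to invoke the classification fact recalled just after Theorem \ref{n4:thm:2022-1203a}: an irreducible highest weight $N=4$ module is determined by the triple $(c_V, h_V, s_V)$, and the same holds in the Ramond (twisted) sector. So to establish each isomorphism it suffices to check that the three invariants agree on both sides. The central charge is the easy step: by \eqref{n4:eqn:2023-102a} the central charge of $H(\Lambda^{(M)[K(m),m_2](\heartsuit)}_{k_1,k_2})$ depends only on $m/M$, hence is the same for all four labels $\heartsuit$ and all $(k_1,k_2)$.

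Next I would match the $J_0$-eigenvalues using Lemma \ref{n4:lemma:2022-1229a}. In part (1) of that lemma the expressions for $s_\lambda$ and $s_\lambda^{\rm tw}$ depend on $(m,M,m_2,k_2)$ only, with no $k_1$; moreover cases I and IV lie in the same bracket, and cases II and III lie in the same bracket. Thus the shift $k_1\mapsto k_1+1$ in passing from type I to type IV (resp.\ from type III to type II) leaves both $s$ and $s^{\rm tw}$ untouched, giving equality in each of the four assertions.

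For the Virasoro eigenvalues I would simply substitute $k_1\mapsto k_1+1$ in the type IV (resp.\ type II) formula of Lemma \ref{n4:lemma:2022-1214a} and verify that the result coincides verbatim with the type I (resp.\ type III) formula. In the non-twisted case this amounts to the identity $(k_1+1)-\tfrac12 = k_1+\tfrac12$, so the quadratic and linear terms match; in the twisted case it is $(k_1+1)-1 = k_1$ for types I/IV and $(k_1+1)\cdot(k_1+1+k_2-1) = (k_1+1)(k_1+k_2)$ for types III/II. In both cases every occurrence of $k_1$ in the original formula transforms to give exactly the opposite formula, so $h$ and $h^{\rm tw}$ agree.

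Finally I must check that the parameter ranges are compatible, i.e.\ that the extra restriction $2k_1+k_2\leq M-2$ imposed in the statement is exactly what is needed for the shifted parameters $(k_1+1,k_2)$ to lie in the admissible range \eqref{n4:eqn:2023-101a} for the target type. For case (1), the type IV range requires $k_1+1\geq 1$, $k_2\geq 0$ and $2(k_1+1)+k_2\leq M$, which is precisely the hypothesis; for case (2), the type II range requires $k_1+1\geq 1$, $k_2\geq 1$ and $2(k_1+1)+k_2\leq M$, which again matches. The only nontrivial step is the bookkeeping in Lemma \ref{n4:lemma:2022-1214a}, but because the lemma is already in hand this reduces to elementary algebra, so no real obstacle is expected.
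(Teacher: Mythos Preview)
Your proposal is correct and follows exactly the approach the paper indicates: the paper simply states that the proposition follows from \eqref{n4:eqn:2023-101a} together with Lemmas \ref{n4:lemma:2022-1229a} and \ref{n4:lemma:2022-1214a}, and you have supplied precisely the verification that the triples $(c,h,s)$ and $(c,h^{\rm tw},s^{\rm tw})$ coincide under $k_1\mapsto k_1+1$, together with the parameter-range check. One minor slip: the classification fact you cite appears in the paragraph just \emph{before} Theorem \ref{n4:thm:2022-1203a}, not after.
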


\medskip

In the \lq \lq nice" cases of quantum Hamiltonian reduction,
which are going to be discussed in section \ref{sec:nice}, 
these numbers are given by the following formulas:

\medskip

%(line=2387) %%
%(label=prop:2022-1204a) %%
\begin{prop} 
\label{prop:2022-1204a}
Let $\lambda=\Lambda^{(M)[K(1),0](\heartsuit)}_{k_1,k_2}$
such that $\heartsuit = $ {\rm I} or {\rm III} and 
$2k_1+k_2=M-1$. Then $(h_{\lambda}, \, s_{\lambda})$ and 
$(h_{\lambda}^{\rm tw}, \, s_{\lambda}^{\rm tw})$ are as follows:
\begin{enumerate}
\item[{\rm 1)}] \quad $h_{\lambda} \,\ = \,\ 
\dfrac{1}{M}\Big(k_1+\dfrac12\Big)^2 \, + \, \dfrac{1}{4M} \, - \, \dfrac12$ 
\item[{\rm 2)}] \quad $s_{\lambda} \,\ = \,\ \left\{
\begin{array}{rcl}
\dfrac{2k_1+1}{M} -1 & & {\rm if} \quad \heartsuit \,\ = \,\ {\rm I}
\\[4mm]
- \dfrac{2k_1+1}{M} -1 & & {\rm if} \quad \heartsuit \,\ = \,\ {\rm III}
\end{array}\right. $ 
% twisted
\item[$1)^{\rm tw}$] \quad $h_{\lambda}^{\rm tw} \,\ = \,\ \left\{
\begin{array}{rcl}
\dfrac{k_1^2}{M} \, + \, \dfrac{1}{4M}-\dfrac14
& & {\rm if} \quad \heartsuit \,\ = \,\ {\rm I}
\\[4mm]
\dfrac{(k_1+1)^2}{M} \, + \, \dfrac{1}{4M}-\dfrac14
& & {\rm if} \quad \heartsuit \,\ = \,\ {\rm III}
\end{array}\right. $ 
\item[$2)^{\rm tw}$] \quad $s_{\lambda}^{\rm tw} \,\ = \,\ \left\{
\begin{array}{ccl}
- \, \dfrac{2k_1}{M} & & {\rm if} \quad \heartsuit \,\ = \,\ {\rm I}
\\[4mm]
\dfrac{2(k_1+1)}{M} & & {\rm if} \quad \heartsuit \,\ = \,\ {\rm III}
\end{array}\right. $ 
\end{enumerate}
\end{prop}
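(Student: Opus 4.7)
The plan is to derive this proposition as a direct specialization of Lemmas \ref{n4:lemma:2022-1229a} and \ref{n4:lemma:2022-1214a} to the parameters $m=1$, $m_2=0$, subject to the boundary condition $2k_1+k_2=M-1$. Under that constraint one has the useful identities
\[
k_1+k_2+\tfrac12 \, = \, M-k_1-\tfrac12, \qquad k_1+k_2+1 \, = \, M-k_1, \qquad k_1+k_2 \, = \, M-1-k_1,
\]
which make the quadratic expressions in $k_1$ appearing in the two cited lemmas collapse to the simple squares claimed in the proposition.

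For the $U(1)$-eigenvalues $s_\lambda$ and $s_\lambda^{\rm tw}$, I would just read off Lemma \ref{n4:lemma:2022-1229a} at $m=1$, $m_2=0$. In case $\heartsuit=$ I this yields $s_\lambda=-k_2/M=-(M-1-2k_1)/M=(2k_1+1)/M-1$ and $s_\lambda^{\rm tw}=(k_2+1)/M-1=-2k_1/M$. In case $\heartsuit=$ III one gets $s_\lambda=k_2/M-2=-(2k_1+1)/M-1$ and $s_\lambda^{\rm tw}=-(k_2-1)/M+1=2(k_1+1)/M$, matching items 2) and $2)^{\rm tw}$.

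For the conformal weights I would substitute into Lemma \ref{n4:lemma:2022-1214a}. Since cases I and III share the same formula for $h_\lambda$, it suffices to compute once: with the above identities the formula reads
\[
h_\lambda \, = \, -\tfrac{1}{M}\bigl(k_1+\tfrac12\bigr)\bigl(M-k_1-\tfrac12\bigr) + \bigl(k_1+\tfrac12\bigr) - \tfrac{1}{4}\bigl(-\tfrac{1}{M}+2\bigr),
\]
and the expansion $-\tfrac{1}{M}(k_1+\tfrac12)(M-k_1-\tfrac12)=-(k_1+\tfrac12)+\tfrac{1}{M}(k_1+\tfrac12)^2$ lets the linear term cancel, giving item 1). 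For the twisted weight the same mechanism applies: in case I the product $k_1(k_1+k_2+1)=k_1(M-k_1)=k_1M-k_1^2$ cancels the linear term $(m_2+1)k_1=k_1$, leaving $k_1^2/M$; in case III, $(k_1+1)(k_1+k_2)=(k_1+1)(M-1-k_1)=(k_1+1)M-(k_1+1)^2$ cancels the linear term $(m_2+1)(k_1+1)=k_1+1$ and leaves $(k_1+1)^2/M$. Adding the constant $-\tfrac14(-\tfrac{1}{M}+1)=\tfrac{1}{4M}-\tfrac14$ produces exactly the formulas in $1)^{\rm tw}$.

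There is no genuine obstacle: the entire argument is bookkeeping around the cancellation of a linear term in $k_1$ against the constant term $(m_2+1)(\cdot)$, together with tracking the small constants $\tfrac{1}{4M}$ and $\tfrac14$ (or $\tfrac12$) arising from the shift $-\tfrac14(-m/M+2)$ or $-\tfrac14(-m/M+1)$. The only place one must be careful is distinguishing $\heartsuit=$ I from $\heartsuit=$ III in the twisted case, where the roles of $k_1$ and $k_1+1$ are interchanged because the formula in Lemma \ref{n4:lemma:2022-1214a}, 2) uses $k_1$ for $\heartsuit=$ I but $k_1+1$ for $\heartsuit=$ III.
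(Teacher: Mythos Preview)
Your proposal is correct and follows essentially the same approach as the paper: both specialize Lemmas \ref{n4:lemma:2022-1229a} and \ref{n4:lemma:2022-1214a} to $m=1$, $m_2=0$ and use the constraint $2k_1+k_2=M-1$ (equivalently $k_1+k_2=M-1-k_1$) to collapse the expressions, with the same cancellation of the linear term against $(m_2+1)(\cdot)$. The only cosmetic difference is that the paper writes the intermediate step for $h_\lambda$ as $\tfrac{1}{M}(k_1+\tfrac12)(k_1+\tfrac12-M)+\tfrac{1}{4M}+k_1$ rather than your $-(k_1+\tfrac12)+\tfrac{1}{M}(k_1+\tfrac12)^2$, but the algebra is identical.
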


\begin{proof} \,\ These formulas are obtained easily 
from Lemmas \ref{n4:lemma:2022-1229a} and 
\ref{n4:lemma:2022-1214a} by using $k_2 = M-1-2k_1$ and 
$k_1+k_2 = M-1-k_1$ as follows:

\vspace{2mm}

\noindent
1) \quad $h_{\lambda} \, = \, 
- \, \dfrac{1}{M}\Big(k_1+\dfrac12\Big)\Big(k_1+k_2+\dfrac12\Big)
\, + \, k_1
\, + \, \dfrac{1}{4M}$
$$
= \, 
\frac{1}{M}\Big(k_1+\frac12\Big)\Big(k_1+\frac12-M
\Big)
+ \frac{1}{4M}
+ k_1
\,\ = \,\ 
\frac{1}{M}\Big(k_1+\frac12\Big)^2
+ \frac{1}{4M}
- \frac12
$$
proving 1).

\vspace{3mm}

\noindent
$1)^{\rm tw}$ : \quad If \,\ $\heartsuit \, =$ I, 
{\allowdisplaybreaks
\begin{eqnarray*}
h_{\lambda}^{\rm tw} &=& 
- \, \dfrac{1}{M} \, k_1(k_1+k_2+1) \, + \, k_1
\, - \, \frac{-\frac{1}{M}+1}{4}
\,\ = \,\ 
\frac{k_1(k_1-M)}{M}+k_1+\frac{1}{4M}-\frac14
\\[2mm]
&=&
\frac{k_1^{\, 2}}{M}+\frac{1}{4M}-\frac14
\end{eqnarray*}}

If \,\ $\heartsuit \, =$ III, 
{\allowdisplaybreaks
\begin{eqnarray*}
h_{\lambda}^{\rm tw}
&=& 
- \, \dfrac{1}{M} \, (k_1+1)(k_1+k_2) 
\,\ + \,\ (k_1+1)
\, - \, \dfrac{-\frac{1}{M}+1}{4}
\\[2mm]
&=& \frac{1}{M}(k_1+1)(k_1+1-M)
+ (k_1+1)
+ \frac{1}{4M}
- \frac14
\,\ = \,\ 
\frac{1}{M}(k_1+1)^2 + \frac{1}{4M} - \frac14
\end{eqnarray*}}
proving $1)^{\rm tw}$.

\vspace{3mm}

\noindent
2) \,\ If \,\ $\heartsuit \, =$ I, \quad $
s_{\lambda} \, = \, - \, \dfrac{k_2}{M}
\,\ = \,\ - \, \dfrac{M-1-2k_1}{M}
\,\ = \,\ \dfrac{2k_1+1}{M}-1$

\vspace{2mm}

If \,\ $\heartsuit \, =$ III, \quad  $
s_{\lambda} \, = \, \dfrac{k_2}{M}-2
\,\ = \,\ \dfrac{M-1-2k_1}{M}-2
\,\ = \,\ -\dfrac{2k_1+1}{M}-1$,

\medskip

\noindent
proving 2).

\vspace{2mm}

\noindent
$2)^{\rm tw}$ : \quad If \,\ $\heartsuit \, =$ I, \quad 
$s_{\lambda}^{\rm tw} \, = \, 
\dfrac{k_2+1}{M}-1
\,\ = \,\ 
\dfrac{1}{M}(M-2k_1)- 1
\,\ = \,\ 
- \, \dfrac{2k_1}{M}$

\vspace{3mm}

If \,\ $\heartsuit \, =$ III, \quad 
$s_{\lambda}^{\rm tw} \, = \, -\dfrac{k_2-1}{M}+1
\,\ = \,\ - \dfrac{1}{M}(M-2-2k_1) + 1
\,\ = \,\ \dfrac{2(k_1+1)}{M}$

\vspace{2mm}

\noindent
proving $2)^{\rm tw}$.
\end{proof}

\medskip

The above Proposition \ref{prop:2022-1204a} can be restated as follows:

\medskip

%(line=2538) %%
%(label=cor:2022-1222a) %%

%(label=eqn:2022-1222a) %%
%(label=eqn:2022-1222a2) %%

%(label=eqn:2022-1222b) %%
%(label=eqn:2022-1222b2) %%
\begin{cor} 
\label{cor:2022-1222a}
Let $\lambda=\Lambda^{(M)[K(1),0](\heartsuit)}_{k_1,k_2}$
such that $\heartsuit = $ {\rm I} or {\rm III} and 
$2k_1+k_2=M-1$. Then 
\begin{enumerate}
\item[{\rm 1)}] \,\ Putting 
\vspace{-5mm}
\begin{subequations}
\begin{equation}
j \,\ := \,\ \left\{
\begin{array}{ccl}
k_1+\frac12 &\quad & {\rm if} \quad \heartsuit \, = \, {\rm I} 
\\[2mm]
-(k_1+\frac12) & & {\rm if} \quad \heartsuit \, = \, {\rm III} 
\end{array}\right. 
\label{eqn:2022-1222a}
\end{equation}
we have
\begin{enumerate}
\item[{\rm (i)}] \quad $j \, \in \, \frac12 \, \zzz_{\rm odd}$ 
\quad {\rm s.t.} \quad 
$\left\{
\begin{array}{ccl}
\dfrac12 \, \leq \, j \, \leq \, \dfrac{M}{2}
& \quad & {\rm if} \quad \heartsuit \, = \, {\rm I} 
\\[4mm]
- \, \dfrac{M-1}{2} \, \leq \, j \, \leq \, - \, \dfrac12
& & {\rm if} \quad \heartsuit \, = \, {\rm III} 
\end{array}\right. $

\vspace{-13mm}

\begin{equation}
\,\
\label{eqn:2022-1222a2}
\end{equation}

\vspace{4mm}

\item[{\rm (ii)}] \quad $h_{\lambda} \,\ = \,\ 
\dfrac{j^2}{M} \, + \, \dfrac{1}{4M} \, - \, \dfrac12$
\item[{\rm (iii)}] \quad $s_{\lambda} \,\ = \,\ \dfrac{2j}{M}-1$
\end{enumerate}
\end{subequations}
\item[{\rm 2)}] Putting

\vspace{-8mm}

\begin{subequations}
\begin{equation}
j \,\ := \,\ \left\{
\begin{array}{ccl}
- \, k_1 &\quad & {\rm if} \quad \heartsuit \, = \, {\rm I} 
\\[2mm]
k_1+1 & & {\rm if} \quad \heartsuit \, = \, {\rm III} 
\end{array}\right. 
\label{eqn:2022-1222b}
\end{equation}
we have 
\begin{enumerate}
\item[{\rm (i)}] \quad $j \, \in \, \zzz$ \quad {\rm s.t.} \quad 
$\left\{
\begin{array}{ccl}
- \, \dfrac{M-1}{2} \, \leq \, j \, \leq \, 0
& \quad & {\rm if} \quad \heartsuit \, = \, {\rm I} 
\\[3mm]
1 \, \leq \, j \, \leq \, \dfrac{M}{2}
& & {\rm if} \quad \heartsuit \, = \, {\rm III} 
\end{array}\right. $

\vspace{-13mm}

\begin{equation}
\,\
\label{eqn:2022-1222b2}
\end{equation}

\vspace{4mm}
\item[{\rm (ii)}] \quad $h_{\lambda}^{\rm tw} \,\ = \,\ 
\dfrac{j^2}{M} \, + \, \dfrac{1}{4M} \, - \, \dfrac14$
\item[{\rm (iii)}] \quad $s_{\lambda}^{\rm tw} \,\ = \,\ \dfrac{2j}{M}$
\end{enumerate}
\end{subequations}
\end{enumerate}
\end{cor}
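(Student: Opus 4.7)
The proof is a direct translation of Proposition \ref{prop:2022-1204a} under the change of variable specified by \eqref{eqn:2022-1222a} and \eqref{eqn:2022-1222b}, together with a check that the indexing constraints \eqref{n4:eqn:2023-101a} on $(k_1,k_2)$ transfer into the ranges \eqref{eqn:2022-1222a2} and \eqref{eqn:2022-1222b2} for $j$. My plan is simply to verify each of the four items in turn; no genuine obstacle arises, since all work is elementary algebra once the substitution is made.

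For part 1) I would treat $\heartsuit = {\rm I}$ and $\heartsuit = {\rm III}$ separately. In case I, $j = k_1 + \tfrac12$, so Proposition \ref{prop:2022-1204a}.1 gives $h_\lambda = \tfrac{1}{M}(k_1+\tfrac12)^2 + \tfrac{1}{4M} - \tfrac12 = \tfrac{j^2}{M} + \tfrac{1}{4M} - \tfrac12$, and Proposition \ref{prop:2022-1204a}.2 gives $s_\lambda = \tfrac{2k_1+1}{M} - 1 = \tfrac{2j}{M}-1$. In case III, $j = -(k_1+\tfrac12)$, so $(k_1+\tfrac12)^2 = j^2$ yields the same expression for $h_\lambda$, while $s_\lambda = -\tfrac{2k_1+1}{M}-1 = \tfrac{2j}{M}-1$. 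The range \eqref{eqn:2022-1222a2} follows by feeding $2k_1+k_2 = M-1$ into \eqref{n4:eqn:2023-101a}: for I we have $k_1,k_2 \geq 0$, hence $0 \leq k_1 \leq \tfrac{M-1}{2}$, so $j = k_1+\tfrac12$ runs over the odd half-integers with $\tfrac12 \leq j \leq \tfrac{M}{2}$; for III we have $k_1\geq 0,\, k_2\geq 1$, so $0 \leq k_1 \leq \tfrac{M-2}{2}$ and $j = -(k_1+\tfrac12)$ runs over $-\tfrac{M-1}{2} \leq j \leq -\tfrac12$.

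For part 2) the same procedure applies to the twisted formulas. In case I, set $j = -k_1$, so $k_1^2 = j^2$ and Proposition \ref{prop:2022-1204a}.$1)^{\rm tw}$ gives $h_\lambda^{\rm tw} = \tfrac{j^2}{M} + \tfrac{1}{4M} - \tfrac14$; Proposition \ref{prop:2022-1204a}.$2)^{\rm tw}$ gives $s_\lambda^{\rm tw} = -\tfrac{2k_1}{M} = \tfrac{2j}{M}$. In case III, $j = k_1+1$, so $(k_1+1)^2 = j^2$ and $s_\lambda^{\rm tw} = \tfrac{2(k_1+1)}{M} = \tfrac{2j}{M}$ as required. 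The ranges in \eqref{eqn:2022-1222b2} follow from the same bounds $0 \leq k_1 \leq \tfrac{M-1}{2}$ (case I) and $0 \leq k_1 \leq \tfrac{M-2}{2}$ (case III), giving respectively $-\tfrac{M-1}{2} \leq j \leq 0$ and $1 \leq j \leq \tfrac{M}{2}$.

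The only thing that might look like a subtlety is verifying that $j$ lies in the claimed lattice $\tfrac12\zzz_{\rm odd}$ in part 1) and in $\zzz$ in part 2); but these are immediate from $k_1\in\zzz$ together with the respective shifts $\pm\tfrac12$ or $0,\,+1$. Consequently every assertion of the corollary reduces to a two-line calculation, and the statement is established case by case.
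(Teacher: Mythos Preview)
Your proposal is correct and follows essentially the same approach as the paper: both derive (ii) and (iii) by direct substitution into Proposition~\ref{prop:2022-1204a}, and both obtain the ranges (i) from the constraints on $(k_1,k_2)$ under $2k_1+k_2=M-1$. The only cosmetic difference is that the paper cites the already-specialized range \eqref{n4:eqn:2022-1213a}, whereas you rederive it from \eqref{n4:eqn:2023-101a}; this is the same computation.
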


\begin{proof} The claims 1)(i) and 2)(i) as to the range of $j$ follow 
from \eqref{n4:eqn:2022-1213a}.
Claims (ii) and (iii) of 1) and 2) follow immediately from 
Proposition \ref{prop:2022-1204a}.
\end{proof}

\section{Non-irreducible N=4 modules}
\label{sec:non-irred}
%(label=sec:non-irred) %%
%(line=2649) %%

In this section we consider the non-irreducible $\widehat{A}(1,1)$-module 
$$
\ddot{L}(\Lambda^{[K(m),m_2]}) \,\ := \,\ 
L(\Lambda^{[K(m),m_2]}) \oplus L(\Lambda^{[K(m),m_2+1]})
$$
where $m \in \nnn$ and $m_2 \in \zzz_{\geq 0}$ such that $m_2 \leq m-1$. 
Since
$$
\Lambda^{[K(m),m_2+1]} \, = \, 
- m\Lambda_0-\dfrac{m_2+1}{2}(\alpha_1+\alpha_3)
\, = \, 
\Lambda^{[m,m_2]} - \dfrac12 (\alpha_1+\alpha_3)
\, = \, 
\Lambda^{[m,m_2]} - \alpha_1
$$
and $\alpha_1$ is an odd root, the parity of the highest weight 
of $L(\Lambda^{[K(m),m_2+1]})$
is opposite to that of $L(\Lambda^{[K(m),m_2]})$. So the 
character and the super-character of $\ddot{L}(\Lambda^{[K(m),m_2]})$
are given by the following formulas:
%(label=n4:eqn:2022-1210d)  %%
%\vspace{-3mm} %%
\begin{equation}
\begin{array}{ccc}
{\rm ch}^{(+)}_{\ddot{L}(\Lambda^{[K(m),m_2]})} &=&
{\rm ch}^{(+)}_{L(\Lambda^{[K(m),m_2]})}
+
{\rm ch}^{(+)}_{L(\Lambda^{[K(m),m_2+1]})}
\\[3mm]
{\rm ch}^{(-)}_{\ddot{L}(\Lambda^{[K(m),m_2]})} &=&
{\rm ch}^{(-)}_{L(\Lambda^{[K(m),m_2]})}
-
{\rm ch}^{(-)}_{L(\Lambda^{[K(m),m_2+1]})}
\end{array}
\label{n4:eqn:2022-1210d}
\end{equation}

\medskip

We consider the corresponding principal admissible $\widehat{A}(1,1)$-modules
$$
\ddot{L}(\Lambda_{k_1,k_2}^{(M)[K(m), m_2]{\rm (\heartsuit)}})
\, := \, 
L(\Lambda_{k_1,k_2}^{(M)[K(m), m_2]{\rm (\heartsuit)}}) \oplus 
L(\Lambda_{k_1,k_2}^{(M)[K(m), m_2+1]{\rm (\heartsuit)}}) \hspace{5mm}
\big(\heartsuit = {\rm I} \sim {\rm IV}\big)
$$
and N=4 modules 
$$
\ddot{H}(\Lambda_{k_1,k_2}^{(M)[K(m), m_2]{\rm (\heartsuit)}})
\, := \, 
H(\Lambda_{k_1,k_2}^{(M)[K(m), m_2]{\rm (\heartsuit)}}) \oplus 
H(\Lambda_{k_1,k_2}^{(M)[K(m), m_2+1]{\rm (\heartsuit)}}) \hspace{5mm}
\big(\heartsuit = {\rm I} \sim {\rm IV}\big)
$$
Then, by \eqref{n4:eqn:2022-1210d}, the characters of these 
$\widehat{A}(1,1)$-modules and N=4 modules are given by 
%(label=n4:eqn:2022-1225a) %%
%(label=n4:eqn:2022-1225b) %%
\begin{equation}
\begin{array}{ccc}
{\rm ch}^{(\pm)}_{\ddot{L}(\Lambda^{(M)[K(m),m_2] (\heartsuit)}_{k_1,k_2})} 
&=&
{\rm ch}^{(\pm)}_{L(\Lambda^{(M)[K(m),m_2] (\heartsuit)}_{k_1,k_2})} 
\, \pm \, 
{\rm ch}^{(\pm)}_{L(\Lambda^{(M)[K(m),m_2+1] (\heartsuit)}_{k_1,k_2})} 
\\[4mm]
{\rm ch}^{(\pm) {\rm tw}}_{\ddot{L}(\Lambda^{(M)[K(m),m_2] (\heartsuit)}_{k_1,k_2})} 
&=&
{\rm ch}^{(\pm) {\rm tw}}_{L(\Lambda^{(M)[K(m),m_2] (\heartsuit)}_{k_1,k_2})} 
\, \pm \, 
{\rm ch}^{(\pm) {\rm tw}}_{L(\Lambda^{(M)[K(m),m_2+1] (\heartsuit)}_{k_1,k_2})} 
\end{array}
\label{n4:eqn:2022-1225a}
\end{equation}
and
\begin{equation}
\begin{array}{ccc}
{\rm ch}^{(\pm)}_{\ddot{H}(\Lambda^{(M)[K(m),m_2] (\heartsuit)}_{k_1,k_2})} 
&=&
{\rm ch}^{(\pm)}_{H(\Lambda^{(M)[K(m),m_2] (\heartsuit)}_{k_1,k_2})} 
\, \pm \, 
{\rm ch}^{(\pm)}_{H(\Lambda^{(M)[K(m),m_2+1] (\heartsuit)}_{k_1,k_2})} 
\\[4mm]
{\rm ch}^{(\pm) {\rm tw}}_{\ddot{H}(\Lambda^{(M)[K(m),m_2] (\heartsuit)}_{k_1,k_2})} 
&=&
{\rm ch}^{(\pm) {\rm tw}}_{H(\Lambda^{(M)[K(m),m_2] (\heartsuit)}_{k_1,k_2})} 
\, \pm \, 
{\rm ch}^{(\pm) {\rm tw}}_{H(\Lambda^{(M)[K(m),m_2+1] (\heartsuit)}_{k_1,k_2})} 
\end{array}
\label{n4:eqn:2022-1225b}
\end{equation}

\medskip

%(line=2746) %%
%(label=n4:lemma:2023-102a) %%
\begin{lemma} 
\label{n4:lemma:2023-102a}
The numerators of these N=4 modules 
$\ddot{H}(\Lambda^{(M)[K(m),m_2] (\heartsuit)}_{k_1,k_2})$ are given as follows:
\begin{enumerate}
\item[{\rm 1)}]
\begin{enumerate}
\item[{\rm (i)}] $\big[\overset{N=4}{R}{}^{(+)} \cdot 
{\rm ch}^{(+)}_{\ddot{H}(\Lambda^{(M)[K(m), m_2]{\rm (I)}}_{k_1,k_2})}
\big](\tau,z)
= \, 
(-1)^{m_2+1} \, \Psi^{[M,m,m_2+1; \frac12]}_{k_1+\frac12, \, -(k_1+k_2+\frac12); \, \frac12}
(\tau, z, z,0)$
% 1) (ii)
\item[{\rm (ii)}] $\big[\overset{N=4}{R}{}^{(+)} \cdot 
{\rm ch}^{(+)}_{\ddot{H}(\Lambda^{(M)[K(m), m_2]{\rm (II)}}_{k_1,k_2})}\big](\tau,z)
= \, 
(-1)^{m_2+1} \, 
\Psi^{[M,m,m_2+1; \frac12]}_{k_1-\frac12, \, -(k_1+k_2-\frac12); \, \frac12}
(\tau, -z, -z,0)$
% 1) (iii)
\item[{\rm (iii)}] $\big[\overset{N=4}{R}{}^{(+)} \cdot 
{\rm ch}^{(+)}_{\ddot{H}(\Lambda^{(M)[K(m), m_2]{\rm (III)}}_{k_1,k_2})}\big](\tau,z)
= \, 
(-1)^{m_2+1} \, 
\Psi^{[M,m,m_2+1; \frac12]}_{k_1+\frac12, \, -(k_1+k_2+\frac12); \, \frac12}
(\tau, -z, -z,0)$
% 1) (iv)
\item[{\rm (iv)}] $\big[
\overset{N=4}{R}{}^{(+)} \cdot 
{\rm ch}^{(+)}_{\ddot{H}(\Lambda^{(M)[K(m), m_2]{\rm (IV)}}_{k_1,k_2})}\big](\tau,z)
= \, 
(-1)^{m_2+1} \, 
\Psi^{[M,m,m_2+1; \frac12]}_{k_1-\frac12, \, -(k_1+k_2-\frac12); \, \frac12}
(\tau, z, z,0)$
\end{enumerate}
% 2)
\item[{\rm 2)}]
\begin{enumerate}
\item[{\rm (i)}] $\big[\overset{N=4}{R}{}^{(-)} \cdot 
{\rm ch}^{(-)}_{\ddot{H}(\Lambda^{(M)[K(m), m_2]{\rm (I)}}_{k_1,k_2})}\big](\tau,z)
\,\ = \,\ 
\Psi^{[M,m,m_2+1; 0]}_{k_1+\frac12, \, -(k_1+k_2+\frac12); \, \frac12}
(\tau, z, z,0)$
% 2) (ii)
\item[{\rm (ii)}] $\big[\overset{N=4}{R}{}^{(-)} \cdot 
{\rm ch}^{(-)}_{\ddot{H}(\Lambda^{(M)[K(m), m_2]{\rm (II)}}_{k_1,k_2})}\big](\tau,z)
\,\ = \,\ 
\Psi^{[M,m,m_2+1; 0]}_{k_1-\frac12, \, -(k_1+k_2-\frac12); \, \frac12}
(\tau, -z, -z,0)$
% 2) (iii)
\item[{\rm (iii)}] $\big[\overset{N=4}{R}{}^{(-)} \cdot 
{\rm ch}^{(-)}_{\ddot{H}(\Lambda^{(M)[K(m), m_2]{\rm (III)}}_{k_1,k_2})}\big](\tau,z)
\,\ = \,\ 
\Psi^{[M,m,m_2+1; 0]}_{k_1+\frac12, \, -(k_1+k_2+\frac12); \, \frac12}
(\tau, -z, -z,0)$
% 2) (iv)
\item[{\rm (iv)}] $\big[\overset{N=4}{R}{}^{(-)} \cdot 
{\rm ch}^{(-)}_{\ddot{H}(\Lambda^{(M)[K(m), m_2]{\rm (IV)}}_{k_1,k_2})}\big](\tau,z)
\,\ = \,\ 
\Psi^{[M,m,m_2+1; 0]}_{k_1-\frac12, \, -(k_1+k_2-\frac12); \, \frac12}
(\tau, z, z,0)$
\end{enumerate}
\end{enumerate}
\end{lemma}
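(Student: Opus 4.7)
The strategy is to combine the character identities from Proposition \ref{n4:prop:2022-1212c} across the two summands of $\ddot{H}$, collapse the resulting difference of $\Phi^{(A(1|1))}$'s via Note \ref{n4:note:2022-1223a}, and then recognize the outcome as a $\Psi$-function by matching it against the definition (2.2).

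First, I would write down \eqref{n4:eqn:2022-1225b} together with the explicit formulas from Proposition \ref{n4:prop:2022-1212c} in each of the four cases $\heartsuit=\mathrm{I},\mathrm{II},\mathrm{III},\mathrm{IV}$. In every case, the prefactors (the $q$-power, the exponential in $z$, and the arguments of $\Phi^{(A(1|1))}$) depend only on $(k_1,k_2,M,m)$ and are the same whether one uses parameter $m_2$ or $m_2+1$; only the superscript of $\Phi^{(A(1|1))}$ shifts from $m_2+1$ to $m_2+2$, and in the $(+)$-case there is an extra sign $(-1)^{m_2+2}=-(-1)^{m_2+1}$. Hence for the character we get, after combining the two summands with a $+$ sign,
\[
(-1)^{m_2+1}\,[\text{prefactor}]\cdot\bigl[\Phi^{(A(1|1))[m,m_2+1]}-\Phi^{(A(1|1))[m,m_2+2]}\bigr],
\]
while for the super-character we combine with a $-$ sign and the $(-1)^{m_2+1}$ factor is absent from the start, yielding $[\text{prefactor}]\cdot\bigl[\Phi^{(A(1|1))[m,m_2+1]}-\Phi^{(A(1|1))[m,m_2+2]}\bigr]$. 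By Note \ref{n4:note:2022-1223a} the bracket equals $\Phi^{[m,m_2+1]}$ evaluated at the same arguments.

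Next, I would compare the resulting expression with the definition
\[
\Psi^{[M,m;s;\varepsilon]}_{j,k;\varepsilon'}(\tau,z_1,z_2,t)
= q^{\frac{m}{M}jk}e^{\frac{2\pi im}{M}(kz_1+jz_2)}\,
\Phi^{[m;s]}\!\Big(M\tau,z_1+j\tau+\varepsilon,z_2+k\tau-\varepsilon,\tfrac{t}{M}\Big).
\]
For case 1)(i), setting $(z_1,z_2)=(z,z)$, $j=k_1+\tfrac12$, $k=-(k_1+k_2+\tfrac12)$, $\varepsilon=\varepsilon'=\tfrac12$ and $s=m_2+1$, one checks directly that the prefactor reads $q^{-\frac{m}{M}(k_1+\frac12)(k_1+k_2+\frac12)}e^{-\frac{2\pi im}{M}k_2 z}$, as required, while the $z_1$-argument of $\Phi^{[m,m_2+1]}$ is precisely $z+(k_1+\tfrac12)\tau+\tfrac12$. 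The $z_2$-argument coming from the $\Psi$-definition is $z-(k_1+k_2+\tfrac12)\tau-\tfrac12$, while Proposition \ref{n4:prop:2022-1212c} gives $z-(k_1+k_2+\tfrac12)\tau+\tfrac12$; since $m,s\in\zzz$, the function $\Phi^{[m,s]}(M\tau,\cdot,\cdot,0)$ is easily seen to be $1$-periodic in each $z_i$ (directly from the series defining $\Phi^{[m,s]}_1$ and $\Phi^{[m,s]}_2$ via the proof of Note \ref{n4:note:2022-1223a}), so the two expressions coincide. Cases (ii), (iii), (iv) of 1) and all cases of 2) are treated by the same identification; the only changes are $(z_1,z_2)\mapsto(\pm z,\pm z)$, the sign of $k_1\pm\tfrac12$ and $k_1+k_2\pm\tfrac12$ according to Proposition \ref{n4:prop:2022-1212c}, and the absence of the $+\tfrac12$ shifts for the super-character, which corresponds to taking $\varepsilon=0$ rather than $\varepsilon=\tfrac12$.

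The main (and essentially only) bookkeeping obstacle is to confirm that in each of the eight subcases the exponent of $q$, the exponent of the $e^{2\pi i\cdots}$ factor, and the two $\Phi^{[m,m_2+1]}$-arguments given by Proposition \ref{n4:prop:2022-1212c} match those produced by the $\Psi$-definition with the indices $(j,k;\varepsilon,\varepsilon')$ listed in the statement, modulo the innocuous $1$-periodicity noted above. This is a routine but tedious term-by-term check, and having verified case 1)(i) explicitly, each remaining case follows by an identical computation with the obvious sign changes.
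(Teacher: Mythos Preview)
Your proposal is correct and follows exactly the paper's own approach: combine the two summands via \eqref{n4:eqn:2022-1225b}, insert the formulas from Proposition~\ref{n4:prop:2022-1212c}, collapse the difference $\Phi^{(A(1|1))[m,m_2+1]}-\Phi^{(A(1|1))[m,m_2+2]}$ to $\Phi^{[m,m_2+1]}$ by Note~\ref{n4:note:2022-1223a}, and recognize the result as the $\Psi$-function from \eqref{eqn:2022-1017a2}. Your remark about the $1$-periodicity of $\Phi^{[m,s]}$ in $z_2$ (valid since $m\in\nnn$) needed to reconcile the $+\tfrac12$ from Proposition~\ref{n4:prop:2022-1212c} with the $-\varepsilon=-\tfrac12$ in the definition of $\Psi$ is a point the paper's proof passes over silently, so your argument is in fact slightly more explicit.
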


\begin{proof} \,\ These formulas are obtained easily from 
\eqref{n4:eqn:2022-1225b} and 
Note \ref{n4:note:2022-1223a} and
Proposition \ref{n4:prop:2022-1212c} 
and the formula \eqref{eqn:2022-1017a2}.
In the case 1) (i) and 2) (i), its calculation is as follows:

\medskip

\noindent
1) (i) \,\ By \eqref{n4:eqn:2022-1225b} and 
Proposition \ref{n4:prop:2022-1212c}, one has 
%(line=2721) %%
{\allowdisplaybreaks
\begin{eqnarray*}
& & \hspace{-10mm}
\big[\overset{N=4}{R}{}^{(+)} \cdot
{\rm ch}^{(+)}_{\ddot{H}(\Lambda^{(M)[K(m), m_2]{\rm (I)}}_{k_1,k_2})}\big](\tau,z)
\\[1mm]
&=& 
\big[\overset{N=4}{R}{}^{(+)} \cdot
{\rm ch}^{(+)}_{H(\Lambda^{(M)[K(m), m_2]{\rm (I)}}_{k_1,k_2})}\big](\tau,z)
+
\big[\overset{N=4}{R}{}^{(+)} \cdot
{\rm ch}^{(+)}_{H(\Lambda^{(M)[K(m), m_2+1]{\rm (I)}}_{k_1,k_2})}\big](\tau,z)
\\[2mm]
&=&
e^{-\frac{2\pi im}{M}k_2z} \, 
q^{-\frac{m}{M}(k_1+\frac12)(k_1+k_2+\frac12)}
\\[2mm]
& &
\times \,\ \Big\{(-1)^{m_2+1} \, \Phi^{(A(1|1))[m,m_2+1]}
(M\tau, \,\ z+(k_1+\tfrac12)\tau+\tfrac12, \,\ 
z-(k_1+k_2+\tfrac12)\tau+\tfrac12, \,\ 0)
\\[1mm]
& & \hspace{8mm} 
+ \,\ (-1)^{m_2+2} \, \Phi^{(A(1|1))[m,m_2+2]}
(M\tau, \,\ z+(k_1+\tfrac12)\tau+\tfrac12, \,\ 
z-(k_1+k_2+\tfrac12)\tau+\tfrac12, \,\ 0) \Big\}
\end{eqnarray*}}
Then by Note \ref{n4:note:2022-1223a} and \eqref{eqn:2022-1017a2}, 
this is rewitten as follows:
{\allowdisplaybreaks
\begin{eqnarray*}
&=&
(-1)^{m_2+1} \, 
e^{-\frac{2\pi im}{M}k_2z} \, 
q^{-\frac{m}{M}(k_1+\frac12)(k_1+k_2+\frac12)}
\\[3mm]
& &
\times \,\ 
\Phi^{[m,m_2+1]}(M\tau, \,\ z+(k_1+\tfrac12)\tau+\tfrac12, \,\ 
z-(k_1+k_2+\tfrac12)\tau+\tfrac12, \,\ 0)
\\[1mm]
&=&
(-1)^{m_2+1} \, 
\Psi^{[M,m,m_2+1; \frac12]}_{k_1+\frac12, \, -(k_1+k_2+\frac12); \, \frac12}
(\tau, z, z,0)
\end{eqnarray*}}
proving 1) (i).

\medskip

\noindent
2) (i) \,\ By \eqref{n4:eqn:2022-1225b} and 
Proposition \ref{n4:prop:2022-1212c}, one has 
%(line=2784) %%
{\allowdisplaybreaks
\begin{eqnarray*}
& & \hspace{-10mm}
\big[\overset{N=4}{R}{}^{(-)} \cdot
{\rm ch}^{(-)}_{\ddot{H}(\Lambda^{(M)[K(m), m_2]{\rm (I)}}_{k_1,k_2})}\big](\tau,z)
\\[1mm]
&=& 
\big[\overset{N=4}{R}{}^{(-)} \cdot
{\rm ch}^{(-)}_{H(\Lambda^{(M)[K(m), m_2]{\rm (I)}}_{k_1,k_2})}\big](\tau,z)
-
\big[\overset{N=4}{R}{}^{(-)} \cdot
{\rm ch}^{(-)}_{H(\Lambda^{(M)[K(m), m_2]{\rm (I)}}_{k_1,k_2})}\big](\tau,z)
\\[2mm]
&=&
e^{-\frac{2\pi im}{M}k_2z} \, 
q^{-\frac{m}{M}(k_1+\frac12)(k_1+k_2+\frac12)}
\\[2mm]
& &
\times \,\ \Big\{\Phi^{(A(1|1))[m,m_2+1]}
(M\tau, \,\ z+(k_1+\tfrac12)\tau, \,\ z-(k_1+k_2+\tfrac12)\tau, \,\ 0)
\\[1mm]
& & \hspace{2.7mm} 
- \,\ \Phi^{(A(1|1))[m,m_2+2]}
(M\tau, \,\ z+(k_1+\tfrac12)\tau, \,\ 
z-(k_1+k_2+\tfrac12)\tau, \,\ 0) \Big\}
\end{eqnarray*}}
Then by Note \ref{n4:note:2022-1223a} and \eqref{eqn:2022-1017a2}, 
this is rewitten as follows:
{\allowdisplaybreaks
\begin{eqnarray*}
&=&
e^{-\frac{2\pi im}{M}k_2z} \, 
q^{-\frac{m}{M}(k_1+\frac12)(k_1+k_2+\frac12)}
\Phi^{[m,m_2+1]}
(M\tau, \,\ z+(k_1+\tfrac12)\tau, \,\ z-(k_1+k_2+\tfrac12)\tau, \,\ 0)
\\[1mm]
&=&
\Psi^{[M,m,m_2+1; 0]}_{k_1+\frac12, \, -(k_1+k_2+\frac12); \, \frac12}
(\tau, z, z,0)
\end{eqnarray*}}
proving 2) (i).
The proof of the rests is quite similar.
\end{proof}

\medskip

As to the twisted numerators, we have the following formulas:

\medskip

%(line=2931) %%
%(label=n4:lemma:2023-102b) %%
\begin{lemma} 
\label{n4:lemma:2023-102b}
The twisted numerators of N=4 modules 
$\ddot{H}^{\rm tw}(\Lambda^{(M)[K(m),m_2] (\heartsuit)}_{k_1,k_2})$ 
are given as follows:
\begin{enumerate}
\item[{\rm 1)}]
\begin{enumerate}
\item[{\rm (i)}] $\big[\overset{N=4}{R}{}^{(+){\rm tw}} \cdot 
{\rm ch}^{(+){\rm tw}}_{\ddot{H}(\Lambda^{(M)[K(m), m_2]{\rm (I)}}_{k_1,k_2})}
\big](\tau,z)
\,\ = \,\ 
(-1)^{m_2+1} \, 
\Psi^{[M,m,m_2+1; \frac12]}_{k_1, \, -(k_1+k_2+1); \, 0}(\tau, -z, -z,0)$
% 1) (ii)
\item[{\rm (ii)}] $\big[\overset{N=4}{R}{}^{(+){\rm tw}} \cdot 
{\rm ch}^{(+){\rm tw}}_{\ddot{H}(\Lambda^{(M)[K(m), m_2]{\rm (II)}}_{k_1,k_2})}\big](\tau,z)
\,\ = \,\ 
(-1)^{m_2+1} \, 
\Psi^{[M,m,m_2+1; \frac12]}_{k_1, \, -(k_1+k_2-1); \, 0}(\tau, z, z,0)$
% 1) (iii)
\item[{\rm (iii)}] $\big[\overset{N=4}{R}{}^{(+){\rm tw}} \cdot 
{\rm ch}^{(+){\rm tw}}_{\ddot{H}(\Lambda^{(M)[K(m), m_2]{\rm (III)}}_{k_1,k_2})}\big](\tau,z)
\,\ = \,\ 
(-1)^{m_2+1} \, 
\Psi^{[M,m,m_2+1; \frac12]}_{k_1+1, \, -(k_1+k_2); \, 0}(\tau, z, z,0)$
% 1) (iv)
\item[{\rm (iv)}] $\big[
\overset{N=4}{R}{}^{(+){\rm tw}} \cdot 
{\rm ch}^{(+){\rm tw}}_{\ddot{H}(\Lambda^{(M)[K(m), m_2]{\rm (IV)}}_{k_1,k_2})}\big](\tau,z)
\,\ = \,\ 
(-1)^{m_2+1} \, 
\Psi^{[M,m,m_2+1; \frac12]}_{k_1-1, \, -(k_1+k_2); \, 0}(\tau, -z, -z,0)$
\end{enumerate}
% 2)
\item[{\rm 2)}]
\begin{enumerate}
\item[{\rm (i)}] $\big[\overset{N=4}{R}{}^{(-){\rm tw}} \cdot 
{\rm ch}^{(-){\rm tw}}_{\ddot{H}(\Lambda^{(M)[K(m), m_2]{\rm (I)}}_{k_1,k_2})}\big](\tau,z)
\,\ = \,\ 
\Psi^{[M,m,m_2+1; 0]}_{k_1, \, -(k_1+k_2+1); \, 0}(\tau, -z, -z,0)$
% 2) (ii)
\item[{\rm (ii)}] $\big[\overset{N=4}{R}{}^{(-){\rm tw}} \cdot 
{\rm ch}^{(-){\rm tw}}_{\ddot{H}(\Lambda^{(M)[K(m), m_2]{\rm (II)}}_{k_1,k_2})}\big](\tau,z)
\,\ = \,\ 
\Psi^{[M,m,m_2+1; 0]}_{k_1, \, -(k_1+k_2-1); \, 0}(\tau, z, z,0)$
% 2) (iii)
\item[{\rm (iii)}] $\big[\overset{N=4}{R}{}^{(-){\rm tw}} \cdot 
{\rm ch}^{(-){\rm tw}}_{\ddot{H}(\Lambda^{(M)[K(m), m_2]{\rm (III)}}_{k_1,k_2})}\big](\tau,z)
\,\ = \,\ 
\Psi^{[M,m,m_2+1; 0]}_{k_1+1, \, -(k_1+k_2); \, 0}(\tau, z, z,0)$
% 2) (iv)
\item[{\rm (iv)}] $\big[\overset{N=4}{R}{}^{(-){\rm tw}} \cdot 
{\rm ch}^{(-){\rm tw}}_{\ddot{H}(\Lambda^{(M)[K(m), m_2]{\rm (IV)}}_{k_1,k_2})}\big](\tau,z)
\,\ = \,\ 
\Psi^{[M,m,m_2+1; 0]}_{k_1-1, \, -(k_1+k_2); \, 0}(\tau, -z, -z,0)$
\end{enumerate}
\end{enumerate}
\end{lemma}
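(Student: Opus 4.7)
The plan is to follow line-by-line the template of the proof of Lemma~\ref{n4:lemma:2023-102a}, substituting the twisted ingredients throughout. By the second line of~\eqref{n4:eqn:2022-1225b}, the twisted numerator of $\ddot{H}(\Lambda^{(M)[K(m),m_2](\heartsuit)}_{k_1,k_2})$ equals the sum (for characters) or difference (for supercharacters) of the twisted numerators of the two summands $H(\Lambda^{(M)[K(m),m_2](\heartsuit)}_{k_1,k_2})$ and $H(\Lambda^{(M)[K(m),m_2+1](\heartsuit)}_{k_1,k_2})$. I substitute the closed forms supplied by Proposition~\ref{n4:prop:2022-1212d}: both summands share a common prefactor depending only on $\heartsuit,m,M,k_1,k_2,z$, and differ only in whether the level appearing in $\Phi^{(A(1|1))}$ is $m_2+1$ or $m_2+2$.

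After factoring out this common prefactor, the combination of the two $\Phi^{(A(1|1))}$ terms becomes exactly the left-hand side of Note~\ref{n4:note:2022-1223a}, so it collapses to a single $\Phi^{[m,m_2+1]}$ evaluated at the arguments listed in Proposition~\ref{n4:prop:2022-1212d}. At this point the numerator has the shape (prefactor)$\,\cdot\,\Phi^{[m,m_2+1]}$, and the final step is to recognise it as a particular $\Psi^{[M,m,m_2+1;\varepsilon]}_{j,k;\varepsilon'}$ through the defining equation~\eqref{eqn:2022-1017a2}. I would read off $(j,k,\varepsilon,\varepsilon')$ by matching the two spatial arguments $z_1+j\tau+\varepsilon$ and $z_2+k\tau-\varepsilon$ in~\eqref{eqn:2022-1017a2} against the spatial arguments of $\Phi^{(A(1|1))[m,m_2+1]}$ from Proposition~\ref{n4:prop:2022-1212d}, and then check that the intrinsic prefactor $q^{\frac{m}{M}jk}\,e^{\frac{2\pi im}{M}(kz_1+jz_2)}$ of~\eqref{eqn:2022-1017a2} reproduces the exponential and $q$-prefactors recorded in Proposition~\ref{n4:prop:2022-1212d}. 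Because the $w_0$-twist already produced shifts of the form $k_1\pm\tfrac12$ in the non-twisted proposition, their combination with the $(z+\tfrac{\tau}{2},z-\tfrac{\tau}{2})$ evaluation of the quantum Hamiltonian reduction~\eqref{n4:eqn:2022-1212c2} absorbs all the half-integer pieces, so in the twisted case the parameters $(j,k)$ come out as integers and $\varepsilon'=0$, exactly as claimed.

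The only mildly delicate point is that the second and third spatial arguments in Proposition~\ref{n4:prop:2022-1212d} carry a $+\tfrac12$ shift, whereas the definition~\eqref{eqn:2022-1017a2} carries $+\varepsilon$ and $-\varepsilon$ with $\varepsilon=\tfrac12$; since $+\tfrac12$ and $-\tfrac12$ differ by an integer, the identification relies on the $1$-periodicity of $\Phi^{[m,s]}$ in its second and third spatial variables. For $m=1$ this is immediate from the product formula~\eqref{n4:eqn:2022-1210a} together with the $1$-periodicity of $\vartheta_{11}$, and the analogue for general $m$ is a standard feature of the level-$m$ function $\Phi^{[m,s]}$ recorded in \cite{W2022a}. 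Once this periodicity is invoked, case~(1)(i) can be carried out explicitly and the remaining seven cases follow by the identical calculation, with only the signs in $(\pm z,\pm z)$ and the overall $(-1)^{m_2+1}$ (absent in the supercharacter case because of the minus sign in~\eqref{n4:eqn:2022-1225b}) adjusted according to the value of $\heartsuit$. The main obstacle is therefore purely bookkeeping; there is no substantively new input beyond the machinery already in place.
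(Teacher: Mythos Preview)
Your proposal is correct and follows essentially the same approach as the paper's own proof: apply \eqref{n4:eqn:2022-1225b}, substitute Proposition~\ref{n4:prop:2022-1212d}, collapse via Note~\ref{n4:note:2022-1223a}, and identify the result through \eqref{eqn:2022-1017a2}. Your remark about the $+\tfrac12$ versus $-\tfrac12$ discrepancy in the second spatial argument, and its resolution by the $1$-periodicity of $\Phi^{[m,s]}$ in $z_1,z_2$ for integer $m,s$, is a point the paper's proof leaves implicit (it simply writes down the $\Psi$ after the $\Phi$ line without comment), so you are in fact being more careful than the original here.
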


\begin{proof} \,\ These formulas are obtained easily from 
\eqref{n4:eqn:2022-1225b} and 
Note \ref{n4:note:2022-1223a} and
Proposition \ref{n4:prop:2022-1212d}
and the formula \eqref{eqn:2022-1017a2}.
In the case 1) (i) and 2) (i), its calculation is as follows:

\medskip

\noindent
1) (i) \,\ By \eqref{n4:eqn:2022-1225b} and 
Proposition \ref{n4:prop:2022-1212d}, one has 
%(line=2898) %%
{\allowdisplaybreaks
\begin{eqnarray*}
& & \hspace{-10mm}
\big[\overset{N=4}{R}{}^{(+){\rm tw}} \cdot
{\rm ch}^{(+){\rm tw}}_{\ddot{H}(\Lambda^{(M)[K(m), m_2]{\rm (I)}}_{k_1,k_2})}\big](\tau,z)
\\[1mm]
&=& 
\big[\overset{N=4}{R}{}^{(+){\rm tw}} \cdot
{\rm ch}^{(+){\rm tw}}_{H(\Lambda^{(M)[K(m), m_2]{\rm (I)}}_{k_1,k_2})}\big](\tau,z)
+
\big[\overset{N=4}{R}{}^{(+){\rm tw}} \cdot
{\rm ch}^{(+){\rm tw}}_{H(\Lambda^{(M)[K(m), m_2+1]{\rm (I)}}_{k_1,k_2})}\big](\tau,z)
\\[2mm]
&=&
e^{\frac{2\pi im}{M}(k_2+1)z} \, q^{-\frac{m}{M}k_1(k_1+k_2+1)}
\\[2mm]
& &
\times \,\ \Big\{(-1)^{m_2+1} \, \Phi^{(A(1|1))[m,m_2+1]}
(M\tau, \,\ -z+k_1\tau+\tfrac12, \,\ -z-(k_1+k_2+1)\tau+\tfrac12, \,\ 0)
\\[1mm]
& & \hspace{7mm} 
+ \,\ 
(-1)^{m_2+2} \, \Phi^{(A(1|1))[m,m_2+2]}
(M\tau, \,\ -z+k_1\tau+\tfrac12, \,\ -z-(k_1+k_2+1)\tau+\tfrac12, \,\ 0)
\end{eqnarray*}}
Then by Note \ref{n4:note:2022-1223a} and \eqref{eqn:2022-1017a2}, 
this is rewitten as follows:
{\allowdisplaybreaks
\begin{eqnarray*}
&=&
(-1)^{m_2+1} \, 
e^{\frac{2\pi im}{M}(k_2+1)z} \, q^{-\frac{m}{M}k_1(k_1+k_2+1)}
\\[3mm]
& &
\times \,\ 
\Phi^{[m,m_2+1]}
(M\tau, \,\ -z+k_1\tau+\tfrac12, \,\ -z-(k_1+k_2+1)\tau+\tfrac12, \,\ 0)
\\[1mm]
&=&
(-1)^{m_2+1} \, 
\Psi^{[M,m,m_2+1; \frac12]}_{k_1, \, -(k_1+k_2+1); \, 0}(\tau, -z, -z,0)
\end{eqnarray*}}
proving 1) (i).

\medskip

\noindent
2) (i) \,\ By \eqref{n4:eqn:2022-1225b} and 
Proposition \ref{n4:prop:2022-1212d}, one has 
%(line=3054) %%
{\allowdisplaybreaks
\begin{eqnarray*}
& & \hspace{-10mm}
\big[\overset{N=4}{R}{}^{(-){\rm tw}} \cdot
{\rm ch}^{(-){\rm tw}}_{\ddot{H}(\Lambda^{(M)[K(m), m_2]{\rm (I)}}_{k_1,k_2})}\big](\tau,z)
\\[1mm]
&=& 
\big[\overset{N=4}{R}{}^{(-){\rm tw}} \cdot
{\rm ch}^{(-){\rm tw}}_{H(\Lambda^{(M)[K(m), m_2]{\rm (I)}}_{k_1,k_2})}\big](\tau,z)
-
\big[\overset{N=4}{R}{}^{(-){\rm tw}} \cdot
{\rm ch}^{(-){\rm tw}}_{H(\Lambda^{(M)[K(m), m_2+1]{\rm (I)}}_{k_1,k_2})}\big](\tau,z)
\\[2mm]
&=&
e^{\frac{2\pi im}{M}(k_2+1)z} \, q^{-\frac{m}{M}k_1(k_1+k_2+1)}
\\[2mm]
& &
\times \,\ \Big\{\Phi^{(A(1|1))[m,m_2+1]}
(M\tau, \,\ -z+k_1\tau, \,\ -z-(k_1+k_2+1)\tau, \,\ 0)
\\[1mm]
& & \hspace{2.7mm} 
- \,\ \Phi^{(A(1|1))[m,m_2+2]}
(M\tau, \,\ -z+k_1\tau, \,\ -z-(k_1+k_2+1)\tau, \,\ 0)
\Big\}
\end{eqnarray*}}
Then by Note \ref{n4:note:2022-1223a} and \eqref{eqn:2022-1017a2}, 
this is rewitten as follows:
{\allowdisplaybreaks
\begin{eqnarray*}
&=&
e^{\frac{2\pi im}{M}(k_2+1)z} \, q^{-\frac{m}{M}k_1(k_1+k_2+1)}
\Phi^{[m,m_2+1]}
(M\tau, \,\ -z+k_1\tau, \,\ -z-(k_1+k_2+1)\tau, \,\ 0)
\\[1mm]
&=&
\Psi^{[M,m,m_2+1; 0]}_{k_1, \, -(k_1+k_2+1); \, 0}(\tau, -z, -z,0)
\end{eqnarray*}}
proving 2) (i). The proof of the rests is quite similar.
\end{proof}

\section{Vanishing of the quantum Hamiltonian reduction}
\label{sec:vanishing}
%(line=3128) %%

As to the vanishing of $W(\widehat{\ggg}, f,x)_K$-module $H(\lambda)$ 
and the twisted module $H^{\rm tw}(\lambda)$ obtained from
the quantum Hamiltonian reduction of a highest weight 
$\widehat{\ggg}$-module $L(\lambda)$, 
where $f=e_{-\theta}$ and $x=\frac12\theta$, 
the following lemma holds:

\medskip

%(line=3141) %%
%(label=lemma:2022-1220a) %%
\begin{lemma} 
\label{lemma:2022-1220a}
Assume that $\lambda$ is integrable with respect to 
$\alpha_0 = \delta-\theta$,
namely $(\lambda+\rho|\delta-\theta) \in \nnn$. Then 
\begin{enumerate}
\item[{\rm 1)}] \,\ $H(V) \, = \, \{0\} $
\item[{\rm 2)}] \,\ $H^{\rm tw}(V) \, = \, \{0\} $ \,\ 
if $\ggg=A(1,1)$ and the twist is given by 
$w_0 \, = \, r_{\alpha_2}t_{\frac12 \alpha_2}$. 
\end{enumerate}
\end{lemma}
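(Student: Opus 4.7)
The plan is a short reflection argument. By the QHR identities \eqref{n4:eqn:2022-1212c}--\eqref{n4:eqn:2022-1212c2}, the (super)character of $H(\lambda)$ (resp.\ the twisted module) is controlled by the specialization of $\widehat R^{(\pm)}\cdot{\rm ch}^{(\pm)}_{L(\lambda)}$ at the point $h_{\rm QHR}$ with coordinates $(\tau,z+\tfrac{\tau}{2},z-\tfrac{\tau}{2},\tfrac{\tau}{4})$ (resp.\ at $w_0 h_{\rm QHR}$). I would first check that both evaluation points lie on the hyperplane $\alpha_0=0$. Using Note \ref{n4:note:2022-1203a}, one finds $\delta(h_{\rm QHR})=-2\pi i\tau=\theta(h_{\rm QHR})$, so $\alpha_0(h_{\rm QHR})=(\delta-\theta)(h_{\rm QHR})=0$. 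For the twisted point, the inputs $(\alpha_0|\alpha_2)=0$ and $(\alpha_0|\delta)=0$ force both $r_{\alpha_2}$ and $t_{\frac12\alpha_2}$ to fix $\alpha_0\in\hhh^{*}$, hence $w_0^{-1}(\alpha_0)=\alpha_0$ and $\alpha_0(w_0 h_{\rm QHR})=0$ as well. In both cases $r_{\alpha_0}$ fixes the evaluation point.

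The core step is to use $\alpha_0$-integrability to show that $\widehat R^{(\pm)}\cdot{\rm ch}^{(\pm)}_{L(\lambda)}$ is antisymmetric under $r_{\alpha_0}$ acting on $\hhh$. Since $(\lambda+\rho|\alpha_0)\in\nnn$, the $sl_2$-subalgebra associated with the even real simple root $\alpha_0$ acts integrably on $L(\lambda)$ (via the singular vector $f_{\alpha_0}^{(\lambda+\rho|\alpha_0)}v_\lambda=0$), so ${\rm ch}^{(\pm)}_{L(\lambda)}$ is $r_{\alpha_0}$-invariant; meanwhile the affine denominator $\widehat R^{(\pm)}$ is $r_{\alpha_0}$-antisymmetric, since the factor $(1-e^{-\alpha_0})$ changes sign while the remaining even- and odd-root factors are merely permuted. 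An $r_{\alpha_0}$-antisymmetric function vanishes on the fixed hyperplane $\alpha_0=0$, giving
\[
\bigl[\widehat R^{(\pm)}\cdot{\rm ch}^{(\pm)}_{L(\lambda)}\bigr](h_{\rm QHR})\;=\;\bigl[\widehat R^{(\pm){\rm tw}}\cdot{\rm ch}^{(\pm){\rm tw}}_{L(\lambda)}\bigr](h_{\rm QHR})\;=\;0.
\]
Since the $N=4$ denominators \eqref{n4:eqn:2022-1219c1}--\eqref{n4:eqn:2022-1219c4} are not identically zero, this forces ${\rm ch}^{(\pm)}_{H(\lambda)}={\rm ch}^{(\pm){\rm tw}}_{H(\lambda)}=0$ as functions of $(\tau,z)$, and hence $H(V)=H^{\rm tw}(V)=\{0\}$.

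The main obstacle is making the $r_{\alpha_0}$-antisymmetry rigorous for a possibly atypical highest weight module over the superalgebra $\widehat A(1,1)$, where the classical Weyl--Kac formula does not apply directly. Because $\alpha_0=\delta-\theta$ is an even real root, $r_{\alpha_0}$ preserves the set of positive odd roots, so the odd-root factors appearing in any Kac--Wakimoto-type character formula for $L(\lambda)$ are merely permuted and the antisymmetry is preserved; this can be verified directly on the BGG-type resolution of $L(\lambda)$ by Verma modules $M(w\cdot\lambda)$ with $w$ in the subgroup of the integral Weyl group generated by $r_{\alpha_0}$, whose Euler characteristic is manifestly antisymmetric in the shifted $r_{\alpha_0}$-action. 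Once this is secured, the proof for both parts 1) and 2) reduces to the two elementary computations of Step 1.
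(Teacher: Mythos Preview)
Your proof is correct and follows essentially the same approach as the paper: both reduce to showing that the evaluation point $h_{\rm QHR}$ (and its $w_0$-image) lies on the hyperplane $\alpha_0=0$, and that integrability with respect to $\alpha_0$ forces the numerator $\widehat R^{(\pm)}\cdot{\rm ch}^{(\pm)}_{L(\lambda)}$ to vanish there. The only difference is in how this last vanishing is phrased: the paper simply observes that integrability with respect to the even real root $\alpha_0$ makes the numerator divisible by $(1-e^{-\alpha_0})$, which is zero when $\alpha_0(h)=0$; you instead argue via $r_{\alpha_0}$-antisymmetry of the numerator. The paper's formulation is more direct and sidesteps what you call the ``main obstacle'' entirely, since divisibility by $(1-e^{-\alpha_0})$ is an immediate consequence of the integrable $sl_2$-action on $L(\lambda)$ along the even root $\alpha_0$ and does not require any analysis of how $r_{\alpha_0}$ interacts with odd roots or atypicality.
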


\begin{proof} In the case $\lambda$ is integrable with respect to 
$\delta-\theta$, the numerator $\widehat{R} \cdot {\rm ch}_{L(\lambda)}$ 
of $L(\lambda)$ is divisible by $1-e^{-(\delta-\theta)}$, namely 
$\widehat{R}^{(+)} \cdot {\rm ch}^{(+)}_{L(\lambda)}$ is written in the following form:
\begin{equation}
\widehat{R}^{(+)} \cdot {\rm ch}_{L(\lambda)}^{(+)} \,\ = \,\ 
(1-e^{-(\delta-\theta)}) \cdot \, g
\qquad {\rm for} \quad {}^{\exists}g \, \in \, 
\ccc [[e^{-\alpha} \,\ ; \,\ \alpha \in \widehat{\Delta}_+]]
\label{n4:eqn:2022-1228a}
\end{equation}
An element $h$ in the Cartan subalgebra of N=4 SCA is given by 
\eqref{n4:eqn:2022-1212b} and the action of $w_0$ is given
by \eqref{n4:eqn:2022-1230a}: 
\begin{eqnarray*}
h \hspace{7mm} &=& 
2\pi i\, \Big\{-\tau(\Lambda_0+x)-z\alpha_2 -\frac{\tau}{4}\delta\Big\}
\\[1mm]
w_0^{-1}(\delta-\theta) &=& \delta-\theta
\end{eqnarray*}
Then, since $(\Lambda+x|\delta-\theta)=0$ and $(\alpha_2|\theta)=0$, one has
\begin{equation}\left\{
\begin{array}{lcccccc}
(\delta-\theta| h) &=& 0 & & & &
\\[2mm]
(\delta-\theta| w_0(h)) &=& (w_0^{-1}(\delta-\theta)| h)
&=& (\delta-\theta| h) &=& 0
\end{array} \right.
\label{n4:eqn:2023-110a}
\end{equation}
Then by \eqref{n4:eqn:2022-1228a} and \eqref{n4:eqn:2023-110a}, one has 
$$
\begin{array}{rclcc}
\overset{N=4}{R}{}^{(+)} \cdot {\rm ch}_{H(\lambda)}^{(+)}(\tau,z) 
&=& 
\widehat{R}^{(+)} \cdot {\rm ch}_{L(\lambda)}^{(+)}(h) &=& 0
\\[3mm]
\overset{N=4}{R}{}^{(+){\rm tw}} \cdot 
{\rm ch}_{H(\lambda)}^{(+){\rm tw}}(\tau,z) 
&=& 
\widehat{R}^{(+)} \cdot {\rm ch}_{L(\lambda)}^{(+)}(w_0(h)) &=& 0
\end{array}
$$
proving Lemma \ref{lemma:2022-1220a}.
\end{proof}

\medskip

%(line=3170) %%
%(label=lemma:2022-1221a) 55
\begin{lemma} 
\label{lemma:2022-1221a}
For $\heartsuit \, = $ {\rm I} $\sim$ {\rm IV}, \,\ 
$(\Lambda^{(M)[K(m),m_2](\heartsuit)}_{k_1,k_2}+\rho \, | \, 
\alpha_0)$ \, is given by the following:
\begin{enumerate}
\item[{\rm 1)}] \, 
$\big(\Lambda^{(M)[K(m),m_2](\heartsuit)}_{k_1,k_2} +\rho \, 
\big| \, \alpha_0\big)
\, = \left\{
\begin{array}{lccl}
-\dfrac{m(2k_1+k_2+1)}{M} \, + \, m_2+1 & & {\rm if} & 
\heartsuit \, = \, {\rm I \,\ or \,\ III}
\\[4mm]
\dfrac{m(2k_1+k_2-1)}{M} \, - \, m_2-1 & & {\rm if} & 
\heartsuit \, = \, {\rm II \,\ or \,\ IV}
\end{array}\right. $
\item[{\rm 2)}] \,\ $
(\Lambda^{(M)[K(m),m_2](\heartsuit)}_{k_1,k_2}+\rho 
\, | \, \alpha_0) \, \in \, \nnn
\quad \Longleftrightarrow \quad \left\{
\begin{array}{l}
\heartsuit \,\ = \,\ {\rm I \,\ or \,\ III} \\[1mm]
2k_1+k_2+1 \, = \, M \\[1mm]
m_2 \, = \, m 
\end{array}\right. $
\end{enumerate}
\end{lemma}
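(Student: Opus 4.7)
The plan is to exploit the decomposition of $\Lambda^{(M)[K(m),m_2](\heartsuit)}_{k_1,k_2}+\rho$ from \eqref{n4:eqn:2022-1211b}. Reducing modulo $\ccc\delta$ (which is harmless for computing $(\,\cdot\,|\,\alpha_0)$, since $(\delta|\alpha_0)=0$), I obtain
$$
\Lambda^{(M)[K(m),m_2](\heartsuit)}_{k_1,k_2}+\rho
\,\equiv\,
-\frac{m}{M}\Lambda_0
-\frac{m}{M}\beta
-\frac{m_2+1}{2}\,\overline{y}(\alpha_1+\alpha_3)
\pmod{\ccc\delta}.
$$
Then, since $\alpha_0=\delta-\theta$ and both $\beta$ and $\overline{y}(\alpha_1+\alpha_3)$ lie in $\overline{\hhh}$, one has $(\beta|\alpha_0)=-(\beta|\theta)$ and $(\overline{y}(\alpha_1+\alpha_3)|\alpha_0)=-(\overline{y}(\alpha_1+\alpha_3)|\theta)$, whereas $(\Lambda_0|\alpha_0)=1$. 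So pairing with $\alpha_0$ becomes a bookkeeping exercise involving just the two numerical inputs supplied by Note \ref{note:2022-1128c}.

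Substituting those values of $(\beta|\theta)$ and $(\overline{y}(\alpha_1+\alpha_3)|\theta)$ case-by-case gives, for $\heartsuit=\mathrm{I}$ or $\mathrm{III}$,
$$
(\Lambda+\rho|\alpha_0)
\,=\,
-\frac{m}{M}-\frac{m}{M}(2k_1+k_2)+(m_2+1)
\,=\,
-\frac{m(2k_1+k_2+1)}{M}+m_2+1,
$$
and for $\heartsuit=\mathrm{II}$ or $\mathrm{IV}$,
$$
(\Lambda+\rho|\alpha_0)
\,=\,
-\frac{m}{M}+\frac{m(2k_1+k_2)}{M}-(m_2+1)
\,=\,
\frac{m(2k_1+k_2-1)}{M}-m_2-1,
$$
establishing part 1).

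For part 2) I split by $\heartsuit$. When $\heartsuit\in\{\mathrm{II},\mathrm{IV}\}$, the ranges in \eqref{n4:eqn:2023-101a} yield $1\le 2k_1+k_2-1\le M-1$; combined with $\gcd(m,M)=1$ this forces $\frac{m(2k_1+k_2-1)}{M}\notin\zzz$, so the expression cannot be in $\nnn$. When $\heartsuit\in\{\mathrm{I},\mathrm{III}\}$, the ranges give $1\le 2k_1+k_2+1\le M$, and integrality together with $\gcd(m,M)=1$ forces $M\mid(2k_1+k_2+1)$, hence $2k_1+k_2+1=M$. The expression then collapses to $m_2+1-m$, and the requirement of lying in $\nnn=\zzz_{>0}$, together with the integrability bound $0\le m_2\le m$ from Lemma \ref{n4:lemma:2022-1206a}, pins down $m_2=m$.

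There is no real obstacle; the only thing to watch is sign bookkeeping across the four cases when reading off Note \ref{note:2022-1128c}, and the careful use of the coprimality $\gcd(m,M)=1$ together with the precise ranges \eqref{n4:eqn:2023-101a} to exclude the $\heartsuit\in\{\mathrm{II},\mathrm{IV}\}$ cases and to pick out the single admissible value $2k_1+k_2+1=M$ in the remaining cases.
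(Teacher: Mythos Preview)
Your proof is correct and follows essentially the same route as the paper: reduce $\Lambda+\rho$ modulo $\ccc\delta$ via \eqref{n4:eqn:2022-1211b}, pair with $\alpha_0=\delta-\theta$, and read off the values of $(\beta|\theta)$ and $(\overline{y}(\alpha_1+\alpha_3)|\theta)$ from Note~\ref{note:2022-1128c}. For part~2) the paper simply states that it ``follows from 1) immediately,'' whereas you spell out the coprimality/range argument explicitly; this is only an expositional difference, not a different method.
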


\begin{proof} 1) By \eqref{n4:eqn:2022-1211b}, one has
$$
\Lambda^{(M)[K(m),m_2](\heartsuit)}_{k_1,k_2}+\rho \,\ \equiv \,\ 
\, - \, \frac{m}{M} \, \Lambda_0
\, - \, \frac{m}{M} \, \beta
\, - \, \frac{m_2+1}{2}\overline{y}(\alpha_1+\alpha_2)
\quad {\rm mod} \,\ \ccc \, \delta
$$
so
{\allowdisplaybreaks
\begin{eqnarray*}
& & \hspace{-10mm}
\big(\Lambda^{(M)[K(m),m_2](\heartsuit)}_{k_1,k_2} +\rho \, 
\big| \, \delta-\theta\big)
\,\ = \,\ 
- \, \frac{m}{M} \, \underbrace{(\Lambda_0 |\delta)}_{1}
\, + \, \frac{m}{M} \, (\beta |\theta)
\, + \, \frac{m_2+1}{2} \, 
(\overline{y}(\alpha_1+\alpha_2) \, | \, \theta)
\\[1mm]
&=&
- \, \frac{m}{M} 
\, + \, \frac{m}{M} \, (\beta |\theta)
\, + \, \frac{m_2+1}{2} \, 
(\overline{y}(\alpha_1+\alpha_2) \, | \, \theta)
\end{eqnarray*}}
Then by Note \ref{note:2022-1128c}, this is computed in each case
as follows.

\medskip

\noindent
If \,\ $\heartsuit \, = $ I \, or \, III,
{\allowdisplaybreaks
\begin{eqnarray*}
& & \hspace{-15mm}
\big(\Lambda^{(M)[K(m),m_2](\heartsuit)}_{k_1,k_2}+\rho \, 
\big| \, \delta-\theta\big)
\,\ = \,\ 
- \, \frac{m}{M} 
\, + \, \frac{1}{M} \hspace{-13mm} 
\underbrace{(\beta |\theta)}_{\substack{|| \\[0mm] 
{\displaystyle \hspace{14mm}
- m(2k_1+k_2)
}}} \hspace{-8mm}
+ \,\ \frac{m_2+1}{2} \, 
\underbrace{(\overline{y}(\alpha_1+\alpha_3) \, | \, \theta)
}_{\substack{|| \\[0mm] {\displaystyle 2
}}}
\\[-5mm]
&=&
- \, \frac{m(2k_1+k_2+1)}{M}+ m_2+1
\end{eqnarray*}}

\vspace{-2mm}

\noindent
If \,\ $\heartsuit \, = $ II \, or \, IV,
{\allowdisplaybreaks
\begin{eqnarray*}
& & \hspace{-10mm}
\big(\Lambda^{(M)[K(m),m_2](\heartsuit)}_{k_1,k_2}+\rho \, 
\big| \, \delta-\theta\big)
\,\ = \,\ 
- \, \frac{m}{M} 
\, + \, \frac{1}{M} \hspace{-4mm} 
\underbrace{(\beta |\theta)}_{\substack{|| \\[0mm] 
{\displaystyle \hspace{5mm}
2k_1+k_2
}}} \hspace{-2mm}
+ \,\ \frac{m_2+1}{2} \, 
\underbrace{(\overline{y}(\alpha_1+\alpha_3) \, | \, \theta)
}_{\substack{|| \\[0mm] {\displaystyle - \, 2
}}}
\\[-5mm]
&=&
\frac{m(2k_1+k_2-1)}{M}-m_2-1
\end{eqnarray*}}
proving 1). \,\ 2) follows from 1) immediately.
\end{proof}

\medskip

Then by Lemmas \ref{lemma:2022-1220a} and \ref{lemma:2022-1221a},
we obtain the following:

\medskip

%(line=3290) %%
%(label=n4:prop:2022-1231a) %%
\begin{prop} 
\label{n4:prop:2022-1231a}
Let \, $\heartsuit \, = $ {\rm I}  or {\rm III} and $2k_1+k_2=M-1$. Then
\begin{enumerate}
\item[{\rm 1)}] \,\ $H(\Lambda^{(M)[K(m),m](\heartsuit)}_{k_1,k_2})
\,\ = \,\ 
H^{\rm tw}(\Lambda^{(M)[K(m),m](\heartsuit)}_{k_1,k_2})
\,\ = \,\ \{0\} $
\item[{\rm 2)}]
\begin{enumerate}
\item[{\rm (i)}] \,\
$\ddot{H}(\Lambda^{(M)[K(m),m-1](\heartsuit)}_{k_1,k_2})
= H(\Lambda^{(M)[K(m),m-1](\heartsuit)}_{k_1,k_2})$ 
\item[{\rm (ii)}] \,\ 
$\ddot{H}^{\rm tw}(\Lambda^{(M)[K(m),m-1](\heartsuit)}_{k_1,k_2})
= H^{\rm tw}(\Lambda^{(M)[K(m),m-1](\heartsuit)}_{k_1,k_2})$ 
\end{enumerate}
\end{enumerate}
\end{prop}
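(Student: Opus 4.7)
The plan is to derive both parts of the proposition as direct consequences of Lemmas \ref{lemma:2022-1220a} and \ref{lemma:2022-1221a}, with no new calculation needed. For part 1), I would set $\lambda := \Lambda^{(M)[K(m),m](\heartsuit)}_{k_1,k_2}$. The hypotheses $\heartsuit \in \{{\rm I}, {\rm III}\}$ and $2k_1+k_2 = M-1$, combined with the value $m_2 = m$, are exactly the three conditions appearing on the right-hand side of the equivalence in part 2) of Lemma \ref{lemma:2022-1221a}. Consequently $(\lambda+\rho \,|\, \alpha_0) \in \nnn$, i.e.\ $\lambda$ is integrable with respect to $\alpha_0 = \delta - \theta$. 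Applying both parts of Lemma \ref{lemma:2022-1220a} in the case $\ggg = A(1,1)$ with the twist $w_0 = r_{\alpha_2} t_{\frac12 \alpha_2}$ then yields $H(\lambda) = \{0\}$ and $H^{\rm tw}(\lambda) = \{0\}$, which is part 1).

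For part 2), I would appeal to the defining identity
\[
\ddot{H}(\Lambda^{(M)[K(m),m-1](\heartsuit)}_{k_1,k_2}) \;=\;
H(\Lambda^{(M)[K(m),m-1](\heartsuit)}_{k_1,k_2}) \,\oplus\,
H(\Lambda^{(M)[K(m),m](\heartsuit)}_{k_1,k_2})
\]
recorded in Section \ref{sec:non-irred}, together with its twisted analogue for $\ddot{H}^{\rm tw}$. Under the standing hypotheses $\heartsuit \in \{{\rm I}, {\rm III}\}$ and $2k_1+k_2 = M-1$, the second summand in each of these decompositions is precisely the module that part 1) has shown to vanish; hence 2)(i) and 2)(ii) follow at once.

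There is essentially no obstacle in this argument: after Lemmas \ref{lemma:2022-1220a} and \ref{lemma:2022-1221a} it is pure bookkeeping, matching the hypotheses of the proposition against the iff condition of Lemma \ref{lemma:2022-1221a}, 2), and then unfolding the definition of $\ddot{H}$ from Section \ref{sec:non-irred}. The only minor point worth flagging is that Lemma \ref{lemma:2022-1220a} is stated for a general highest-weight $\widehat{\ggg}$-module, but what is actually invoked is the character-divisibility consequence of integrability with respect to $\alpha_0$, and this applies verbatim to the principal admissible $\widehat{A}(1,1)$-module at hand.
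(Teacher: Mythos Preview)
Your proposal is correct and follows exactly the same route as the paper: the proposition is stated immediately after Lemmas \ref{lemma:2022-1220a} and \ref{lemma:2022-1221a} with the remark that it follows from them, and your write-up simply spells out the matching of hypotheses and the use of the direct-sum definition of $\ddot{H}$ from Section \ref{sec:non-irred}.
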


\medskip

Then by Proposition \ref{n4:prop:2022-1231a} and
Lemmas \ref{n4:lemma:2023-102a} and \ref{n4:lemma:2023-102b},
we obtain the following:

%(line=3317) %%
%(label=n4:prop:2023-103a) %%
\begin{prop} 
\label{n4:prop:2023-103a}
In the case $\heartsuit =$ {\rm I} or {\rm III} and $2k_1+k_2=M-1$,
the twisted and non-twisted numerators of irreducible N=4 modules 
$H(\Lambda^{(M)[K(m),m-1] (\heartsuit)}_{k_1,k_2})$ are given 
by the following formulas:
\begin{enumerate}
\item[{\rm 1)}]
\begin{enumerate}
\item[{\rm (i)}] $\big[\overset{N=4}{R}{}^{(+)} \cdot 
{\rm ch}^{(+)}_{H(\Lambda^{(M)[K(m), m-1]{\rm (I)}}_{k_1,k_2})}
\big](\tau,z)
\,\ = \,\ 
(-1)^m \, \Psi^{[M,m,m; \frac12]}_{k_1+\frac12, \, -(k_1+k_2+\frac12); \, \frac12}
(\tau, z, z,0)$
% 1) (iii)
\item[{\rm (ii)}] $\big[\overset{N=4}{R}{}^{(+)} \cdot 
{\rm ch}^{(+)}_{H(\Lambda^{(M)[K(m), m-1]{\rm (III)}}_{k_1,k_2})}\big](\tau,z)
\,\ = \,\ 
(-1)^m \, 
\Psi^{[M,m,m; \frac12]}_{k_1+\frac12, \, -(k_1+k_2+\frac12); \, \frac12}
(\tau, -z, -z,0)$
\end{enumerate}
% 2)
\item[{\rm 2)}]
\begin{enumerate}
\item[{\rm (i)}] $\big[\overset{N=4}{R}{}^{(-)} \cdot 
{\rm ch}^{(-)}_{H(\Lambda^{(M)[K(m), m-1]{\rm (I)}}_{k_1,k_2})}\big](\tau,z)
\,\ = \,\ 
\Psi^{[M,1,0; 0]}_{k_1+\frac12, \, -(k_1+k_2+\frac12); \, \frac12}
(\tau, z, z,0)$
% 2) (iii)
\item[{\rm (ii)}] $\big[\overset{N=4}{R}{}^{(-)} \cdot 
{\rm ch}^{(-)}_{H(\Lambda^{(M)[K(m), m-1]{\rm (III)}}_{k_1,k_2})}\big](\tau,z)
\,\ = \,\ 
\Psi^{[M,m,m; 0]}_{k_1+\frac12, \, -(k_1+k_2+\frac12); \, \frac12}
(\tau, -z, -z,0)$
\end{enumerate}
\item[$1)^{\rm tw}$]
\begin{enumerate}
\item[{\rm (i)}] 
$\big[\overset{N=4}{R}{}^{(+){\rm tw}} \cdot 
{\rm ch}^{(+){\rm tw}}_{H(\Lambda^{(M)[K(m), m-1]{\rm (I)}}_{k_1,k_2})}
\big](\tau,z)
\,\ = \,\ 
(-1)^m \, 
\Psi^{[M,m,m; \frac12]}_{k_1, \, -(k_1+k_2+1); \, 0}(\tau, -z, -z,0)$
% 1) (iii)
\item[{\rm (ii)}] $\big[\overset{N=4}{R}{}^{(+){\rm tw}} \cdot 
{\rm ch}^{(+){\rm tw}}_{H(\Lambda^{(M)[K(m), m-1]{\rm (III)}}_{k_1,k_2})}\big](\tau,z)
\,\ = \,\ 
(-1)^m \, \Psi^{[M,m,m; \frac12]}_{k_1+1, \, -(k_1+k_2); \, 0}(\tau, z, z,0)$
\end{enumerate}
% 2)
\item[$2)^{\rm tw}$]
\begin{enumerate}
\item[{\rm (i)}] $\big[\overset{N=4}{R}{}^{(-){\rm tw}} \cdot 
{\rm ch}^{(-){\rm tw}}_{H(\Lambda^{(M)[K(m), m-1]{\rm (I)}}_{k_1,k_2})}\big](\tau,z)
\,\ = \,\ 
\Psi^{[M,m,m; 0]}_{k_1, \, -(k_1+k_2+1); \, 0}(\tau, -z, -z,0)$
% 2) (iii)
\item[{\rm (ii)}] $\big[\overset{N=4}{R}{}^{(-){\rm tw}} \cdot 
{\rm ch}^{(-){\rm tw}}_{H(\Lambda^{(M)[K(m), m-1]{\rm (III)}}_{k_1,k_2})}\big](\tau,z)
\,\ = \,\ 
\Psi^{[M,m,m; 0]}_{k_1+1, \, -(k_1+k_2); \, 0}(\tau, z, z,0)$
\end{enumerate}
\end{enumerate}
\end{prop}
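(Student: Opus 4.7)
The plan is to combine the vanishing result of Proposition \ref{n4:prop:2022-1231a} with the explicit formulas for the doubled N=4 modules provided by Lemmas \ref{n4:lemma:2023-102a} and \ref{n4:lemma:2023-102b}. The hypotheses $\heartsuit=$ I or III and $2k_1+k_2=M-1$ correspond exactly to the integrability condition with respect to $\alpha_0$ at $m_2=m$, so that part 1) of Proposition \ref{n4:prop:2022-1231a} kills the $m_2=m$ summand in
\[
\ddot{H}(\Lambda^{(M)[K(m),m-1](\heartsuit)}_{k_1,k_2}) \,=\, H(\Lambda^{(M)[K(m),m-1](\heartsuit)}_{k_1,k_2}) \,\oplus\, H(\Lambda^{(M)[K(m),m](\heartsuit)}_{k_1,k_2}),
\]
and the analogous Ramond-twisted statement. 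Thus $\ddot{H}(\Lambda^{(M)[K(m),m-1](\heartsuit)}_{k_1,k_2})=H(\Lambda^{(M)[K(m),m-1](\heartsuit)}_{k_1,k_2})$ (and similarly with tw), so the (super)characters of the irreducible module equal those of the doubled module in all four sign-denominator combinations.

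Then I specialize Lemma \ref{n4:lemma:2023-102a} (for ${}^{(\pm)}$) and Lemma \ref{n4:lemma:2023-102b} (for ${}^{(\pm){\rm tw}}$) to $m_2=m-1$, retaining only the entries $\heartsuit=$ I and III. This substitution turns every $\Psi^{[M,m,m_2+1;\varepsilon]}$ into $\Psi^{[M,m,m;\varepsilon]}$ and every sign $(-1)^{m_2+1}$ into $(-1)^m$. The resulting identities match claims 1)(i),(ii), 2)(i),(ii), $1)^{\rm tw}$(i),(ii), $2)^{\rm tw}$(i),(ii) of the proposition on the nose.

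Since each line is a direct specialization of a previously established formula, there is essentially no obstacle beyond bookkeeping. The one point to double-check is that the indices $(k_1+\tfrac12,\,-(k_1+k_2+\tfrac12))$ in the untwisted case and $(k_1,\,-(k_1+k_2+1))$ for $\heartsuit=$ I, respectively $(k_1+1,\,-(k_1+k_2))$ for $\heartsuit=$ III, in the twisted case agree with Lemmas \ref{n4:lemma:2023-102a} and \ref{n4:lemma:2023-102b}, and that the sign choice of the arguments $(\pm z,\pm z,0)$ for cases I versus III is transcribed without being swapped. With this verification, the proposition follows immediately from Proposition \ref{n4:prop:2022-1231a} and the two lemmas, without any new computation.
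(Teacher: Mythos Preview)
Your proposal is correct and follows exactly the same approach as the paper: the paper simply states that Proposition~\ref{n4:prop:2023-103a} follows from Proposition~\ref{n4:prop:2022-1231a} together with Lemmas~\ref{n4:lemma:2023-102a} and~\ref{n4:lemma:2023-102b}, which is precisely the combination you describe. Your bookkeeping of indices, signs, and the specialization $m_2=m-1$ is accurate.
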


\section{Nice cases of quantum Hamiltonian reduction}
\label{sec:nice}
%(label=sec:nice) %%
%(line=3395) %%

In this section we consider the case $m=1$ and $m_2=0$ and $2k_1+k_2=M-1$.
The range of the parameters $(k_1,k_2)$, when $2k_1+k_2=M-1$, is 
%(label=n4:eqn:2022-1213a) %%
\begin{equation}
\begin{array}{lclcl}
0 \, \leq \, k_1 \, \leq \frac12(M-1) & {\rm and} & k_2 \, \geq \, 0 & \quad &
{\rm for} \quad \Pi^{(M) {\rm (I)}}_{k_1,k_2}
\\[3mm]
0 \, \leq \, k_1 \, \leq \frac12(M-2) & {\rm and} & k_2 \, \geq \, 1 & \quad &
{\rm for} \quad \Pi^{(M) {\rm (III)}}_{k_1,k_2}
\end{array}
\label{n4:eqn:2022-1213a}
\end{equation}

In this case, the formulas in Propositions \ref{n4:prop:2023-103a} 
give the following:

\medskip

%(line=3448) %%
%(label=lemma:2022-1227a) %%
\begin{lemma} 
\label{lemma:2022-1227a}
In the case $2k_1+k_2=M-1$, the following formulas hold:
\begin{enumerate}
\item[{\rm 1)}]
\begin{enumerate}
\item[{\rm (i)}] $\big[\overset{N=4}{R}{}^{(+)} \cdot 
{\rm ch}^{(+)}_{H(\Lambda^{(M)[K(1), 0]{\rm (I)}}_{k_1,k_2})}
\big](\tau,z)
\,\ = \,\ 
\Psi^{[M,1,0; \frac12]}_{k_1+\frac12, \, k_1+\frac12; \, \frac12}
(\tau, z, z,0)$
% 1) (iii)
\item[{\rm (ii)}] $\big[\overset{N=4}{R}{}^{(+)} \cdot 
{\rm ch}^{(+)}_{H(\Lambda^{(M)[K(1), 0]{\rm (III)}}_{k_1,k_2})}\big](\tau,z)
\,\ = \,\ 
- \, \Psi^{[M,1,0; \frac12]}_{-(k_1+\frac12), \, -(k_1+\frac12); \, \frac12}
(\tau, z, z,0)$
\end{enumerate}
% 2)
\item[{\rm 2)}]
\begin{enumerate}
\item[{\rm (i)}] $\big[\overset{N=4}{R}{}^{(-)} \cdot 
{\rm ch}^{(-)}_{H(\Lambda^{(M)[K(1), 0]{\rm (I)}}_{k_1,k_2})}\big](\tau,z)
\,\ = \,\ 
- \, \Psi^{[M,1,0; 0]}_{k_1+\frac12, \, k_1+\frac12; \, \frac12}
(\tau, z, z,0)$
% 2) (iii)
\item[{\rm (ii)}] $\big[\overset{N=4}{R}{}^{(-)} \cdot 
{\rm ch}^{(-)}_{H(\Lambda^{(M)[K(1), 0]{\rm (III)}}_{k_1,k_2})}\big](\tau,z)
\,\ = \,\ 
\Psi^{[M,1,0; 0]}_{-(k_1+\frac12), \, -(k_1+\frac12); \, \frac12}
(\tau, z, z,0)$
\end{enumerate}
\end{enumerate}
% twisted
\begin{enumerate}
\item[$1)^{\rm tw}$]
\begin{enumerate}
\item[{\rm (i)}] $\big[\overset{N=4}{R}{}^{(+){\rm tw}} \cdot 
{\rm ch}^{(+){\rm tw}}_{H(\Lambda^{(M)[K(1), 0]{\rm (I)}}_{k_1,k_2})}
\big](\tau,z)
\,\ = \,\ 
- \, \Psi^{[M,1,0; \frac12]}_{-k_1, \, -k_1; \, 0}(\tau, z, z,0)$
% 1) (iii)
\item[{\rm (ii)}] $\big[\overset{N=4}{R}{}^{(+){\rm tw}} \cdot 
{\rm ch}^{(+){\rm tw}}_{H(\Lambda^{(M)[K(1), 0]{\rm (III)}}_{k_1,k_2})}\big](\tau,z)
\,\ = \,\ 
\Psi^{[M,1,0; \frac12]}_{k_1+1, \, k_1+1; \, 0}(\tau, z, z,0)$
\end{enumerate}
% 2)
\item[$2)^{\rm tw}$]
\begin{enumerate}
\item[{\rm (i)}] $\big[\overset{N=4}{R}{}^{(-){\rm tw}} \cdot 
{\rm ch}^{(-){\rm tw}}_{H(\Lambda^{(M)[K(1), 0]{\rm (I)}}_{k_1,k_2})}\big](\tau,z)
\,\ = \,\ 
- \, \Psi^{[M,1,0; 0]}_{-k_1, \, -k_1; \, 0}(\tau, z, z,0)$
% 2) (iii)
\item[{\rm (ii)}] $\big[\overset{N=4}{R}{}^{(-){\rm tw}} \cdot 
{\rm ch}^{(-){\rm tw}}_{H(\Lambda^{(M)[K(1), 0]{\rm (III)}}_{k_1,k_2})}\big](\tau,z)
\,\ = \,\ 
\Psi^{[M,1,0; 0]}_{k_1+1, \, k_1+1; \, 0}(\tau, z, z,0)$
\end{enumerate}
\end{enumerate}
\end{lemma}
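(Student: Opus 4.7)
The plan is to derive Lemma~\ref{lemma:2022-1227a} as a direct specialization of Proposition~\ref{n4:prop:2023-103a} to the parameter values $m=1$, $m_2=0$, followed by rewriting each $\Psi$-argument into the canonical form $\Psi^{[M,1,0;\varepsilon]}_{j,j;\varepsilon'}(\tau,z,z,0)$ with a single index $j$. The first step is immediate: substituting $m=1$, $m_2=0$ into the eight formulas of Proposition~\ref{n4:prop:2023-103a} replaces $(-1)^m$ by $-1$ and produces $\Psi^{[M,1,1;\varepsilon]}_{\cdots}$. Then by the explicit formula \eqref{n4:eqn:2022-1208a}, the function $\Psi^{[M,1;s;\varepsilon]}_{j,k;\varepsilon'}$ is manifestly independent of $s \in \zzz$, so every $\Psi^{[M,1,1;\varepsilon]}$ appearing may be replaced by $\Psi^{[M,1,0;\varepsilon]}$ as claimed.

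The remaining work is a uniform manipulation of indices and arguments. For the entries (ii) of parts 1), 2) and (i), (iv) of parts $1)^{\rm tw}$, $2)^{\rm tw}$ (which carry $\Psi(\tau,-z,-z,0)$), apply Lemma~\ref{lemma:2022-1021a}~2) with $z_1=z_2=z$ to rewrite
\[
\Psi^{[M,1,0;\varepsilon]}_{j,k;\varepsilon'}(\tau,-z,-z,0)
\;=\;-\Psi^{[M,1,0;\varepsilon]}_{-k,-j;\varepsilon'}(\tau,z,z,0).
\]
Next use the hypothesis $2k_1+k_2=M-1$, which rewrites each auxiliary index as a shift of $\pm(k_1+\tfrac12)$ or $\pm k_1,\pm(k_1+1)$ by a multiple of $M$: explicitly, $-(k_1+k_2+\tfrac12)=(k_1+\tfrac12)-M$, $-(k_1+k_2-\tfrac12)=(k_1+\tfrac32)-M$, $-(k_1+k_2+1)=-k_1+(1-M)$ and $-(k_1+k_2)=(k_1+1)-M$. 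Apply Lemma~\ref{lemma:2022-1021a}~1) to absorb each such $M$-shift into the phase $e^{2\pi i(a-b)\varepsilon}$; this phase is $(-1)^{a-b}$ when $\varepsilon=\tfrac12$ (the non-twisted cases 1), the twisted-parity-$\tfrac12$ cases of $1)^{\rm tw}$) and is trivial when $\varepsilon=0$ (parts 2) and $2)^{\rm tw}$). After these reductions the indices collapse to the required form $(\pm(k_1+\tfrac12),\pm(k_1+\tfrac12))$ in 1) and 2), and to $(\mp k_1,\mp k_1)$ or $(\pm(k_1+1),\pm(k_1+1))$ in the twisted cases.

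The main obstacle is bookkeeping the signs, which come from four independent sources: the factor $(-1)^m=-1$ in the part 1) and $1)^{\rm tw}$ statements of Proposition~\ref{n4:prop:2023-103a}; the overall $-1$ produced each time Lemma~\ref{lemma:2022-1021a}~2) is invoked to convert $(-z,-z)$ into $(z,z)$; the parity factor $(-1)^{a-b}$ from Lemma~\ref{lemma:2022-1021a}~1) when $\varepsilon=\tfrac12$; and the implicit sign in the index swap $(j,k)\mapsto(-k,-j)$. The calculation reduces to a case-by-case verification over the eight entries of Lemma~\ref{lemma:2022-1227a}, each of which is entirely routine once one tabulates the parities of $(a-b)$ in the $M$-shifts above and the value of $\varepsilon$.
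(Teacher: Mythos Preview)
Your proposal is correct and follows essentially the same route as the paper: specialize Proposition~\ref{n4:prop:2023-103a} to $m=1$, replace $\Psi^{[M,1,1;\varepsilon]}$ by $\Psi^{[M,1,0;\varepsilon]}$ via the $s$-independence encoded in \eqref{n4:eqn:2022-1208a}, then use the identities of Lemma~\ref{lemma:2022-1021a} together with the index rewriting forced by $2k_1+k_2=M-1$. (One small slip: there is no entry ``(iv)'' in parts $1)^{\rm tw}$, $2)^{\rm tw}$ of the Lemma---only (i) carries $(-z,-z)$ there---but this does not affect your argument.)
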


\begin{proof} We note that 
$$\left\{
\begin{array}{ccc}
k_1+k_2+\frac12-M &=& -(k_1+\frac12) \\[1.5mm]
M-(k_1+k_2+\frac12) &=& k_1+\frac12
\end{array}\right. \hspace{5mm} \left\{
\begin{array}{ccc}
k_1+k_2+1-M &=& -k_1 \\[1mm]
M-(k_1+k_2) &=& k_1+1
\end{array}\right. ,
$$
and $\Psi^{[M,1,s; \varepsilon]}_{j,k; \varepsilon'}
=
\Psi^{[M,1,0;\varepsilon]}_{j,k; \varepsilon'}$ \, for 
${}^{\forall} s \in \zzz$ \, by \eqref{n4:eqn:2022-1208a}.
Then the formulas in this proposition follow from 
Proposition \ref{n4:prop:2023-103a} and Lemma \ref{lemma:2022-1021a}.
\end{proof}

\medskip

In order to write down the characters of these \lq \lq nice" N=4 modules 
explicitly, we use the following:

\medskip

%(line=3510) %%
%(label=n4:lemma:2022-1219a) %%
%(label=note:2022-1217a) %%
\begin{lemma} 
\label{n4:lemma:2022-1219a}
\label{note:2022-1217a}
For $M \, \in \, \nnn$, the following formulas hold:
\begin{enumerate}
\item[{\rm 1)}]
\begin{enumerate}
\item[{\rm (i)}] \quad $\dfrac{
\Psi^{[M,1;0; \frac12]}_{j,j;\frac12}(\tau, z, z, 0)
}{\overset{N=4}{R}{}^{(+)}(\tau,z)}
\,\ = \,\ 
- \, q^{\frac{1}{M}j^2} e^{\frac{4\pi ij}{M}z}$
$$
\times \,\ \frac{
\vartheta_{00}(M\tau, \, z+j\tau)
\vartheta_{01}(M\tau, \, z+j\tau)
\vartheta_{11}(M\tau, \, z+j\tau)
}{\vartheta_{10}(M\tau, \, z+j\tau)}
\cdot \frac{\vartheta_{00}(\tau,z)}{
\vartheta_{01}(\tau,z)\vartheta_{10}(\tau,z)\vartheta_{11}(\tau,z)}
$$
\item[{\rm (ii)}] \quad $\dfrac{
\Psi^{[M,1;0; 0]}_{j,j;\frac12}(\tau, z, z, 0)
}{\overset{N=4}{R}{}^{(-)}(\tau,z)}
\,\ = \,\ 
- \, q^{\frac{1}{M}j^2} e^{\frac{4\pi ij}{M}z}$
$$
\times \,\ 
\dfrac{
\vartheta_{00}(M\tau, \, z+j\tau)
\vartheta_{01}(M\tau, \, z+j\tau)
\vartheta_{10}(M\tau, \, z+j\tau)
}{\vartheta_{11}(M\tau, \, z+j\tau)}
\cdot \frac{\vartheta_{01}(\tau,z)}{
\vartheta_{00}(\tau,z)\vartheta_{10}(\tau,z)\vartheta_{11}(\tau,z)}
$$
\end{enumerate}
\item[{\rm 2)}]
\begin{enumerate}
\item[{\rm (i)}] \quad $\dfrac{
\Psi^{[M,1;0; \frac12]}_{j,j;0}(\tau, z, z, 0)
}{\overset{N=4}{R}{}^{(+){\rm tw}}(\tau,z)}
\,\ = \,\ 
q^{\frac{1}{M}j^2} e^{\frac{4\pi ij}{M}z}$
$$
\times \,\ \frac{
\vartheta_{00}(M\tau, \, z+j\tau)
\vartheta_{01}(M\tau, \, z+j\tau)
\vartheta_{11}(M\tau, \, z+j\tau)
}{\vartheta_{10}(M\tau, \, z+j\tau)}
\cdot \frac{\vartheta_{10}(\tau,z)}{
\vartheta_{00}(\tau,z)\vartheta_{01}(\tau,z)\vartheta_{11}(\tau,z)}
$$
\item[{\rm (ii)}] \quad $\dfrac{
\Psi^{[M,1;0; 0]}_{j,j;0}(\tau, z, z, 0)
}{\overset{N=4}{R}{}^{(-){\rm tw}}(\tau,z)}
\,\ = \,\ - \, 
q^{\frac{1}{M}j^2} e^{\frac{4\pi ij}{M}z}$
$$
\times \,\ 
\dfrac{
\vartheta_{00}(M\tau, \, z+j\tau)
\vartheta_{01}(M\tau, \, z+j\tau)
\vartheta_{10}(M\tau, \, z+j\tau)
}{\vartheta_{11}(M\tau, \, z+j\tau)}
\cdot \frac{\vartheta_{11}(\tau,z)}{
\vartheta_{00}(\tau,z)\vartheta_{01}(\tau,z)\vartheta_{10}(\tau,z)}
$$
\end{enumerate}
\end{enumerate}
\end{lemma}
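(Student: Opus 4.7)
The proof is a direct calculation that combines the explicit $m=1$ form \eqref{n4:eqn:2022-1208a} of $\Psi^{[M,1;\cdot;\varepsilon]}$ with two classical theta-function identities and the product formulas \eqref{n4:eqn:2022-1219c1}--\eqref{n4:eqn:2022-1219c4} for the N=4 denominators. Specializing \eqref{n4:eqn:2022-1208a} at $z_1=z_2=z$, $k=j$, $s=0$, $t=0$ gives
$$
\Psi^{[M,1;0;\varepsilon]}_{j,j;\varepsilon'}(\tau,z,z,0) \,=\, -\,i\,q^{\frac{j^2}{M}}\,e^{\frac{4\pi ij}{M}z}\,\frac{\eta(M\tau)^3\,\vartheta_{11}(M\tau,\,2z+2j\tau)}{\vartheta_{11}(M\tau,\,z+j\tau+\varepsilon)\,\vartheta_{11}(M\tau,\,z+j\tau-\varepsilon)},
$$
which is independent of $\varepsilon'$; the full $\varepsilon'$-dependence in the four cases therefore enters only through the denominator $\overset{N=4}{R}{}^{(\varepsilon)}_{\varepsilon'}$.

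The first identity I would invoke is the duplication formula $\vartheta_{11}(M\tau,\,2w)\,\eta(M\tau)^3 \,=\, \vartheta_{00}\vartheta_{01}\vartheta_{10}\vartheta_{11}(M\tau,w)$ at $w=z+j\tau$; this cancels the $\eta(M\tau)^3$ and produces the four-theta product at $(M\tau,z+j\tau)$ that appears in every right-hand side of the lemma. The second is the half-period shift $\vartheta_{11}(M\tau,w\pm\tfrac12) = \mp\,\vartheta_{10}(M\tau,w)$, which in the case $\varepsilon=\tfrac12$ collapses the product $\vartheta_{11}(M\tau,z+j\tau+\tfrac12)\,\vartheta_{11}(M\tau,z+j\tau-\tfrac12)$ to $-\vartheta_{10}(M\tau,z+j\tau)^2$; when $\varepsilon=0$ no such identity is needed since the denominator is already $\vartheta_{11}(M\tau,z+j\tau)^2$. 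Together these two steps move one $\vartheta_{ab}(M\tau,z+j\tau)$ factor from the numerator to the denominator ($\vartheta_{10}$ when $\varepsilon=\tfrac12$, $\vartheta_{11}$ when $\varepsilon=0$), in exact agreement with the $(M\tau,z+j\tau)$-factor displayed in each formula.

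Dividing by the second form $\overset{N=4}{R}{}^{(\varepsilon)}_{\varepsilon'}(\tau,z) = (-1)^{2\varepsilon}\,i\,\vartheta_{00}\vartheta_{01}\vartheta_{10}\vartheta_{11}(\tau,z)/\vartheta_{1-2\varepsilon',1-2\varepsilon}(\tau,z)^2$ from \eqref{n4:eqn:2022-1210b} then produces, for each of the four $(\varepsilon,\varepsilon')\in\{0,\tfrac12\}^2$, precisely the $(\tau,z)$-factor $\vartheta_{1-2\varepsilon',1-2\varepsilon}(\tau,z)\big/\!\!\prod_{(ab)\neq(1-2\varepsilon',1-2\varepsilon)}\!\!\vartheta_{ab}(\tau,z)$ displayed in the lemma, and each case reduces to a mechanical match of the remaining four-theta product. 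The sole delicate point---and really the only obstacle---is the bookkeeping of the overall sign: the $-i$ from \eqref{n4:eqn:2022-1208a}, the $(-1)^{2\varepsilon}\,i$ from the definition of $\overset{N=4}{R}{}^{(\varepsilon)}_{\varepsilon'}$, and the $-1$ from the half-period shift (present only when $\varepsilon=\tfrac12$) must combine to yield the $\pm1$ in front of each right hand side, and this amounts to four simple sign verifications carried out case by case.
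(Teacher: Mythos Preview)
Your proposal is correct and follows essentially the same route as the paper's proof: specialize \eqref{n4:eqn:2022-1208a} at $z_1=z_2=z$, $k=j$, $t=0$, apply the duplication identity $\eta(M\tau)^3\,\vartheta_{11}(M\tau,2w)=\vartheta_{00}\vartheta_{01}\vartheta_{10}\vartheta_{11}(M\tau,w)$ together with the half-period shift $\vartheta_{11}(M\tau,w\pm\tfrac12)=\mp\,\vartheta_{10}(M\tau,w)$, and then divide by the product forms \eqref{n4:eqn:2022-1219c1}--\eqref{n4:eqn:2022-1219c4} of the four N=4 denominators. The paper's proof is the same two-step simplification followed by the same division, with the same sign bookkeeping left as a case check.
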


\begin{proof} Letting $j=k$ and $z_1=z_2=z$ in \eqref{n4:eqn:2022-1208a},
one has 
\begin{subequations}
{\allowdisplaybreaks
\begin{eqnarray}
& & \hspace{-15mm}
\Psi^{[M,1,0;\frac12]}_{j,j;\varepsilon'}
\,\ = \,\
-i \, q^{\frac{1}{M}j^2} e^{\frac{4\pi i}{M}jz} \, 
\frac{\eta(M\tau)^3 \vartheta_{11}(M\tau, 2z+2j\tau)
}{\vartheta_{11}(M\tau, z+j\tau+\frac12) \, 
\vartheta_{11}(M\tau, z+j\tau-\frac12)}
\nonumber
\\[3mm]
&=&
i \, q^{\frac{1}{M}j^2} e^{\frac{4\pi i}{M}jz} \, 
\frac{\vartheta_{00}(M\tau, z+j\tau) \, \vartheta_{01}(M\tau, z+j\tau) \, 
\vartheta_{11}(M\tau, z+j\tau)}{\vartheta_{10}(M\tau, z+j\tau)}
\label{n4:eqn:2022-1219a2}
\\[3mm]
& & \hspace{-15mm}
\Psi^{[M,1,0;0]}_{j,j;\varepsilon'}
\,\ = \,\
-i \, q^{\frac{1}{M}j^2} e^{\frac{4\pi i}{M}jz} \, 
\frac{\eta(M\tau)^3 \vartheta_{11}(M\tau, 2z+2j\tau)
}{\vartheta_{11}(M\tau, z+j\tau)^2}
\nonumber
\\[3mm]
&=&
-i \, q^{\frac{1}{M}j^2} e^{\frac{4\pi i}{M}jz} \, 
\frac{\vartheta_{00}(M\tau, z+j\tau) \, \vartheta_{01}(M\tau, z+j\tau) \, 
\vartheta_{10}(M\tau, z+j\tau)}{\vartheta_{11}(M\tau, z+j\tau)}
\label{n4:eqn:2022-1219a1}
\end{eqnarray}}
\end{subequations}
since
$$\left\{
\begin{array}{lcl}
\eta(\tau)^3 \vartheta_{11}(\tau, 2z) 
&=&
\vartheta_{00}(\tau, z) \, \vartheta_{01}(\tau, z) \, 
\vartheta_{10}(\tau, z) \, \vartheta_{11}(\tau, z)
\\[2mm]
\vartheta_{11}(\tau, z \pm \frac12)
&=&
\mp \, \vartheta_{10}(\tau, z)
\end{array}\right.
$$
Then the formulas in Lemma \ref{n4:lemma:2022-1219a} follow 
immediately from \eqref{n4:eqn:2022-1219c1} $\sim$ 
\eqref{n4:eqn:2022-1219c4} and the above formulas 
\eqref{n4:eqn:2022-1219a2} and \eqref{n4:eqn:2022-1219a1}.
\end{proof}

\medskip

%(line=3678) %%
%(label=thm:2022-1217a) %%
\begin{thm} \,\ 
\label{thm:2022-1217a}
For $M \in \nnn$ and non-negative integers $k_1$ and $k_2$ 
satisfying $2k_1+k_2=M-1$ and \eqref{n4:eqn:2022-1213a}, 
the characters of the N=4 module 
$H(\Lambda^{(M)[K(1),0](\heartsuit)}_{k_1,k_2})$ 
$(\heartsuit \, =$ {\rm I} or {\rm III)} 
are given by the following formulas:
{\allowdisplaybreaks 
\begin{eqnarray*}
& & \hspace{-7mm}
{\rm ch}^{(+)}_{H(\Lambda^{(M)[K(1),0]{\rm (I)}}_{k_1,k_2})}(\tau,z)
\,\ = \,\ 
\Big[- \, q^{\frac{1}{M}j^2} e^{\frac{4\pi i}{M}jz}
\\[2mm]
& &
\times \,\ \frac{
\vartheta_{00}(M\tau, \, z+j\tau)
\vartheta_{01}(M\tau, \, z+j\tau)
\vartheta_{11}(M\tau, \, z+j\tau)
}{\vartheta_{10}(M\tau, \, z+j\tau)} \cdot
\frac{\vartheta_{00}(\tau,z)}{
\vartheta_{01}(\tau,z)\vartheta_{10}(\tau,z)\vartheta_{11}(\tau,z)}
\bigg]_{j=k_1+\frac12}
% ch^{(-)}
\\[3mm]
& & \hspace{-7mm}
{\rm ch}^{(-)}_{H(\Lambda^{(M)[K(1),0]{\rm (I)}}_{k_1,k_2})}(\tau,z)
\,\ = \,\ \Big[
q^{\frac{1}{M}j^2} e^{\frac{4\pi i}{M}jz}
\\[2mm]
& &
\times \,\ \dfrac{
\vartheta_{00}(M\tau, \, z+j\tau)
\vartheta_{01}(M\tau, \, z+j\tau)
\vartheta_{10}(M\tau, \, z+j\tau)
}{\vartheta_{11}(M\tau, \, z+j\tau)} \cdot
\frac{\vartheta_{01}(\tau,z)}{
\vartheta_{00}(\tau,z)\vartheta_{10}(\tau,z)\vartheta_{11}(\tau,z)}
\bigg]_{j=k_1+\frac12}
% (III)
\\[3mm]
& & \hspace{-7mm}
{\rm ch}^{(+)}_{H(\Lambda^{(M)[K(1),0]{\rm (III)}}_{k_1,k_2})}(\tau,z)
\,\ = \,\ \Big[
q^{\frac{1}{M}j^2} e^{\frac{4\pi i}{M}jz} 
\\[2mm]
& & \hspace{-5mm}
\times \,\ \frac{
\vartheta_{00}(M\tau, \, z+j\tau)
\vartheta_{01}(M\tau, \, z+j\tau)
\vartheta_{11}(M\tau, \, z+j\tau)
}{\vartheta_{10}(M\tau, \, z+j\tau)} \cdot
\frac{\vartheta_{00}(\tau,z)}{
\vartheta_{01}(\tau,z)\vartheta_{10}(\tau,z)\vartheta_{11}(\tau,z)}
\bigg]_{j=-(k_1+\frac12)}
% ch^{(-)}
\\[3mm]
& & \hspace{-7mm}
{\rm ch}^{(-)}_{H(\Lambda^{(M)[K(1),0]{\rm (III)}}_{k_1,k_2})}(\tau,z)
\,\ = \,\ \Big[
- \, q^{\frac{1}{M}j^2} e^{\frac{4\pi i}{M}jz}
\\[2mm]
& & \hspace{-5mm}
\times \,\ \dfrac{
\vartheta_{00}(M\tau, \, z+j\tau)
\vartheta_{01}(M\tau, \, z+j\tau)
\vartheta_{10}(M\tau, \, z+j\tau)
}{\vartheta_{11}(M\tau, \, z+j\tau)} \cdot
\frac{\vartheta_{01}(\tau,z)}{
\vartheta_{00}(\tau,z)\vartheta_{10}(\tau,z)\vartheta_{11}(\tau,z)}
\bigg]_{j=-(k_1+\frac12)}
\end{eqnarray*}}
\end{thm}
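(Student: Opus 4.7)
The plan is to derive Theorem~\ref{thm:2022-1217a} as an immediate consequence of the two preparatory lemmas already established, together with careful sign bookkeeping. Lemma~\ref{lemma:2022-1227a} identifies each numerator $\overset{N=4}{R}{}^{(\pm)}\cdot{\rm ch}^{(\pm)}_{H(\Lambda^{(M)[K(1),0](\heartsuit)}_{k_1,k_2})}$ with a specific $\Psi^{[M,1,0;\varepsilon]}_{j,j;\frac12}(\tau,z,z,0)$ evaluated at $j=\pm(k_1+\tfrac12)$, and Lemma~\ref{n4:lemma:2022-1219a} expresses the ratio $\Psi^{[M,1,0;\varepsilon]}_{j,j;\frac12}(\tau,z,z,0)/\overset{N=4}{R}{}^{(\pm)}(\tau,z)$ as an explicit product/quotient of Mumford's theta functions. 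So the proof is essentially to divide the first by $\overset{N=4}{R}{}^{(\pm)}(\tau,z)$ and apply the second.

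First I would handle the non-twisted character in case $\heartsuit={\rm I}$: dividing Lemma~\ref{lemma:2022-1227a}~1)(i) by $\overset{N=4}{R}{}^{(+)}(\tau,z)$ and invoking Lemma~\ref{n4:lemma:2022-1219a}~1)(i) at $j=k_1+\tfrac12$ yields directly the leading $-q^{j^2/M}e^{4\pi ijz/M}$ factor and theta-quotient of the first formula in the statement. For case $\heartsuit={\rm III}$, Lemma~\ref{lemma:2022-1227a}~1)(ii) carries an extra minus sign and uses the value $j=-(k_1+\tfrac12)$; combined with the $(-)$ in Lemma~\ref{n4:lemma:2022-1219a}~1)(i) the two signs cancel, producing the positive $+q^{j^2/M}e^{4\pi ijz/M}$ prefactor with $j=-(k_1+\tfrac12)$ as claimed.

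For the super-characters I would argue symmetrically using parts~2) of both lemmas. For case~I, the $(-)$ of Lemma~\ref{lemma:2022-1227a}~2)(i) combines with the $(-)$ of Lemma~\ref{n4:lemma:2022-1219a}~1)(ii) to give the $+q^{j^2/M}e^{4\pi ijz/M}$ formula at $j=k_1+\tfrac12$; for case~III, only one minus remains, producing the $-q^{j^2/M}e^{4\pi ijz/M}$ formula at $j=-(k_1+\tfrac12)$. In each of the four cases the theta-quotient appearing on the right is exactly the one supplied by Lemma~\ref{n4:lemma:2022-1219a}, so no further manipulation is needed.

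The hard part, which has already been carried out in the buildup, lies not in this final step but in assembling Lemma~\ref{lemma:2022-1227a}: it rests on Proposition~\ref{n4:prop:2022-1231a} (the vanishing of one irreducible component when $2k_1+k_2=M-1$, $m=1$, $m_2=0$, which makes $\ddot{H}=H$), on the key telescoping identity of Note~\ref{n4:note:2022-1223a} that eliminates the $A(1,1)$-correction and leaves only $\Phi^{[1,s]}$, and on the miraculous collapse \eqref{n4:eqn:2022-1208a} which allows $\Phi^{[1,s]}$ to be written as a theta-function ratio. Once those ingredients are in hand, the only genuine care required in the present theorem is the sign tracking above; I do not anticipate any further obstacle.
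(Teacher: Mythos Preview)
Your proposal is correct and follows essentially the same route as the paper: the paper's proof also simply divides the identities of Lemma~\ref{lemma:2022-1227a} by $\overset{N=4}{R}{}^{(\pm)}(\tau,z)$ and reads off the resulting theta-quotients from Lemma~\ref{n4:lemma:2022-1219a}, carrying out the sign bookkeeping explicitly only for $\heartsuit={\rm I}$ and declaring the remaining cases ``quite similar.'' Your additional remarks about where the real work lies (Proposition~\ref{n4:prop:2022-1231a}, Note~\ref{n4:note:2022-1223a}, and \eqref{n4:eqn:2022-1208a}) are accurate but belong to the earlier lemmas rather than to this proof itself.
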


\begin{proof} These formulas are obtained easily from 
\eqref{n4:eqn:2022-1219c1} and \eqref{n4:eqn:2022-1219c2}
and Lemmas \ref{lemma:2022-1227a} and \ref{n4:lemma:2022-1219a}. 
In the case $\heartsuit =$ I, the proof goes as follows:
{\allowdisplaybreaks
\begin{eqnarray*}
& & \hspace{-15mm}
{\rm ch}^{(+)}_{H(\Lambda^{(M)[K(1),0]{\rm (I)}}_{k_1,k_2})}(\tau,z)
\,\ = \,\ \frac{
\Psi^{[M,1,0;\frac12]}_{k_1+\frac12, k_1+\frac12; \frac12}(\tau,z)
}{\overset{N=4}{R}{}^{(+)}(\tau,z)}
\,\ = \,\ \frac{
\Psi^{[M,1,0;\frac12]}_{j,j; \frac12}(\tau,z)
}{\overset{N=4}{R}{}^{(+)}(\tau,z)}
\\[2mm]
&=&
- \, q^{\frac{1}{M}j^2} e^{\frac{4\pi i}{M}jz} \, 
\frac{
\vartheta_{00}(M\tau, \, z+j\tau)
\vartheta_{01}(M\tau, \, z+j\tau)
\vartheta_{11}(M\tau, \, z+j\tau)
}{\vartheta_{10}(M\tau, \, z+j\tau)} 
\\[0mm]
& &
\times \,\
\frac{\vartheta_{00}(\tau,z)}{
\vartheta_{01}(\tau,z)\vartheta_{10}(\tau,z)\vartheta_{11}(\tau,z)}
\\[2mm]
& & \hspace{-15mm}
{\rm ch}^{(-)}_{H(\Lambda^{(M)[K(1),0]{\rm (I)}}_{k_1,k_2})}(\tau,z)
\,\ = \,\ 
- \, \frac{
\Psi^{[M,1,0;0]}_{k_1+\frac12, k_1+\frac12; \frac12}(\tau,z)
}{\overset{N=4}{R}{}^{(-)}(\tau,z)}
\,\ = \,\ 
- \, \frac{
\Psi^{[M,1,0;0]}_{j,j; \frac12}(\tau,z)
}{\overset{N=4}{R}{}^{(-)}(\tau,z)}
\\[2mm]
&=&
q^{\frac{1}{M}j^2} e^{\frac{4\pi i}{M}jz} \, 
\frac{
\vartheta_{00}(M\tau, \, z+j\tau)
\vartheta_{01}(M\tau, \, z+j\tau)
\vartheta_{10}(M\tau, \, z+j\tau)
}{\vartheta_{11}(M\tau, \, z+j\tau)} 
\\[0mm]
& &
\times \,\
\frac{\vartheta_{01}(\tau,z)}{
\vartheta_{00}(\tau,z)\vartheta_{10}(\tau,z)\vartheta_{11}(\tau,z)}
\end{eqnarray*}}
proving the formula in the case $\heartsuit =$ I.
The proof of the rests is quite similar.
\end{proof}

\medskip

%(line=3814) %%
%(label=thm:2022-1217b) %%
\begin{thm} \,\
\label{thm:2022-1217b}
For $M \in \nnn$ and non-negative integers $k_1$ and $k_2$ 
satisfying $2k_1+k_2=M-1$ and \eqref{n4:eqn:2022-1213a}, 
the twisted characters of the N=4 module 
$H^{\rm tw}(\Lambda^{(M)[K(1),0](\heartsuit)}_{k_1,k_2})$ 
$(\heartsuit \, =$ {\rm I} or {\rm III)} 
are given by the following formulas:
{\allowdisplaybreaks 
\begin{eqnarray*}
& & \hspace{-5mm}
{\rm ch}^{(+){\rm tw}}_{H(\Lambda^{(M)[K(1),0]{\rm (I)}}_{k_1,k_2})}(\tau,z)
\,\ = \,\ 
\Big[ \, q^{\frac{1}{M}j^2} e^{\frac{4\pi i}{M}jz}
\\[2mm]
& &
\times \,\ \frac{
\vartheta_{00}(M\tau, \, z+j\tau)
\vartheta_{01}(M\tau, \, z+j\tau)
\vartheta_{11}(M\tau, \, z+j\tau)
}{\vartheta_{10}(M\tau, \, z+j\tau)}
\cdot \frac{\vartheta_{10}(\tau,z)}{
\vartheta_{00}(\tau,z)\vartheta_{01}(\tau,z)\vartheta_{11}(\tau,z)}
\bigg]_{j=-k_1}
\\[3mm]
& & \hspace{-5mm}
{\rm ch}^{(-){\rm tw}}_{H(\Lambda^{(M)[K(1),0]{\rm (I)}}_{k_1,k_2})}(\tau,z)
\,\ = \,\ \Big[ \, 
q^{\frac{1}{M}j^2} e^{\frac{4\pi i}{M}jz}
\\[2mm]
& &
\times \,\ \frac{
\vartheta_{00}(M\tau, \, z+j\tau)
\vartheta_{01}(M\tau, \, z+j\tau)
\vartheta_{10}(M\tau, \, z+j\tau)
}{\vartheta_{11}(M\tau, \, z+j\tau)}
\cdot \frac{\vartheta_{11}(\tau,z)}{
\vartheta_{00}(\tau,z)\vartheta_{01}(\tau,z)\vartheta_{10}(\tau,z)}
\bigg]_{j=-k_1}
% (III)
\\[3mm]
& & \hspace{-5mm}
{\rm ch}^{(+){\rm tw}}_{H(\Lambda^{(M)[K(1),0]{\rm (III)}}_{k_1,k_2})}(\tau,z)
\,\ = \,\ \Big[ \, - \, 
q^{\frac{1}{M}j^2} e^{\frac{4\pi i}{M}jz}
\\[2mm]
& &
\times \,\ \frac{
\vartheta_{00}(M\tau, \, z+j\tau)
\vartheta_{01}(M\tau, \, z+j\tau)
\vartheta_{11}(M\tau, \, z+j\tau)
}{\vartheta_{10}(M\tau, \, z+j\tau)}
\cdot \frac{\vartheta_{10}(\tau,z)}{
\vartheta_{00}(\tau,z)\vartheta_{01}(\tau,z)\vartheta_{11}(\tau,z)}
\bigg]_{j=k_1+1}
\\[3mm]
& & \hspace{-5mm}
{\rm ch}^{(-){\rm tw}}_{H(\Lambda^{(M)[K(1),0]{\rm (III)}}_{k_1,k_2})}(\tau,z)
\,\ = \,\ \Big[ \, - \, 
q^{\frac{1}{M}j^2} e^{\frac{4\pi i}{M}jz}
\\[2mm]
& &
\times \,\ \frac{
\vartheta_{00}(M\tau, \, z+j\tau)
\vartheta_{01}(M\tau, \, z+j\tau)
\vartheta_{10}(M\tau, \, z+j\tau)
}{\vartheta_{11}(M\tau, \, z+j\tau)}
\cdot \frac{\vartheta_{11}(\tau,z)}{
\vartheta_{00}(\tau,z)\vartheta_{01}(\tau,z)\vartheta_{10}(\tau,z)}
\bigg]_{j=k_1+1}
\end{eqnarray*}}
\end{thm}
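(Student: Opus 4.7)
The plan is to imitate the proof of Theorem~\ref{thm:2022-1217a}, now with \emph{twisted} numerators. The two ingredients are already in place: parts $1)^{\rm tw}$ and $2)^{\rm tw}$ of Lemma~\ref{lemma:2022-1227a} express the twisted numerators $\overset{N=4}{R}{}^{(\pm){\rm tw}}\cdot {\rm ch}^{(\pm){\rm tw}}_{H(\Lambda^{(M)[K(1),0](\heartsuit)}_{k_1,k_2})}$ as (signed) diagonal $\Psi$-values $\Psi^{[M,1,0;\varepsilon]}_{j,j;0}(\tau,z,z,0)$, and part~2) of Lemma~\ref{n4:lemma:2022-1219a} evaluates the quotient $\Psi^{[M,1;0;\varepsilon]}_{j,j;0}/\overset{N=4}{R}{}^{(\pm){\rm tw}}$ explicitly as a ratio of Mumford theta functions.

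Concretely, for the $(+){\rm tw}$ character in case $\heartsuit=\mathrm{I}$, Lemma~\ref{lemma:2022-1227a}\,$1)^{\rm tw}$(i) gives $-\Psi^{[M,1,0;\frac12]}_{-k_1,-k_1;0}(\tau,z,z,0)$; dividing by $\overset{N=4}{R}{}^{(+){\rm tw}}(\tau,z)$ and applying Lemma~\ref{n4:lemma:2022-1219a}\,$2)$(i) at $j=-k_1$ yields the stated formula. The $(-){\rm tw}$ character in case $\heartsuit=\mathrm{I}$ is handled identically, using parts $2)^{\rm tw}$(i) and $2)$(ii) of the same two lemmas at $j=-k_1$. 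The two cases of $\heartsuit=\mathrm{III}$ are analogous, using the third clauses of Lemma~\ref{lemma:2022-1227a} together with Lemma~\ref{n4:lemma:2022-1219a}\,$2)$ at $j=k_1+1$ (which is the image of $-(k_1+k_2)\equiv k_1+1 \pmod M$).

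The only genuine step that requires care is the sign bookkeeping, but this has already been absorbed into Lemma~\ref{lemma:2022-1227a}: the factor $(-1)^m=-1$ (with $m=1$) from the $A(1|1)$-numerator in Lemma~\ref{n4:lemma:2023-102b} combines with the periodicity/parity relations of Lemma~\ref{lemma:2022-1021a} (used to rewrite $\Psi^{[M,1,0;\varepsilon]}_{k_1,-(k_1+k_2\pm 1);0}$ in the diagonal form $\Psi^{[M,1,0;\varepsilon]}_{j,j;0}$) to produce precisely the signs displayed. Consistency of the final signs can be cross-checked against Corollary~\ref{cor:2022-1222a}\,$2)$, where $j=-k_1\le 0$ (so ${\rm sgn}(j)=-1$) for $\heartsuit=\mathrm{I}$ and $j=k_1+1\ge 1$ (so ${\rm sgn}(j)=+1$) for $\heartsuit=\mathrm{III}$, matching the sign structure of the main Theorem~\ref{n4:thm:2022-1203a}\,$2)$.
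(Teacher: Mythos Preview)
Your proposal is correct and follows essentially the same route as the paper's proof: combine the twisted parts $1)^{\rm tw}$ and $2)^{\rm tw}$ of Lemma~\ref{lemma:2022-1227a} with part~2) of Lemma~\ref{n4:lemma:2022-1219a} (and the explicit denominator formulas \eqref{n4:eqn:2022-1219c3}--\eqref{n4:eqn:2022-1219c4}) to divide out $\overset{N=4}{R}{}^{(\pm){\rm tw}}$, exactly as in the untwisted case of Theorem~\ref{thm:2022-1217a}. One minor wording slip: for $\heartsuit=\mathrm{III}$ you mean the (ii) clauses of Lemma~\ref{lemma:2022-1227a}, not the ``third'' clauses.
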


\begin{proof} These formulas are obtained easily from 
\eqref{n4:eqn:2022-1219c3} and \eqref{n4:eqn:2022-1219c4}
and Lemmas \ref{lemma:2022-1227a} and \ref{n4:lemma:2022-1219a}. 
In the case $\heartsuit =$ I, the proof goes as follows:
{\allowdisplaybreaks 
\begin{eqnarray*}
& & \hspace{-10mm}
{\rm ch}^{(+){\rm tw}}_{H(\Lambda^{(M)[K(1),0]{\rm (I)}}_{k_1,k_2})}(\tau,z)
\, = \,\ - \, 
\frac{\Psi^{[M,1,0,\frac12]}_{-k_1,-k_1: 0}(\tau, \, z, z, \, 0)}{
\overset{N=4}{R}{}^{(+){\rm tw}}(\tau,z)}
\,\ = \,\ - \, 
\frac{\Psi^{[M,1,0,\frac12]}_{j,j: 0}(\tau, \, z, z, \, 0)}{
\overset{N=4}{R}{}^{(+){\rm tw}}(\tau,z)}
\\[3mm]
&=& 
q^{\frac{1}{M}j^2} e^{\frac{4\pi i}{M}jz} \, 
\frac{
\vartheta_{00}(M\tau, \, z+j\tau)
\vartheta_{01}(M\tau, \, z+j\tau)
\vartheta_{11}(M\tau, \, z+j\tau)
}{\vartheta_{10}(M\tau, \, z+j\tau)}
\\[0mm]
& &
\times \,\ 
\frac{\vartheta_{10}(\tau,z)}{
\vartheta_{00}(\tau,z)\vartheta_{01}(\tau,z)\vartheta_{11}(\tau,z)}
% (I) (-)
\\[2mm]
& & \hspace{-10mm}
{\rm ch}^{(-){\rm tw}}_{H(\Lambda^{(M)[K(1),0]{\rm (I)}}_{k_1,k_2})}(\tau,z)
\, = \,\ - \, 
\frac{\Psi^{[M,1,0,0]}_{-k_1,-k_1: 0}(\tau, \, z, z, \, 0)}{
\overset{N=4}{R}{}^{(-){\rm tw}}(\tau,z)}
\,\ = \,\ - \, 
\frac{\Psi^{[M,1,0,0]}_{j,j: 0}(\tau, \, z, z, \, 0)}{
\overset{N=4}{R}{}^{(-){\rm tw}}(\tau,z)}
\\[3mm]
&=& 
q^{\frac{1}{M}j^2} e^{\frac{4\pi i}{M}jz} \, 
\frac{
\vartheta_{00}(M\tau, \, z+j\tau)
\vartheta_{01}(M\tau, \, z+j\tau)
\vartheta_{10}(M\tau, \, z+j\tau)
}{\vartheta_{11}(M\tau, \, z+j\tau)}
\\[0mm]
& &
\times \,\ 
\frac{\vartheta_{11}(\tau,z)}{
\vartheta_{00}(\tau,z)\vartheta_{01}(\tau,z)\vartheta_{10}(\tau,z)}
\end{eqnarray*}}
proving the formulas in the case $\heartsuit =$ I.
The proof of the rests is quite similar.
\end{proof}

\medskip

Now Corollary \ref{cor:2022-1222a} and Theorems \ref{thm:2022-1217a}
and \ref{thm:2022-1217b} complete the proof of 
Theorem \ref{n4:thm:2022-1203a} stated in section \ref{sec:introduction}.

\section{Examples $\sim$ the cases $M=1$ and $M=2$}
\label{sec:examples}

In this section, we compute the formulas in Theorem 
\ref{n4:thm:2022-1203a} in the cases $M=1$ and $M=2$.

\medskip

%(line=3931) %%
%(label=ex:2022-1216a) %%
\begin{ex} 
\label{ex:2022-1216a}
In the case $M=1$, one has $I^{[1]}=\{\frac12\}$ and 
$I^{[1], R}=\{0\}$ by \eqref{n4:eqn:2022-1203a}
and $c^{[1]}=0$ and $
h^{[1,\frac12]}= s^{[1,\frac12]}=h^{[1,0]R}=s^{[1,0]R}=0$
by \eqref{n4:eqn:2023-106a} and \eqref{n4:eqn:2023-106b}.

\medskip

\noindent
{\rm 1)} \,\ For $j=\frac12 \in I^{[1]}$, Theorem \ref{n4:thm:2022-1203a} gives
{\allowdisplaybreaks 
\begin{eqnarray*}
& & \hspace{-10mm}
{\rm ch}^{(+)}_{V^{[M=1,j=\frac12]}}(\tau,z)
\,\ = \,\ 
- \, q^{(\frac12)^2} e^{2\pi iz}
\\[2mm]
& &
\times \,\ \frac{
\vartheta_{00}(\tau, \, z+\frac12\tau)
\vartheta_{01}(\tau, \, z+\frac12\tau)
\vartheta_{11}(\tau, \, z+\frac12\tau)
}{\vartheta_{10}(\tau, \, z+\frac12\tau)}
\cdot \frac{\vartheta_{00}(\tau,z)}{
\vartheta_{01}(\tau,z)\vartheta_{10}(\tau,z)\vartheta_{11}(\tau,z)}
\\[0mm]
&=& 1
% ch^{(-)}
\\[3mm]
& & \hspace{-10mm}
{\rm ch}^{(-)}_{V^{[M=1,j=\frac12]}}(\tau,z)
\,\ = \,\ 
q^{(\frac12)^2} e^{2\pi iz}
\\[2mm]
& &
\times \,\ 
\dfrac{
\vartheta_{00}(\tau, \, z+\frac12\tau)
\vartheta_{01}(\tau, \, z+\frac12\tau)
\vartheta_{10}(\tau, \, z+\frac12\tau)
}{\vartheta_{11}(\tau, \, z+\frac12\tau)}
\cdot \frac{\vartheta_{01}(\tau,z)}{
\vartheta_{00}(\tau,z)\vartheta_{10}(\tau,z)\vartheta_{11}(\tau,z)}
\\[0mm]
&=& 1
\end{eqnarray*}}
since
\begin{equation}
\left\{
\begin{array}{lcr}
\vartheta_{00}(\tau, \, z+\frac{\tau}{2}) &=&
q^{-\frac18} \, e^{-\pi iz} \, \vartheta_{10}(\tau, z) \\[1mm]
\vartheta_{01}(\tau, \, z+\frac{\tau}{2}) &=& - \, i \, 
q^{-\frac18} \, e^{-\pi iz} \, \vartheta_{11}(\tau, z) \\[1mm]
\vartheta_{10}(\tau, \, z+\frac{\tau}{2}) &=&
q^{-\frac18} \, e^{-\pi iz} \, \vartheta_{00}(\tau, z) \\[1mm]
\vartheta_{11}(\tau, \, z+\frac{\tau}{2}) &=& - \, i \, 
q^{-\frac18} \, e^{-\pi iz} \, \vartheta_{01}(\tau, z)
\end{array} \right.
\label{n4:eqn:2023-107a}
\end{equation}

\medskip

\noindent
{\rm 1)} \,\ For $j=0 \in I^{[1], R}$, Theorem \ref{n4:thm:2022-1203a} gives
{\allowdisplaybreaks 
\begin{eqnarray*}
& & \hspace{-10mm}
{\rm ch}^{(+)}_{V^{[M=1,j=0]R}}(\tau,z)
\,\ = \,\ 
\frac{
\vartheta_{00}(\tau, z)
\vartheta_{01}(\tau, z)
\vartheta_{11}(\tau, z)
}{\vartheta_{10}(\tau, z)}
\cdot \frac{\vartheta_{10}(\tau,z)}{
\vartheta_{00}(\tau,z)\vartheta_{01}(\tau,z)\vartheta_{11}(\tau,z)}
\,\ = \,\ 1
\\[3mm]
& & \hspace{-10mm}
{\rm ch}^{(-)}_{V^{[M=1,j=0]R}}(\tau,z)
\,\ = \,\ 
\frac{
\vartheta_{00}(\tau, z)
\vartheta_{01}(\tau, z)
\vartheta_{10}(\tau, z)
}{\vartheta_{11}(\tau, z)}
\cdot \frac{\vartheta_{11}(\tau,z)}{
\vartheta_{00}(\tau,z)\vartheta_{01}(\tau,z)\vartheta_{10}(\tau,z)}
\,\ = \,\ 1
\end{eqnarray*}}
\end{ex}

\medskip

These are just in consistency with that $V^{[1, \frac12]}$ 
(resp. $V^{[1,0]R}$) is the trivial representation of the N=4 
superconformal algebra of Neveu-Schwarz type (resp. Ramond type). 

\medskip

Next, we consider the case $M=2$, 
where $I^{[2]}=\big\{\pm \frac12\big\}$ 
and $I^{[2], R}=\{0, 1\}$ by \eqref{n4:eqn:2022-1203a}.

\medskip

%(line=4041)
%(label=n4:prop:2023-106a)
\begin{prop} \,\ 
\label{n4:prop:2023-106a}
Let $j \in I^{[2]} =\big\{\pm \frac12\big\}$. Then 
\begin{enumerate}
\item[{\rm 1)}]$c^{[M=2]}=-3$ \,\ and \,\ 
$\left\{
\begin{array}{ccc}
h^{[M=2,j=\frac12]} &=&-\frac14 \\[1mm]
s^{[M=2,j=\frac12]} &=&-\frac12
\end{array}\right.$ \,\ and \,\ $\left\{
\begin{array}{ccc}
h^{[M=2,j=-\frac12]} &=&-\frac14 \\[1mm]
s^{[M=2,j=-\frac12]} &=&-\frac32
\end{array}\right.$
\item[{\rm 2)}] the characters are as follows: 
\begin{enumerate}
\item[{\rm (i)}] $
{\rm ch}_{V^{[M=2,j=\pm \frac12]}}^{(+)}(\tau,z) = i \, 
\bigg[\dfrac{\eta(2\tau)}{\eta(\tau)}\bigg]^3 \dfrac{
\vartheta_{00}(2\tau, z \pm \frac{\tau}{2}) \, 
\vartheta_{00}(\tau,z)
}{
\vartheta_{10}(2\tau, z \pm \frac{\tau}{2}) \, 
\vartheta_{11}(2\tau,2z)}$
\item[{\rm (ii)}] $
{\rm ch}_{V^{[M=2,j=\pm \frac12]}}^{(-)}(\tau,z) = \pm 
\bigg[\dfrac{\eta(2\tau)}{\eta(\tau)}\bigg]^3 \dfrac{
\vartheta_{01}(2\tau, z\pm\frac{\tau}{2}) \, 
\vartheta_{01}(\tau,z)
}{
\vartheta_{11}(2\tau, z\pm\frac{\tau}{2}) \, 
\vartheta_{11}(2\tau,2z)}$
\end{enumerate}

\item[{\rm 3)}] the leading terms (= the terms of the least degree 
of $q$) are as follows:
\begin{enumerate}
\item[{\rm (i)}] the leading term of 
${\rm ch}_{V^{[2,\frac12]}}^{(\pm)}(\tau,z)
= \, 
q^{\frac18-\frac14} \dfrac{e^{-\pi iz}}{1-e^{-4\pi iz}}
\, = \,
q^{-\frac{1}{24}c^{[2]}+h^{[2, \frac12]}} \cdot \, 
\dfrac{e^{2\pi i s^{[2, \frac12]}z}}{1-e^{-4\pi iz}}$
\item[{\rm (ii)}] the leading term of 
${\rm ch}_{V^{[2,-\frac12]}}^{(\pm)}(\tau,z)
=  
q^{\frac18-\frac14} \dfrac{e^{-3\pi iz}}{1-e^{-4\pi iz}}
= \,
q^{-\frac{1}{24}c^{[2]}+h^{[2, -\frac12]}} \cdot 
\dfrac{e^{2\pi i s^{[2, -\frac12]}z}}{1-e^{-4\pi iz}}$
\end{enumerate}
\end{enumerate}
\end{prop}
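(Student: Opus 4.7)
The plan is to obtain each item of Proposition~\ref{n4:prop:2023-106a} as a direct specialization of the main Theorem~\ref{n4:thm:2022-1203a} to $M=2$ and $j\in I^{[2]}=\{\pm\tfrac12\}$, followed by standard Mumford theta function bookkeeping. Part~(1) is immediate: substituting $M=2$ into $c^{[M]}=6(1-M)/M$ gives $-3$, and substituting $(M,j)=(2,\pm\tfrac12)$ into $h^{[M,j]}=j^2/M+1/(4M)-\tfrac12$ and $s^{[M,j]}=2j/M-1$ yields $h=-\tfrac14$ in both cases together with $s=-\tfrac12$ for $j=\tfrac12$ and $s=-\tfrac32$ for $j=-\tfrac12$.

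For part~(2), the plan is to substitute $M=2,\ j=\pm\tfrac12$ into the character formulas of Theorem~\ref{n4:thm:2022-1203a} and simplify. After cancelling the common factor $\vartheta_{00}(2\tau,z\pm\tfrac{\tau}{2})\vartheta_{00}(\tau,z)/\vartheta_{10}(2\tau,z\pm\tfrac{\tau}{2})$, the identity to be checked reduces to
\[
-q^{1/8}e^{\pm\pi iz}\,\frac{\vartheta_{01}(2\tau,z\pm\tfrac{\tau}{2})\vartheta_{11}(2\tau,z\pm\tfrac{\tau}{2})}{\vartheta_{01}(\tau,z)\vartheta_{10}(\tau,z)\vartheta_{11}(\tau,z)}
\;=\;\frac{i\,(\eta(2\tau)/\eta(\tau))^{3}}{\vartheta_{11}(2\tau,2z)}.
\]
I would prove this by combining three classical ingredients: first, the Jacobi quadruple-product identity $\eta(\tau)^3\vartheta_{11}(\tau,2z)=\prod_{(a,b)}\vartheta_{ab}(\tau,z)$, already invoked in Lemma~\ref{n4:lemma:2022-1219a}, applied at both levels $\tau$ and $2\tau$; second, the level-$2\tau$ analogue of the half-period shift~\eqref{n4:eqn:2023-107a}, namely $\vartheta_{11}(2\tau,w+\tau)=-iq^{-1/4}e^{-\pi iw}\vartheta_{01}(2\tau,w)$ with $w=2z$, which converts $\vartheta_{11}(2\tau,2z\pm\tau)$ into a multiple of $\vartheta_{01}(2\tau,2z)$; and third, the duplication identity
\[
\vartheta_{00}\bigl(2\tau,z\pm\tfrac{\tau}{2}\bigr)\vartheta_{10}\bigl(2\tau,z\pm\tfrac{\tau}{2}\bigr)
\;=\;q^{-1/8}e^{\mp\pi iz}\,\frac{\eta(2\tau)^{2}}{\eta(\tau)}\,\vartheta_{00}(\tau,z),
\]
which I would verify directly from the Jacobi triple products by merging the even and odd index factors of $(1\pm q^{n-\alpha}e^{\pm 2\pi iz})$. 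The super-character formula is handled in parallel, with the roles of $\vartheta_{10}$ and $\vartheta_{11}$ in the $N=4$ denominator interchanged.

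For part~(3), the plan is a routine $q$-expansion of the compact formula obtained in~(2). Using $(\eta(2\tau)/\eta(\tau))^{3}=q^{1/8}(1+O(q))$, $\vartheta_{00}(\tau,z)=1+O(q^{1/2})$, $\vartheta_{00}(2\tau,z\pm\tfrac{\tau}{2})=1+O(q^{1/2})$, $\vartheta_{10}(2\tau,z\pm\tfrac{\tau}{2})=e^{\mp\pi iz}+O(q^{1/2})$, and $\vartheta_{11}(2\tau,2z)=iq^{1/4}e^{2\pi iz}(1-e^{-4\pi iz})+O(q^{9/4})$, the ratio collapses to $q^{-1/8}e^{-\pi iz}/(1-e^{-4\pi iz})$ for $j=\tfrac12$ and $q^{-1/8}e^{-3\pi iz}/(1-e^{-4\pi iz})$ for $j=-\tfrac12$, matching $q^{-c^{[2]}/24+h^{[2,j]}}e^{2\pi is^{[2,j]}z}/(1-e^{-4\pi iz})$ as claimed.

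The main obstacle is the duplication identity stated in part~(2); the cleanest verification is via the Jacobi product expansions, where the pairs of factors $(1+q^{2n-1/2}X)(1+q^{2n-3/2}X)$ with $X=e^{\pm 2\pi iz}$ arising from $\vartheta_{00}(2\tau,z+\tfrac{\tau}{2})\vartheta_{10}(2\tau,z+\tfrac{\tau}{2})$ telescope over $n\geq 1$ into $\prod_{m\geq 1}(1+q^{m-1/2}X)$, which is precisely the charged part of $\vartheta_{00}(\tau,z)$, leaving the $\eta$-quotient and the prefactor $q^{-1/8}e^{-\pi iz}$ behind. Every other step is a mechanical substitution.
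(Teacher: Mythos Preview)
Your approach is correct and essentially the same as the paper's: both start from Theorem~\ref{n4:thm:2022-1203a} with $M=2$, $j=\pm\tfrac12$, and simplify via standard theta duplication identities, then extract leading terms from the product expansions. The paper quotes the pair of identities \eqref{eqn:2023-103c}--\eqref{eqn:2023-103d} (your ``duplication identity'' is precisely the first line of each) together with \eqref{eqn:2023-103b}, and applies them directly; you instead keep only the $\vartheta_{00}\vartheta_{10}$ duplication and recover the companion $\vartheta_{01}\vartheta_{11}$ relation via the quadruple product at level $2\tau$ plus a half-period shift --- an equivalent rearrangement of the same classical facts.

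One small bookkeeping slip: in your displayed reduced identity the overall sign should be $-\mathrm{sgn}(j)$ rather than $-$ for both choices of $j$, since Theorem~\ref{n4:thm:2022-1203a} carries the factor $-\mathrm{sgn}(j)$; for $j=-\tfrac12$ the left-hand side should read $+q^{1/8}e^{-\pi iz}\cdots$. This does not affect the method, and the paper handles it simply by writing out the $j=\tfrac12$ and $j=-\tfrac12$ cases separately (equations \eqref{eqn:2023-102a1}--\eqref{eqn:2023-102b2}) before invoking \eqref{eqn:2023-103c} and \eqref{eqn:2023-103d}.
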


\begin{proof} 2) By Theorem \ref{n4:thm:2022-1203a}, we have 
\begin{subequations}
{\allowdisplaybreaks 
\begin{eqnarray}
& & \hspace{-7mm}
{\rm ch}_{V^{[M=2,j=\frac12]}}^{(+)}(\tau,z)
\,\ = \,\ 
- \, q^{\frac{1}{2}(\frac12)^2} e^{\frac{4\pi i}{2} \cdot \frac12 z}
\nonumber
\\[2mm]
& &
\times \,\ \frac{
\vartheta_{00}(2\tau, \, z+\frac{\tau}{2}) 
\vartheta_{01}(2\tau, \, z+\frac{\tau}{2})
\vartheta_{11}(2\tau, \, z+\frac{\tau}{2})
}{\vartheta_{10}(2\tau, \, z+\frac{\tau}{2})} \cdot
\frac{\vartheta_{00}(\tau,z)}{
\vartheta_{01}(\tau,z)\vartheta_{10}(\tau,z)\vartheta_{11}(\tau,z)}
\label{eqn:2023-102a1}
% (-)
\\[3mm]
& & \hspace{-7mm}
{\rm ch}_{V^{[M=2,j=\frac12]}}^{(-)}(\tau,z)
\,\ = \,\ 
q^{\frac{1}{2}(\frac12)^2} e^{\frac{4\pi i}{2} \cdot \frac12 z}
\nonumber
\\[2mm]
& &
\times \,\ \dfrac{
\vartheta_{00}(2\tau, \, z+\frac{\tau}{2})
\vartheta_{01}(2\tau, \, z+\frac{\tau}{2})
\vartheta_{10}(2\tau, \, z+\frac{\tau}{2})
}{\vartheta_{11}(2\tau, \, z+\frac{\tau}{2})} \cdot
\frac{\vartheta_{01}(\tau,z)}{
\vartheta_{00}(\tau,z)\vartheta_{10}(\tau,z)\vartheta_{11}(\tau,z)}
\label{eqn:2023-102a2}
\end{eqnarray}}
\end{subequations}
and
\begin{subequations}
{\allowdisplaybreaks 
\begin{eqnarray}
& & \hspace{-7mm}
{\rm ch}_{V^{[M=2,j=-\frac12]}}^{(+)}(\tau,z)
\,\ = \,\ 
q^{\frac{1}{2}(\frac12)^2} e^{\frac{4\pi i}{2} \cdot (-\frac12) z}
\nonumber
\\[2mm]
& &
\times \,\ \frac{
\vartheta_{00}(2\tau, \, z-\frac{\tau}{2})
\vartheta_{01}(2\tau, \, z-\frac{\tau}{2})
\vartheta_{11}(2\tau, \, z-\frac{\tau}{2})
}{\vartheta_{10}(2\tau, \, z-\frac{\tau}{2})} \cdot
\frac{\vartheta_{00}(\tau,z)}{
\vartheta_{01}(\tau,z)\vartheta_{10}(\tau,z)\vartheta_{11}(\tau,z)}
\label{eqn:2023-102b1}
% (-)
\\[3mm]
& & \hspace{-7mm}
{\rm ch}_{V^{[M=2,j=-\frac12]}}^{(-)}(\tau,z)
\,\ = \,\ 
- \, q^{\frac{1}{2}(\frac12)^2} e^{\frac{4\pi i}{2} \cdot (-\frac12) z}
\nonumber
\\[2mm]
& &
\times \,\ \dfrac{
\vartheta_{00}(2\tau, \, z-\frac{\tau}{2})
\vartheta_{01}(2\tau, \, z-\frac{\tau}{2})
\vartheta_{10}(2\tau, \, z-\frac{\tau}{2})
}{\vartheta_{11}(2\tau, \, z-\frac{\tau}{2})} \cdot
\frac{\vartheta_{01}(\tau,z)}{
\vartheta_{00}(\tau,z)\vartheta_{10}(\tau,z)\vartheta_{11}(\tau,z)}
\label{eqn:2023-102b2}
\end{eqnarray}}
\end{subequations}
Rewriting the above equations by using 
\begin{subequations}
\begin{equation} \left\{
\begin{array}{ccr}
\vartheta_{00}\Big(2\tau, z+\dfrac{\tau}{2}\Big) \, 
\vartheta_{10}\Big(2\tau, z+\dfrac{\tau}{2}\Big)
&=&
q^{-\frac18} \, e^{-\pi iz} \, 
\dfrac{\eta(2\tau)^2}{\eta(\tau)} \, \vartheta_{00}(\tau, z)
\\[4mm]
\vartheta_{01}\Big(2\tau, z+\dfrac{\tau}{2}\Big) \, 
\vartheta_{11}\Big(2\tau, z+\dfrac{\tau}{2}\Big)
&=&
- \, i \, q^{-\frac18} \, e^{-\pi iz} \, 
\dfrac{\eta(2\tau)^2}{\eta(\tau)} \, \vartheta_{01}(\tau, z)
\end{array} \right.
\label{eqn:2023-103c}
\end{equation}
and
\begin{equation} \left\{
\begin{array}{ccr}
\vartheta_{00}\Big(2\tau, z-\dfrac{\tau}{2}\Big) \, 
\vartheta_{10}\Big(2\tau, z-\dfrac{\tau}{2}\Big)
&=&
q^{-\frac18} \, e^{\pi iz} \, 
\dfrac{\eta(2\tau)^2}{\eta(\tau)} \, \vartheta_{00}(\tau, z)
\\[4mm]
\vartheta_{01}\Big(2\tau, z-\dfrac{\tau}{2}\Big) \, 
\vartheta_{11}\Big(2\tau, z-\dfrac{\tau}{2}\Big)
&=&
i \, q^{-\frac18} \, e^{\pi iz} \, 
\dfrac{\eta(2\tau)^2}{\eta(\tau)} \, \vartheta_{01}(\tau, z)
\end{array} \right.
\label{eqn:2023-103d}
\end{equation}
\end{subequations}
and 
\begin{equation}
\left\{
\begin{array}{rcr}
\vartheta_{00}(\tau,z) \, \vartheta_{01}(\tau,z)
&=&
\dfrac{\eta(\tau)^2}{\eta(2\tau)} \, \vartheta_{01}(2\tau,2z)
\\[4mm]
\vartheta_{10}(\tau,z) \, \vartheta_{11}(\tau,z)
&=&
\dfrac{\eta(\tau)^2}{\eta(2\tau)} \, \vartheta_{11}(2\tau,2z)
\end{array}\right.
\label{eqn:2023-103b}
\end{equation}
we obtain the formulas in 2).

\medskip

\noindent
3) is obtained by calculation using 2) and the product expression of  
$\vartheta_{ab}(\tau,z)$:
\begin{subequations}
{\allowdisplaybreaks
\begin{eqnarray}
\vartheta_{00}(\tau, z) &=& 
\prod_{n=1}^{\infty} \, 
(1-q^n)(1+e^{2\pi iz}q^{n-\frac12})(1+e^{-2\pi iz}q^{n-\frac12})
\label{n4:eqn:2023-108b1}
\\[0mm]
\vartheta_{01}(\tau, z) &=&
\prod_{n=1}^{\infty} \, 
(1-q^n)(1-e^{2\pi iz}q^{n-\frac12})(1-e^{-2\pi iz}q^{n-\frac12})
\label{n4:eqn:2023-108b2}
\\[0mm]
\vartheta_{10}(\tau, z) &=&
e^{\frac{\pi i \tau}{4}} \, e^{\pi iz} \, 
\prod_{n=1}^{\infty} \, (1-q^n)(1+e^{2\pi iz}q^n)(1+e^{-2\pi iz}q^{n-1})
\nonumber
\\[0mm]
&=&
e^{\frac{\pi i \tau}{4}} \, e^{-\pi iz} \, 
\prod_{n=1}^{\infty} \, (1-q^n)(1+e^{2\pi iz}q^{n-1})(1+e^{-2\pi iz}q^n)
\label{n4:eqn:2023-108b3}
\\[0mm]
\vartheta_{11}(\tau, z) &=&
e^{\frac{\pi i \tau}{4}} \, e^{\pi i(z+\frac12)} \, 
\prod_{n=1}^{\infty} \, (1-q^n)(1-e^{2\pi iz}q^n)(1-e^{-2\pi iz}q^{n-1})
\nonumber
\\[0mm]
&=&
e^{\frac{\pi i \tau}{4}} \, e^{-\pi i(z+\frac12)} \, 
\prod_{n=1}^{\infty} \, (1-q^n)(1-e^{2\pi iz}q^{n-1})(1-e^{-2\pi iz}q^n)
\label{n4:eqn:2023-108b4}
\end{eqnarray}}
\end{subequations}

\vspace{-6mm}

\end{proof}

%(line=4261) %%
%(label=n4:prop:2023-106b) %%
\begin{prop} \,\ 
\label{n4:prop:2023-106b}
Let $j \in I^{[2]R} =\big\{0,1\big\}$. Then 
\begin{enumerate}
\item[{\rm 1)}] $c^{[M=2]}=-3$ \,\ and \,\ 
$\left\{
\begin{array}{ccc}
h^{[M=2,j=0]R} &=&-\frac18 \\[1mm]
s^{[M=2,j=0]R} &=&0
\end{array}\right.$ \,\ and \,\ $\left\{
\begin{array}{ccc}
h^{[M=2,j=1]R} &=&\frac38 \\[1mm]
s^{[M=2,j=1]R} &=&1
\end{array}\right.$
\item[{\rm 2)}] the characters are as follows:
\begin{enumerate}
\item[${\rm (i)}^+$] $
{\rm ch}_{V^{[M=2,j=0]R}}^{(+)}(\tau,z) = 
\bigg[\dfrac{\eta(2\tau)}{\eta(\tau)}\bigg]^3 \dfrac{
\vartheta_{00}(2\tau, z) \, 
\vartheta_{10}(\tau,z)
}{
\vartheta_{10}(2\tau, z) \, 
\vartheta_{01}(2\tau,2z)}$
\item[${\rm (i)}^-$] $
{\rm ch}_{V^{[M=2,j=0]R}}^{(-)}(\tau,z) = 
\bigg[\dfrac{\eta(2\tau)}{\eta(\tau)}\bigg]^3 \dfrac{
\vartheta_{01}(2\tau, z) \, 
\vartheta_{11}(\tau,z)
}{
\vartheta_{11}(2\tau, z) \, 
\vartheta_{01}(2\tau,2z)}$
\item[${\rm (ii)}^+$] $
{\rm ch}_{V^{[M=2,j=1]R}}^{(+)}(\tau,z) = 
\bigg[\dfrac{\eta(2\tau)}{\eta(\tau)}\bigg]^3 \dfrac{
\vartheta_{10}(2\tau, z) \, 
\vartheta_{10}(\tau,z)
}{
\vartheta_{00}(2\tau, z) \, 
\vartheta_{01}(2\tau,2z)}$
\item[${\rm (ii)}^-$] $
{\rm ch}_{V^{[M=2,j=1]R}}^{(-)}(\tau,z) = - \, 
\bigg[\dfrac{\eta(2\tau)}{\eta(\tau)}\bigg]^3 \dfrac{
\vartheta_{11}(2\tau, z) \, 
\vartheta_{11}(\tau,z)
}{
\vartheta_{01}(2\tau, z) \, 
\vartheta_{01}(2\tau,2z)}$
\end{enumerate}
\item[{\rm 3)}] the leading terms are as follows:
\begin{enumerate}
\item[{\rm (i)}] the leading term of 
${\rm ch}_{V^{[M=2,j=0]R}}^{(\pm)}(\tau,z)
= \, 1 \, = \,\ 
q^{-\frac{1}{24}c^{[2]}+h^{[2, 0]R}} \, e^{2\pi is^{[2, 0]R}z}$

\item[{\rm (ii)}] the leading term of 
${\rm ch}_{V^{[M=2,j=1]R}}^{(\pm)}(\tau,z)
\, = \,\  
q^{\frac12} \, e^{2\pi iz} (1 \pm e^{-2\pi iz})^2$
$$
= \,\ q^{-\frac{1}{24}c^{[2]}+h^{[2, 1]R}} \, 
e^{2\pi is^{[2, 1]R}z} \, (1 \pm e^{-2\pi iz})^2
$$
\end{enumerate}
\end{enumerate}
\end{prop}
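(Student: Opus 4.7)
The plan is to follow the template of Proposition \ref{n4:prop:2023-106a}: part (1) by direct substitution, part (2) by specializing Theorem \ref{n4:thm:2022-1203a} and invoking theta-function identities, and part (3) by computing leading terms from product expansions.

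For part (1), I substitute $M=2$ and $j \in \{0,1\}$ into the formulas \eqref{n4:eqn:2023-106b} of Theorem \ref{n4:thm:2022-1203a}. This immediately yields $c^{[2]R}=-3$, and $(h^{[2,0]R},s^{[2,0]R}) = (-\tfrac{1}{8},0)$, $(h^{[2,1]R},s^{[2,1]R})=(\tfrac{3}{8},1)$.

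For part (2), I start from the Ramond character formulas in Theorem \ref{n4:thm:2022-1203a} part 2, specialized to $M=2$ and $j \in \{0,1\}$. The case $j=0$ has a trivial $q$- and $z$-prefactor, with the convention ${\rm sgn}(0)=-1$ cancelling the outer minus sign in ${\rm ch}^{(+)}$. For the case $j=1$ the arguments $\vartheta_{ab}(2\tau, z+\tau)$ must be rewritten via the quasi-periodicity relations $\vartheta_{ab}(2\tau, z+\tau) = \pm\, q^{-1/2} e^{-\pi iz}\vartheta_{a,1-b}(2\tau, z)$, and the accumulated $q^{-1/2}$ and $e^{-\pi iz}$ factors combine with the outer $q^{1/2}e^{2\pi iz}$ to the clean form claimed. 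In both cases, I then simplify the resulting ratio of six theta functions using the two doubling identities recorded in \eqref{eqn:2023-103b}, together with the companion identities
\begin{equation*}
\vartheta_{00}(2\tau,z)\,\vartheta_{10}(2\tau,z)=\frac{\eta(2\tau)^2}{\eta(\tau)}\,\vartheta_{10}(\tau,z),\qquad
\vartheta_{01}(2\tau,z)\,\vartheta_{11}(2\tau,z)=\frac{\eta(2\tau)^2}{\eta(\tau)}\,\vartheta_{11}(\tau,z),
\end{equation*}
which follow from \eqref{eqn:2023-103c}--\eqref{eqn:2023-103d} by the substitution $z \mapsto z-\tau/2$ together with the standard half-period formulas \eqref{n4:eqn:2023-107a} for $\vartheta_{ab}(\tau, z\pm\tau/2)$. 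The four identities applied in succession convert the original ratio to $[\eta(2\tau)/\eta(\tau)]^3$ times the targeted quotient, in which the factor $\vartheta_{01}(2\tau,2z)$ in the denominator arises from the first identity of \eqref{eqn:2023-103b}.

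For part (3), I substitute the product expansions \eqref{n4:eqn:2023-108b1}--\eqref{n4:eqn:2023-108b4} into the formulas from part (2) and extract the lowest-degree contribution in $q$. For $j=0$ all the $q$-dependent factors contribute $1$ to leading order and the four theta ratios collapse to $1$, matching $q^{-c^{[2]}/24+h^{[2,0]R}}e^{2\pi is^{[2,0]R}z}=1$. For $j=1$ the leading term is $q^{1/2}e^{2\pi iz}(1\pm e^{-2\pi iz})^2$, the sign being $+$ for ${\rm ch}^{(+)}$ and $-$ for ${\rm ch}^{(-)}$; these agree with $q^{-c^{[2]}/24+h^{[2,1]R}}e^{2\pi is^{[2,1]R}z}\cdot(1\pm e^{-2\pi iz})^2$. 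The main technical obstacle throughout is the careful bookkeeping of signs, $q$-powers and $e^{\pi iz}$ factors in part (2), particularly in deriving the two companion doubling identities and in handling the $j=1$ quasi-periodic shift; once those are in place the entire simplification is a direct chain of substitutions.
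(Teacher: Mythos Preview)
Your proposal is correct and follows essentially the same route as the paper's proof: specialize Theorem \ref{n4:thm:2022-1203a} to $M=2$, apply the half-period shifts \eqref{n4:eqn:2023-108a} for $j=1$, and simplify via the doubling identities \eqref{eqn:2023-103a} and \eqref{eqn:2023-103b} (your ``companion identities'' are exactly \eqref{eqn:2023-103a}, which the paper simply quotes rather than derives). One minor slip to fix in execution: the individual quasi-periodicity factor in \eqref{n4:eqn:2023-108a} is $q^{-1/4}$ with prefactor $1$ or $-i$, not $\pm q^{-1/2}$; the \emph{accumulated} factor on the ratio $\vartheta_{00}\vartheta_{01}\vartheta_{11}/\vartheta_{10}$ is $-q^{-1/2}e^{-2\pi iz}$, which then cancels cleanly against the outer $-q^{1/2}e^{2\pi iz}$.
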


\begin{proof} By Theorem \ref{n4:thm:2022-1203a}, we have 
\begin{subequations}
%{\allowdisplaybreaks 
\begin{eqnarray}
& & \hspace{-5mm}
{\rm ch}_{V^{[M=2,j=0]R}}^{(+)}(\tau,z)
\, = \,\  
\frac{
\vartheta_{00}(2\tau, \, z)
\vartheta_{01}(2\tau, \, z)
\vartheta_{11}(2\tau, \, z)
}{\vartheta_{10}(2\tau, \, z)}
\cdot \frac{\vartheta_{10}(\tau,z)}{
\vartheta_{00}(\tau,z)\vartheta_{01}(\tau,z)\vartheta_{11}(\tau,z)}
\nonumber
\\[1mm]
& &
\label{eqn:2023-102c1}
\\[-2mm]
& & \hspace{-5mm}
{\rm ch}_{V^{[M=2,j=0]R}}^{(-)}(\tau,z)
\, = \,\  
\frac{
\vartheta_{00}(2\tau, \, z)
\vartheta_{01}(2\tau, \, z)
\vartheta_{10}(2\tau, \, z)
}{\vartheta_{11}(2\tau, \, z)}
\cdot \frac{\vartheta_{11}(\tau,z)}{
\vartheta_{00}(\tau,z)\vartheta_{01}(\tau,z)\vartheta_{10}(\tau,z)}
\nonumber
\\[1mm]
& &
\label{eqn:2023-102c2}
\end{eqnarray}
% }
\end{subequations}
and
\begin{subequations}
{\allowdisplaybreaks 
\begin{eqnarray}
& & \hspace{-5mm}
{\rm ch}_{V^{[M=2,j=1]R}}^{(+)}(\tau,z)
\,\ = \,\ - \, 
q^{\frac12} e^{2\pi iz}
\nonumber
\\[2mm]
& & \times \,\ 
\frac{
\vartheta_{00}(2\tau, \, z+\tau)
\vartheta_{01}(2\tau, \, z+\tau)
\vartheta_{11}(2\tau, \, z+\tau)
}{\vartheta_{10}(2\tau, \, z+\tau)}
\cdot \frac{\vartheta_{10}(\tau,z)}{
\vartheta_{00}(\tau,z)\vartheta_{01}(\tau,z)\vartheta_{11}(\tau,z)}
\label{eqn:2023-102d1}
\\[3mm]
& & \hspace{-5mm}
{\rm ch}_{V^{[M=2,j=1]R}}^{(-)}(\tau,z)
\,\ = \,\ - \, 
q^{\frac12} e^{2\pi iz}
\nonumber
\\[2mm]
& & \times \,\ 
\frac{
\vartheta_{00}(2\tau, \, z+\tau)
\vartheta_{01}(2\tau, \, z+\tau)
\vartheta_{10}(2\tau, \, z+\tau)
}{\vartheta_{11}(2\tau, \, z+\tau)}
\cdot \frac{\vartheta_{11}(\tau,z)}{
\vartheta_{00}(\tau,z)\vartheta_{01}(\tau,z)\vartheta_{10}(\tau,z)}
\label{eqn:2023-102d2}
\end{eqnarray}}
\end{subequations}
Rewriting the above equations by using 
\begin{equation}
\left\{
\begin{array}{rcr}
\vartheta_{00}(2\tau,z) \, \vartheta_{10}(2\tau,z)
&=&
\dfrac{\eta(2\tau)^2}{\eta(\tau)} \, \vartheta_{10}(\tau,z)
\\[4mm]
\vartheta_{01}(2\tau,z) \, \vartheta_{11}(2\tau,z)
&=&
\dfrac{\eta(2\tau)^2}{\eta(\tau)} \, \vartheta_{11}(\tau,z)
\end{array}\right.
\label{eqn:2023-103a}
\end{equation}
and
\begin{equation}
\left\{
\begin{array}{lcr}
\vartheta_{00}(2\tau, z+\tau) &=&
q^{-\frac14} \, e^{-\pi iz} \, \vartheta_{10}(2\tau, z) \\[1mm]
\vartheta_{01}(2\tau, z+\tau) &=& - \, i \, 
q^{-\frac14} \, e^{-\pi iz} \, \vartheta_{11}(2\tau, z) \\[1mm]
\vartheta_{10}(2\tau, z+\tau) &=&
q^{-\frac14} \, e^{-\pi iz} \, \vartheta_{00}(2\tau, z) \\[1mm]
\vartheta_{11}(2\tau, z+\tau) &=& - \, i \, 
q^{-\frac14} \, e^{-\pi iz} \, \vartheta_{01}(2\tau, z) 
\end{array}\right.
\label{n4:eqn:2023-108a}
\end{equation}
and \eqref{eqn:2023-103b}, we obtain the formulas in 2).

\medskip

\noindent
3) is obtained by calculation using 2) and 
\eqref{n4:eqn:2023-108b1} $\sim$ \eqref{n4:eqn:2023-108b4}.
\end{proof}

\section{$SL_2(\zzz)$-invariance of the subspace of characters}
\label{sec:SL(2Z)-invariance}
%(label=sec:SL(2Z)-invariance)
%(line=4478) %%

In this section we consider the characters of non-irreducible 
N=4 modules in the case $m=1$. Applying the results in section
\ref{sec:non-irred} to the case $m=1$, we obtain the following:

\medskip

%(line=4486) %%
%(label=n4:lemma:2023-103a) %%
\begin{lemma} 
\label{n4:lemma:2023-103a}
The numerators of these N=4 modules 
$\ddot{H}(\Lambda^{(M)[K(1),0] (\heartsuit)}_{k_1,k_2})$ are given as follows:
\begin{enumerate}
\item[{\rm 1)}]
\begin{enumerate}
\item[{\rm (i)}] $\big[\overset{N=4}{R}{}^{(+)} \cdot 
{\rm ch}^{(+)}_{\ddot{H}(\Lambda^{(M)[K(1), 0]{\rm (I)}}_{k_1,k_2})}
\big](\tau,z)
\,\ = \,\ 
\Psi^{[M,1,0; \frac12]}_{k_1+\frac12, \, M-(k_1+k_2+\frac12); \, \frac12}
(\tau, z, z,0)$
% 1) (iii)
\item[{\rm (ii)}] $\big[\overset{N=4}{R}{}^{(+)} \cdot 
{\rm ch}^{(+)}_{\ddot{H}(\Lambda^{(M)[K(1), 0]{\rm (III)}}_{k_1,k_2})}\big](\tau,z)
\,\ = \,\ - \, 
\Psi^{[M,1,0; \frac12]}_{M-(k_1+\frac12), \, k_1+k_2+\frac12; \, \frac12}
(\tau, z, z,0)$
\end{enumerate}
% 2)
\item[{\rm 2)}]
\begin{enumerate}
\item[{\rm (i)}] $\big[\overset{N=4}{R}{}^{(-)} \cdot 
{\rm ch}^{(-)}_{\ddot{H}(\Lambda^{(M)[K(1), 0]{\rm (I)}}_{k_1,k_2})}\big](\tau,z)
\,\ = \,\ 
- \, \Psi^{[M,1,0; 0]}_{k_1+\frac12, \, M-(k_1+k_2+\frac12); \, \frac12}
(\tau, z, z,0)$
% 2) (iii)
\item[{\rm (ii)}] $\big[\overset{N=4}{R}{}^{(-)} \cdot 
{\rm ch}^{(-)}_{\ddot{H}(\Lambda^{(M)[K(1), 0]{\rm (III)}}_{k_1,k_2})}\big](\tau,z)
\,\ = \,\ 
\Psi^{[M,1,0; 0]}_{M-(k_1+\frac12), \, k_1+k_2+\frac12; \, \frac12}
(\tau, z, z,0)$
\end{enumerate}
\item[$1)^{\rm tw}$]
\begin{enumerate}
\item[{\rm (i)}] $\big[\overset{N=4}{R}{}^{(+){\rm tw}} \cdot 
{\rm ch}^{(+){\rm tw}}_{\ddot{H}(\Lambda^{(M)[K(1), 0]{\rm (I)}}_{k_1,k_2})}
\big](\tau,z)
\,\ = \,\ - \, 
\Psi^{[M,1,0; \frac12]}_{M-k_1, \, k_1+k_2+1; \, 0}(\tau, z, z,0)$
% 1) (iii)
\item[{\rm (ii)}] $\big[\overset{N=4}{R}{}^{(+){\rm tw}} \cdot 
{\rm ch}^{(+){\rm tw}}_{\ddot{H}(\Lambda^{(M)[K(1), 0]{\rm (III)}}_{k_1,k_2})}\big](\tau,z)
\,\ = \,\ 
\Psi^{[M,1,0; \frac12]}_{k_1+1, \, M-(k_1+k_2); \, 0}(\tau, z, z,0)$
\end{enumerate}
% 2)
\item[$2)^{\rm tw}$]
\begin{enumerate}
\item[{\rm (i)}] $\big[\overset{N=4}{R}{}^{(-){\rm tw}} \cdot 
{\rm ch}^{(-){\rm tw}}_{\ddot{H}(\Lambda^{(M)[K(1), 0]{\rm (I)}}_{k_1,k_2})}\big](\tau,z)
\,\ = \,\ 
- \, \Psi^{[M,1,0; 0]}_{M-k_1, \, k_1+k_2+1; \, 0}(\tau, z, z,0)$
% 2) (iii)
\item[{\rm (ii)}] $\big[\overset{N=4}{R}{}^{(-){\rm tw}} \cdot 
{\rm ch}^{(-){\rm tw}}_{\ddot{H}(\Lambda^{(M)[K(1), 0]{\rm (III)}}_{k_1,k_2})}\big](\tau,z)
\,\ = \,\ 
\Psi^{[M,1,0; 0]}_{k_1+1, \, M-(k_1+k_2); \, 0}(\tau, z, z,0)$
\end{enumerate}
\end{enumerate}
\end{lemma}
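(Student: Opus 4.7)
The plan is to specialize the general formulas of Lemmas \ref{n4:lemma:2023-102a} and \ref{n4:lemma:2023-102b} to the parameters $m=1$, $m_2=0$, and then use two elementary rewrites to land on the form asserted in the statement.

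First, I set $m=1$ and $m_2=0$ in each of the eight formulas of Lemmas \ref{n4:lemma:2023-102a} and \ref{n4:lemma:2023-102b}. This yields expressions of the shape $\pm \Psi^{[M,1,1;\varepsilon]}_{j,k;\varepsilon'}(\tau, \pm z, \pm z, 0)$. At this point I invoke \eqref{n4:eqn:2022-1208a}, which shows that $\Psi^{[M,1;s;\varepsilon]}_{j,k;\varepsilon'}$ is the same explicit theta-function expression for every $s \in \zzz$; in particular $\Psi^{[M,1,1;\varepsilon]}_{j,k;\varepsilon'} = \Psi^{[M,1,0;\varepsilon]}_{j,k;\varepsilon'}$, so the superscript $s$ drops to $0$ in every formula.

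The second (and only nontrivial) step is to rewrite the second index. The non-twisted formulas carry index $-(k_1+k_2+\frac12)$ and the twisted formulas carry indices $-(k_1+k_2+1)$ or $-(k_1+k_2)$, whereas the statement asks for these to appear as $M-(k_1+k_2+\frac12)$, $k_1+k_2+\frac12$, $k_1+k_2+1$, or $M-(k_1+k_2)$ respectively. To pass between these, I apply the periodicity identity Lemma \ref{lemma:2022-1021a} 1) with $a=0$, $b=\mp 1$, which shifts the second index by $\mp M$ at the cost of a factor $e^{\mp 2\pi i\varepsilon}$. For $\varepsilon=\tfrac12$ this factor is $-1$ and for $\varepsilon=0$ this factor is $+1$; combining with the sign already present from $(-1)^{m_2+1}=-1$ and with the reflection identities Lemma \ref{lemma:2022-1021a} 2), 4) that convert $(\tau,-z,-z,0)$ into $(\tau,z,z,0)$ (with appropriate index negation), I obtain exactly the twelve signed formulas listed. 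Here the cases $\heartsuit=$ II, IV of Lemmas \ref{n4:lemma:2023-102a}, \ref{n4:lemma:2023-102b} are absorbed into I and III via Proposition \ref{n4:prop:2023-102a}, explaining why the statement lists only $\heartsuit=$ I, III.

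Nothing beyond bookkeeping is needed; the main hazard is keeping track of the four sign sources simultaneously (the prefactor $(-1)^{m_2+1}$, the periodicity factor $e^{\pm 2\pi i\varepsilon}$ from shifting by $M$, the reflection factor $-1$ in Lemma \ref{lemma:2022-1021a} 2), 4), and whether the argument is $(\tau,z,z,0)$ or $(\tau,-z,-z,0)$). I would handle this by treating each of the eight formulas in turn, writing the shift explicitly, and checking the sign—this is entirely mechanical once the dictionary is in place, and is modeled on the analogous (but simpler, because $k_2=M-1-2k_1$) specialization already carried out in Lemma \ref{lemma:2022-1227a}.
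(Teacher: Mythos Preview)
Your proposal is correct and follows essentially the same route as the paper: specialize Lemmas \ref{n4:lemma:2023-102a} and \ref{n4:lemma:2023-102b} to $(m,m_2)=(1,0)$, use \eqref{n4:eqn:2022-1208a} to replace $\Psi^{[M,1,1;\varepsilon]}$ by $\Psi^{[M,1,0;\varepsilon]}$, and then apply the periodicity and reflection identities of Lemma \ref{lemma:2022-1021a} to rewrite the indices. The paper's proof is a terse three-line remark to this effect; your version simply spells out which parts of Lemma \ref{lemma:2022-1021a} are invoked and where the signs come from, and the aside about Proposition \ref{n4:prop:2023-102a} is harmless context rather than a logical ingredient.
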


\begin{proof}
These formulas are obtained easily by computing the formulas in 
Lemmas \ref{n4:lemma:2023-102a} and \ref{n4:lemma:2023-102b}
in the case $(m,m_2)=(1,0)$, noticing that
$\Psi^{[M,1,1; \varepsilon]}_{j,k; \varepsilon'}
= \Psi^{[M,1,0;\varepsilon]}_{j,k; \varepsilon'}$ and using 
Lemma \ref{lemma:2022-1021a}.
\end{proof}

\medskip

%(line=4560) %%
%(label=n4:thm:2023-103a) %%
\begin{thm}
\label{n4:thm:2023-103a}
Let $M \in \nnn$, then
\begin{enumerate}
\item[{\rm 1)}] the $\ccc$-linear span of 
$$
\bigcup\limits_{\heartsuit = {\rm I}, \, {\rm III}}
\Big\{{\rm ch}^{(\pm)}_{\ddot{H}(\Lambda^{(M)[K(1), 0] (\heartsuit)}_{k_1,k_2})}
(\tau,z),
\,\  
{\rm ch}^{(+) {\rm tw}}_{\ddot{H}(\Lambda^{(M)[K(1), 0] (\heartsuit)}_{k_1,k_2})}
(\tau,z)
\,\ ; \,\ 
(k_1, k_2) \, \text{satisfies \eqref{n4:eqn:2023-101a}}\Big\}
$$
is $SL_2(\zzz)$-invariant,

\item[{\rm 2)}] the $\ccc$-linear span of 
$$
\bigcup\limits_{\heartsuit = {\rm I}, \, {\rm III}}
\Big\{
{\rm ch}^{(-) {\rm tw}}_{\ddot{H}(\Lambda^{(M)[K(1), 0] (\heartsuit)}_{k_1,k_2})}
(\tau,z)
\,\ ; \,\ 
(k_1, k_2) \, \text{satisfies \eqref{n4:eqn:2023-101a}}\Big\}
$$
is $SL_2(\zzz)$-invariant.
\end{enumerate}
\end{thm}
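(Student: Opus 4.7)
The plan is to reduce $SL_2(\zzz)$-invariance to the known modular transformation laws for $\Psi^{[M,1,0;\varepsilon]}_{j,k;\varepsilon'}$ in Lemma~\ref{n4:lemma:2022-1207a} and for the four N=4 denominators $\overset{N=4}{R}{}^{(\varepsilon)}_{\varepsilon'}$ in \eqref{n4:eqn:2022-1210c1}--\eqref{n4:eqn:2022-1210c2}. By Lemma~\ref{n4:lemma:2023-103a}, every character appearing in either span is, up to an explicit sign, a ratio
$$
\Psi^{[M,1,0;\varepsilon]}_{j,k;\varepsilon'}(\tau,z,z,0)\,\big/\,\overset{N=4}{R}{}^{(\varepsilon)}_{\varepsilon'}(\tau,z),
$$
the four character types corresponding bijectively to $(\varepsilon,\varepsilon')\in\{0,\tfrac12\}^2$ via $(+)\leftrightarrow(\tfrac12,\tfrac12)$, $(-)\leftrightarrow(0,\tfrac12)$, $(+){\rm tw}\leftrightarrow(\tfrac12,0)$, $(-){\rm tw}\leftrightarrow(0,0)$. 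Thus Part~1) is the span of such ratios with $(\varepsilon,\varepsilon')\in\{0,\tfrac12\}^2\setminus\{(0,0)\}$, and Part~2) is the span with $(\varepsilon,\varepsilon')=(0,0)$; it suffices to check invariance under $T$ and $S$.

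The $T$-step is immediate: Lemma~\ref{n4:lemma:2022-1207a}~2) and \eqref{n4:eqn:2022-1210c2} give that under $\tau\mapsto\tau+1$ the ratio becomes a scalar times the ratio with the same $(j,k)$ and shifted parity pair $(\varepsilon+\varepsilon',\varepsilon')$. The involution $(\varepsilon,\varepsilon')\mapsto(\varepsilon+\varepsilon',\varepsilon')$ swaps $(\tfrac12,\tfrac12)\leftrightarrow(0,\tfrac12)$ and fixes $(\tfrac12,0)$ and $(0,0)$, so both index sets are preserved and both spans are $T$-stable.

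The $S$-step combines Lemma~\ref{n4:lemma:2022-1207a}~1) (at $z_1=z_2=z$) with \eqref{n4:eqn:2022-1210c1}. Under $\tau\mapsto-1/\tau,\,z\mapsto z/\tau$, the numerator picks up $\frac{\tau}{M}e^{\frac{2\pi iz^2}{M\tau}}$ and expands into a finite sum over $(a,b)\in(\varepsilon+\zzz/M\zzz)^2$ of $\Psi^{[M,1,0;\varepsilon']}_{a,b;\varepsilon}$-terms with the swapped parity pair, while the denominator picks up $(-1)^{4\varepsilon\varepsilon'}\tau e^{\frac{2\pi iz^2}{\tau}}$ and becomes $\overset{N=4}{R}{}^{(\varepsilon')}_{\varepsilon}$. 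The quotient therefore transforms to the uniform multiplier $\frac{(-1)^{4\varepsilon\varepsilon'}}{M}e^{-\frac{2\pi i(M-1)z^2}{M\tau}}$ --- the standard Jacobi-form anomaly factor for the common central charge $c^{[M]}=6(1-M)/M$ --- times a finite $\ccc$-linear combination of ratios with the swapped parity pair $(\varepsilon',\varepsilon)$. The swap permutes Part~1)'s index set and fixes $(0,0)$, so each span is $S$-closed modulo this universal anomaly.

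The main obstacle is the final combinatorial matching: one must show that every summand $\Psi^{[M,1,0;\varepsilon']}_{a,b;\varepsilon}/\overset{N=4}{R}{}^{(\varepsilon')}_{\varepsilon}$ in the $S$-sum equals, up to sign, a character actually in the span, i.e.\ corresponds to some $\heartsuit\in\{{\rm I},{\rm III}\}$ and $(k_1,k_2)$ satisfying \eqref{n4:eqn:2023-101a} via the index formulas of Lemma~\ref{n4:lemma:2023-103a}. This reduces to three bookkeeping tasks: (i) using the quasi-periodicity of Lemma~\ref{lemma:2022-1021a}~1) to reduce $(a,b)$ to a fundamental domain mod $M$; (ii) applying the sign and reflection symmetries in Lemma~\ref{lemma:2022-1021a}~2)--4) (in particular $z\to-z$) to pass between the $\heartsuit={\rm I}$ parametrization $(k_1+\tfrac12,\,M-(k_1+k_2+\tfrac12))$ and the $\heartsuit={\rm III}$ parametrization $(M-(k_1+\tfrac12),\,k_1+k_2+\tfrac12)$; and (iii) verifying that as $(\heartsuit,k_1,k_2)$ varies the resulting $(j,k)$-pairs, together with those symmetries, exhaust $(\varepsilon+\zzz/M\zzz)^2$. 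This matching is elementary but notationally heavy, and is the crux of the argument.
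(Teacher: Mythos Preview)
Your proposal is correct and takes essentially the same approach as the paper. Where you flag the combinatorial matching as the crux but leave it as bookkeeping, the paper carries it out explicitly: for each $\heartsuit\in\{{\rm I},{\rm III}\}$ and each (non-)twisted type it writes down the index pair $(j_1,j_2)$ arising from Lemma~\ref{n4:lemma:2023-103a}, computes the inequalities on $(j_1,j_2)$ coming from \eqref{n4:eqn:2023-101a}, and then observes that, modulo the swap symmetry $(j_1,j_2)\sim(j_2,j_1)$ of Lemma~\ref{lemma:2022-1021a}~3), the I and III ranges together tile the full fundamental domain $\{0<j_1,j_2<M\}$ (half-integer case) respectively $\{0<j_1,j_2\le M\}$ (integer case).
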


\begin{proof} In view of Lemma \ref{n4:lemma:2023-103a},
we define the parameters $(j_1,j_2)$ and compute the range of 
$(j_1,j_2)$ by using \eqref{n4:eqn:2023-101a} as follows:
\begin{enumerate}
\item[{\rm 1)}] \,\ In the non-twisted case;
\begin{enumerate}
\item[{\rm (I)}] for $\Pi^{(M),{\rm (I)}}_{k_1,k_2}$, we put 
$\left\{
\begin{array}{ccl}
j_1 &:=& k_1+\frac12 \\[2mm]
j_2 &:=& M-(k_1+k_2+\frac12)
\end{array} \right. , $ then $\left\{
\begin{array}{ccl}
j_1 &\geq & \frac12 \\[1mm]
j_1+j_2 &\leq & M \\[1mm]
j_2 &\geq &j_1
\end{array} \right. $
\item[{\rm (III)}] for $\Pi^{(M),{\rm (III)}}_{k_1,k_2}$, we put 
$\left\{
\begin{array}{ccl}
j_1 &:=& M-(k_1+\frac12) \\[2mm]
j_2 &:=& k_1+k_2+\frac12 
\end{array} \right. , $ then $\left\{
\begin{array}{ccl}
j_1 &\leq & M-\frac12 \\[1mm]
j_1+j_2 &\geq & M+1 \\[1mm]
j_2 &\leq & j_1
\end{array} \right. $
\end{enumerate}
\item[{\rm 2)}] \,\ In the twisted case;
\begin{enumerate}
\item[$({\rm I})^{\rm tw}$] for $\Pi^{(M),{\rm (I)}}_{k_1,k_2}$, we put 
$\left\{
\begin{array}{ccl}
j_1 &:=& M-k_1 \\[2mm]
j_2 &:=& k_1+k_2+1
\end{array} \right. , $ then $\left\{
\begin{array}{ccl}
j_1 &\leq & M \\[1mm]
j_1+j_2 &\geq & M+1 \\[1mm]
j_2 &\leq &j_1
\end{array} \right. $
\item[$({\rm III})^{\rm tw}$] for $\Pi^{(M),{\rm (III)}}_{k_1,k_2}$, we put 
$\left\{
\begin{array}{ccl}
j_1 &:=& k_1+1 \\[2mm]
j_2 &:=& M-(k_1+k_2)
\end{array} \right. , $ then $\left\{
\begin{array}{ccl}
j_1 &\geq & 1 \\[1mm]
j_1+j_2 &\leq & M \\[1mm]
j_2 &\geq &j_1
\end{array} \right. $
\end{enumerate}
\end{enumerate}

Then, since
$\Psi^{[M, 1,0; \varepsilon]}_{j_1,j_2;\varepsilon'}(\tau, z.z,0)
=
\Psi^{[M, 1,0; \varepsilon]}_{j_2,j_1;\varepsilon'}(\tau, z.z,0)$
by Lemma \ref{lemma:2022-1021a}, we see that
\begin{enumerate}
\item[{\rm 1)}] \,\ 
$\{(j_1,j_2) \in {\rm (I)}\} \, \cup \, \{(j_1,j_2) \in {\rm (III)}\}$ 
\,\ fills the domain  
$$
\{(j_1,j_2) \, \in \, (\tfrac12\zzz_{\rm odd})^2 \,\ ; \,\ 
0 < j_1, \, j_2 < M\}
\, / \, \sim
$$
\item[{\rm 2)}] \,\ 
$\{(j_1,j_2) \in ({\rm I})^{\rm tw}\} \, \cup \, 
\{(j_1,j_2) \in ({\rm III})^{\rm tw}\}$ \,\  
fills the domain  
$$
\{(j_1,j_2) \, \in \, \zzz^2 \,\ ; \,\ 0 < j_1, \, j_2 \leq M\}
\, / \, \sim
$$
\end{enumerate}
with the equivalence relation \lq \lq \hspace{0.3mm} $\sim$ " 
defined by $(j_1,j_2) \sim (j_2, j_1)$.

Then, by the modular transformation properties of the functions 
$\Psi^{[M,1,0;\varepsilon]}_{j,k;\varepsilon}$
in Lemma \ref{n4:lemma:2022-1207a}
together with the modular transformation formulas
\eqref{n4:eqn:2022-1210c1} and \eqref{n4:eqn:2022-1210c2}
of the N=4 denominators, we obtain the 
$SL_2(\zzz)$-invariance of the space of these characters,
proving Theorem \ref{n4:thm:2023-103a}
\end{proof}

%\section{•â'«}
%\subsection{•â'«1}
%\subsection{•â'«2}


\begin{thebibliography}{99}

\bibitem{K1} V. G. Kac : Infinite-Dimensional Lie Algebras, 3rd edition,
Cambridge University Press, 1990.
%(K1)

\bibitem{KRW} V. G. Kac, S.-S. Roan and M. Wakimoto : 
Quantum reduction for affine superalgebras, Commun. Math. Phys. 
241 (2003), 307-342.
%(KRW)

\bibitem{KW1988} V. G. Kac and M. Wakimoto : 
Modular invariant representations of infinite-dimensional Lie
algebras and superalgebras, Proc. Nat'l Acad. Sci. USA. 85 (1988), 
4956-4960.
%(KW1988)

\bibitem{KW1989} V. G. Kac and M. Wakimoto : 
Classification of modular invariant representations of 
affine algebras, in \lq \lq Infinite-dimensional 
Lie algebras and groups", Advanced series in Math. Phys. 7, 
World Scientific, 1989, 138-177.
%(KW1989)


\bibitem{KW2001} V. G. Kac and M. Wakimoto : 
Integrable highest weight modules over affine superalgebras and 
Appell's function, Commun. Math. Phys. 215 (2001), 631-682.
%(KW2001)

\bibitem{KW2004} V. G. Kac and M. Wakimoto : 
Quantum reduction and representation theory of superconformal algebras,
Advances in Math. 185 (2004), 400-458.
%(KW2004)

\bibitem{KW2005} V. G. Kac and M. Wakimoto : 
Quantum reduction in the twisted case, Progress in Math. 237 
Birkh\"{a}user  (2005), 85-126. math-ph/0404049. 
%(KW2005)

\bibitem{KW2014} V. G. Kac and M. Wakimoto : 
Representations of affine superalgebras and mock theta functions, 
Transformation Groups 19 (2014), 387-455.
arXiv:1308.1261.
%(KW2014)

\bibitem{KW2016a} V. G. Kac and M. Wakimoto :
Representations of affine superalgebras and mock theta functions II, 
Advances in Math. 300 (2016), 17-70. 
arXiv:1402.0727.
%(KW2016a)

\bibitem{KW2016b} V. G. Kac and M. Wakimoto :
Representations of affine superalgebras and mock theta functions III, 
Izv. Math. 80 (2016), 693-750. 
arXiv:1505.01047.
%(KW2016b)

\bibitem{KW2017a} V. G. Kac and M. Wakimoto : A characterization of modified 
mock theta functions, Transformation Groups 22 (2017), 
arXiv:1510.05683.
%(KW2017a)

\bibitem{KW2017b} V. G. Kac and M. Wakimoto : Representation of superconformal 
algebras and mock theta functions, Trudy Moskow Math. Soc. 78 (2017), 64-88.
arXiv:1701.03344.
%(KW2017b)

\bibitem{Mum} D. Mumford : Tata Lectures on Theta I, Progress in Math. 28, 
Birkh\"{a}user Boston, 1983.

\bibitem{W2001b} M. Wakimoto : Lectures on Infinite-Dimensional Lie Algebra, 
World Scientific, 2001.
%(W2001b)

\bibitem{W2022a} M. Wakimoto : Mock theta functions and characters of 
N=3 superconformal modules, arXiv:2202.03098.
%(W2022a) 

\bibitem{W2022b} M. Wakimoto : Mock theta functions and characters of 
N=3 superconformal modules II, arXiv:2204.01473.
%(W2022b) 

\bibitem{W2022d} M. Wakimoto : Mock theta functions and characters of 
N=3 superconformal modules III, arXiv:2207.04644.
%(W2022d) 

\bibitem{W2022e} M. Wakimoto : Mock theta functions and characters of 
N=3 superconformal modules IV, arXiv:2209.00234.
%(W2022e) 

\bibitem{Z} S. Zwegers : Mock theta functions, PhD Thesis, Universiteit 
Utrecht, 2002, arXiv:0807.483.


\end{thebibliography}
\end{document}